\newcommand{\m}[1]{
\ifdefequal{#1}{1}
{\mathbf{#1}} 
{\mathbb{#1}}
}
\newcommand{\q}[1]{\mathcal{#1}}
\renewcommand{\le}{\leqslant}
\renewcommand{\ge}{\geqslant}
\newcommand{\ga}{\gamma}
\newcommand{\Om}{\Omega}
\newcommand{\F}{\Phi}
\newcommand{\R}{\mathbb{R}}
\newcommand{\Z}{\mathbb{Z}}
\newcommand{\M}{\mathcal{M}}
\newcommand{\f}{\varphi}
\newcommand{\loc}{\mathrm{loc}}
\def\RR{\mathbb{R}}
\newcommand{\weak}{\rightharpoonup}
\newcommand{\set}[2]{\left\{ #1 \colon #2 \right\}}
\newcommand{\e}{\varepsilon}
\DeclareMathOperator{\Span}{\textrm{Span}}
\DeclareMathOperator{\Ham}{\textrm{Ham}}
\DeclareMathOperator{\sgn}{\textrm{sgn}}
\newcommand{\be}{\begin{equation}}
\newcommand{\ee}{\end{equation}}
\theoremstyle{plain}
\newtheorem{thm}{Theorem}[section]
\newtheorem*{thm*}{Theorem}
\newtheorem{prop}[thm]{Proposition}
\newtheorem{cor}[thm]{Corollary}
\newtheorem{lem}[thm]{Lemma}
\theoremstyle{definition}
\newtheorem{defi}[thm]{Definition}
\theoremstyle{remark}
\newtheorem{nb}[thm]{Remark}
\theoremstyle{example}
\newtheorem{exa}[thm]{Example}
\newtheorem{claim}[thm]{Claim}
\numberwithin{equation}{section}
\newcommand{\nd}{\noindent}
\newcommand{\ds}{\displaystyle}
\def\paragraph{\@startsection{paragraph}{4}%
  \z@\z@{-\fontdimen2\font}%
  {\normalfont\bfseries}}
\title{Asymptotic stability of precessing domain walls for the Landau-Lifshitz-Gilbert equation in a nanowire with Dzyaloshinskii-Moriya interaction}
\author{Rapha\"el C\^ote \and Radu Ignat}
\subjclass[2010]{35B35 (primary), 35Q60.}
\keywords{domain wall, asymptotic stability, modulation, $\Gamma$-convergence, Landau-Lifshitz-Gilbert equation, ferromagnetism.}
\thanks{The authors acknowledge partial support by the ANR projects MAToS  ANR-14-CE25-0009-01 and MOSICOF ANR-21-CE40-0004. }
\date{\today}
\begin{document} 

\begin{abstract}
We consider a ferromagnetic nanowire and we focus on an asymptotic regime where the  Dzya\-lo\-shin\-skii-Mo\-riya interaction is taken into account. 

First we prove a dimension reduction result via $\Gamma$-convergence that determines a limit functional $E$ defined for maps $m:\R\to \m S^2$ in the direction $e_1$ of the nanowire. The energy functional $E$ is invariant under translations in $e_1$ and rotations about the axis $e_1$. We fully classify the critical points of finite energy $E$ when a transition between $-e_1$ and $e_1$ is imposed; these transition layers are called (static) domain walls. 

The evolution of a domain wall by the Landau-Lifshitz-Gilbert equation associated to $E$ under the effect of an applied magnetic field $h(t)e_1$ depending on the time variable $t$ gives rise to the so-called \emph{precessing domain wall}. Our main result proves the asymptotic stability of precessing domain walls for small $h$ in $L^\infty([0, +\infty))$ and small $H^1(\R)$ perturbations of the static domain wall, up to a gauge which is intrinsic to invariances of the functional $E$.
\end{abstract}

\maketitle

\section{Introduction}

\subsection{A reduced model in a ferromagnetic nanowire with Dzyaloshinskii-Moriya interaction (DMI)}

We consider a nanowire modeled by a straight line $\m Re_1 \subset \m R^3$ where $e_1= (1,0,0)$, $e_2=(0,1,0)$, $e_3= (0,0,1)$ is the canonical basis of $\m R^3$ and
magnetisations 
$m = (m_1, m_2, m_3): \m R \to \m S^2$ of this nanowire, with values into the unit sphere $\m S^2\subset \m R^3$,  to which we associate the energy functional
\begin{align} \label{def:energy}
E(m) = \frac{1}{2} \int_\R |\partial_x m|^2 + 2\ga \partial_x m \cdot (e_1\wedge m)+ (1-m_1^2) \, dx,
\end{align}
where  $x$ is the variable in direction $e_1$ of the nanowire and $\gamma \in \m R$ is a given constant with 
\[ \ga^2<1. \]
Here, $\cdot$ and $\wedge$ are the scalar and cross product in $\m R^3$. As $m\in \m S^2$, we often use $1-m_1^2=m_2^2+m_3^2$. This model is obtained by $\Gamma$-convergence in a special regime for a ferromagnetic nanowire with Dzyaloshinskii-Moriya interaction (see Theorem~\ref{thm:gamma} in Section \ref{sec:model} below). In several places it will be convenient to work with the map $m$ in spherical coordinates $(\varphi, \theta)$ on the sphere $\m S^2$ with respect to the $e_1$ axis, that is
\begin{equation} \label{spheric_coord}
m = \begin{pmatrix} 
\cos \theta \\
\sin \theta \cos \varphi \\
\sin \theta \sin \varphi
\end{pmatrix}.
\end{equation}
Note that a continuous map $m:\R\to \m S^2$ avoiding the poles $\{ \pm e_1 \} = \{ (\pm 1,0,0) \}$ admits continuous spherical coordinates $(\varphi, \theta):\R\to \R^2$ in \eqref{spheric_coord}.

\bigskip

Our first task in this paper is to characterise the minimisers of the energy $E$ when a transition from $-e_1$ to $e_1$ is imposed at $\pm \infty$, which we call \emph{(static) domain walls}. The family of such minimisers is invariant under the following transformations: 
\begin{enumerate}
\item[$\bullet$]  translation in space $\tau_y m(x) = m(x-y)$ for $y\in \R$, and
\item[$\bullet$] rotation $\ds R_\phi = \begin{pmatrix} 
1 & 0 & 0 \\
0 & \cos \phi & -\sin \phi \\
0 & \sin \phi & \cos \phi
\end{pmatrix}$ about the axis $e_1$ and angle $\phi\in \R$.
\end{enumerate}
This is due to the invariance of the energy $E(m)$ under these transformations. 
Thus, we are naturally led to define a gauge (or action of the group)
\[ G=(\m R^2, +) \]
 over magnetisations $m: \m R \to \m S^2$ by setting for $g= (y, \phi) \in G$:
\be
\label{grup}
 g.m := \tau_y R_\phi m=R_\phi \tau_y m. \ee 
 Observe that this action is commutative (i.e.,  $g.(\tilde g.m)=(g+\tilde g).m=\tilde g.(g.m)$ for every $g, \tilde g\in G$), $g.e_1 = e_1$ and for any $g = (y,\phi) \in G$, $m,\tilde m: \m R \to \m S^2$,
\begin{gather} 
(g.m) \wedge (g. \tilde m) = g .(m \wedge \tilde m), \qquad (g.m) \cdot (g. \tilde m) =\tau_y (m \cdot \tilde m), \nonumber \\
\partial_y(g.m) = - g. \partial_x m, \qquad \partial_\phi (g. m) = e_1 \wedge g.m=g.(e_1\wedge m). \label{formula4}
\end{gather}
We refer to Appendix \ref{sec:tool} where we gather a few other useful algebraic identities.

\bigskip

Our second and main concern in this paper is to study the evolution of a magnetisation under the effect of an applied  magnetic field 
\[ H_a=h(t) e_1 \]
in the direction $e_1$ and of intensity $h(t)$ that is a \emph{given} scalar continuous function \emph{depending on the time variable $t$}. As the magnetisation moves, it creates the magnetic field $H$: this effective field $H$ is composed by the gradient $\delta E(m)$ of the energy $E$ together with the applied field $H_a$, i.e.,\begin{align} 
\label{def:H}
H &= H(m) =  -\delta E(m) + H_a\\ 
\nonumber
\textrm{with} \quad \quad \delta E(m)&= -\partial_{xx} m - 2 \gamma e_1 \wedge \partial_x m +m_2 e_2+m_3 e_3.
\end{align}
The magnetisation $m=m(t,x)$ evolves according to the Landau-Lifshitz-Gilbert equation:
\begin{equation} \label{ll} \tag{LLG}
\partial_t m = m \wedge H(m) - \alpha m \wedge (m \wedge H(m)),
\end{equation}
where $\alpha >0$ is a given damping coefficient. 
We are especially interested here in the study of the flow of \eqref{ll} near (static) domains walls. More precisely, the evolution of (static) domain walls by the equation \eqref{ll} gives rise to the so-called \emph{precessing domain walls} and we want to prove the asymptotic stability of these precessing domain walls for the applied magnetic field $H_a=h(t) e_1$ with $h$ small in $L^\infty([0, +\infty))$ and perturbation of the initial data (given by the static domain walls) that is small in $H^1(\R)$.

\subsection{Description of static and precessing domain walls}

Note that every configuration $m:\R\to \m S^2$ of finite energy $E(m)<\infty$ admits limits belonging to $\{\pm e_1\}$ as $x\to \pm \infty$ (see e.g. Lemma \ref{lem:infinit}). In the following, we focus on configurations such that
\[ m(\pm\infty)=(\pm1,0,0), \textrm{ i.e., } \lim_{x\to \pm \infty} m(x)=\pm e_1. \]
Our first result gives a complete classification of (static) domain walls: they actually all derive from two explicit domain walls $w_*^\pm$ (which, in spherical coordinates, corresponds to taking the opposite angle of $\theta_*$) under the gauge $G$ defined in \eqref{grup}. The precise statement is the following.
\begin{thm}[Static domain walls]
\label{thm_DM}
For $\ga^2<1$, every finite energy critical point $m$ of $E$ in \eqref{def:energy} connecting $\pm e_1$, i.e., 
\begin{align} \label{eq:dm_critical}
m \wedge \delta E(m) = 0 \quad \textrm{with} \quad E(m)<\infty \quad \textrm{and} \quad m(\pm\infty)=(\pm1,0,0),
\end{align}
has the form 
\[ m=g.w^\pm_* \quad \textrm{ for some }\quad g= (y,\phi) \in G, \]
where 
$w^\pm_*$ are given in spherical coordinates\footnote{This is unambiguous because $w^\pm_*$ does not touch $\pm e_1$ for any $x \in \m R$. In the canonical basis $(e_1, e_2, e_3)$, \eqref{formul1} can be rewritten as
\[ w^\pm_*=\tanh \bigg(\sqrt{1-\ga^2}x\bigg) e_1\pm \frac1{\cosh(\sqrt{1-\ga^2}x)}\bigg(\cos(\ga x)e_2-\sin(\ga x) e_3\bigg). \]} by $(\varphi_*,\pm \theta_*)$, i.e., 
\be
\label{formul1}
w^\pm_* = \begin{pmatrix}  \cos \theta_* \\ \pm \sin \theta_* \cos \varphi_* \\ \pm \sin \theta_* \sin \varphi_* \end{pmatrix}  \quad
\textrm{with } \, \,   \varphi_*(x)=-\ga x, \, \, \theta_*(x) = 2\arctan (e^{-\sqrt{1-\ga^2}x}), \, \,  x\in \R.
\ee 
\end{thm}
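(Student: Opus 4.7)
My plan is to exploit the translation and rotation invariances of $E$ to derive two conserved quantities, use one of them to force finite-energy critical points to avoid the poles $\pm e_1$ at every finite $x$, and then reduce the remaining system to explicit ODEs for $m_1$ and the phase of $u := m_2 + i m_3$. Viewing
\[
L(m, m') = \tfrac{1}{2}|m'|^2 + \gamma\, m' \cdot (e_1 \wedge m) + \tfrac{1}{2}(1 - m_1^2)
\]
as a Lagrangian density for maps $\R\to \m S^2$, translation invariance in $x$ gives the first integral $H_1 := \tfrac{1}{2}(|m'|^2 - (1 - m_1^2))$ and rotation invariance under $R_\phi$ (infinitesimal generator $e_1 \wedge m$) gives the Noether current $H_2 := m' \cdot (e_1 \wedge m) + \gamma(1 - m_1^2)$; both can be checked to vanish on the critical point equation $m\wedge \delta E(m)=0$ directly. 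Since $m_1^2 \to 1$ at $\pm\infty$ and $m' \in L^2$ (finite energy), the constant value of $H_1$ is forced to be $0$; using $|m'|^2 = 1-m_1^2$ then yields $|m'\cdot(e_1\wedge m)| \leq 1 - m_1^2 \to 0$ at $\pm\infty$, so $H_2 \equiv 0$ as well.

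The next step is to show that $m(x) \neq \pm e_1$ at every finite $x$. Suppose $m(x_0) = e_1$; then $H_1 \equiv 0$ gives $|m'(x_0)|^2 = 1 - m_1(x_0)^2 = 0$, so $m'(x_0) = 0$. The critical point equation is an autonomous second-order ODE with smooth coefficients, and the constant map $\tilde m \equiv e_1$ solves it with the same Cauchy data $(e_1, 0)$ at $x_0$; Cauchy-Lipschitz uniqueness then forces $m \equiv e_1$ on a neighborhood of $x_0$. Thus $\{x \in \R : m(x) = e_1\}$ is both open and closed, and $m(-\infty) = -e_1$ forces it to be empty. The same reasoning rules out $m = -e_1$, so $m_1(x) \in (-1, 1)$ on all of $\R$.

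Having ensured $|u|^2 = 1 - m_1^2 > 0$ everywhere, I combine the two conservation laws. The constraint $m \cdot m' = 0$ reads $\mathrm{Re}(\bar u u') = -m_1 m_1'$, while $H_2 \equiv 0$ reads $\mathrm{Im}(\bar u u') = -\gamma |u|^2$. Taking squared moduli and using $|u'|^2 = (1-m_1^2) - (m_1')^2$ (from $H_1 \equiv 0$) yields after elementary algebra the autonomous scalar ODE
\[
(m_1')^2 = (1 - \gamma^2)(1 - m_1^2)^2.
\]
Since $m_1'$ is non-vanishing on $\R$ and has positive sign by the boundary data, $m_1' = \sqrt{1-\gamma^2}(1 - m_1^2)$, which integrates to $m_1(x) = \tanh(\sqrt{1-\gamma^2}(x - y_0))$ for a unique $y_0 \in \R$. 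Lifting $u = r e^{i\varphi}$ with $r > 0$ and $\varphi$ smooth, the identity $\bar u u' = rr' + ir^2 \varphi'$ combined with $\mathrm{Im}(\bar u u') = -\gamma r^2$ gives $\varphi'(x) = -\gamma$, hence $\varphi(x) = -\gamma x + \phi_0$. Matching the resulting formulas with the footnote expression for $w_*^+$ produces $m = g.w_*^+$ with $g = (y_0, \phi_0 - \gamma y_0) \in G$, and equivalently $m = g'.w_*^-$ via $w_*^- = R_\pi w_*^+$.

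The main obstacle is the step ruling out $m(x_0) = \pm e_1$ at finite $x_0$: without it, the identity $(m_1')^2 = (1-\gamma^2)(1-m_1^2)^2$ would hold only on each connected component of $\{m_1^2 < 1\}$, the lift $u = re^{i\varphi}$ would break at zeros of $u$, and the classification would require a delicate patching across potential crossings. The Cauchy-Lipschitz dichotomy against the constant solutions $\pm e_1$ provides exactly this rigidity, after which the integration is entirely mechanical.
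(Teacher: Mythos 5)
Your proof is correct and takes a genuinely different route from the paper's. The paper first applies the $x$-dependent rotation $M := R_{\gamma x} m$, which absorbs the Dzyaloshinskii--Moriya term and reduces the Euler--Lagrange equation for $M$ to the DMI-free case; it then proves $M$ is planar (via the conserved Wronskian $M_2 \partial_x M_3 - M_3 \partial_x M_2 \equiv 0$ plus Cauchy--Lipschitz for the collinearity factor), and finally classifies planar walls by the zero-level-set structure of the pendulum Hamiltonian. You instead work with $m$ directly and exploit both invariances as Noether conservation laws: the translation charge $H_1 = \tfrac12(|m'|^2 - (1-m_1^2))$ and the rotation charge $H_2 = m'\cdot(e_1\wedge m) + \gamma(1-m_1^2)$, both forced to vanish by the boundary data. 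Note that $H_2$ is, up to the change of frame, the same invariant the paper uses in Step 3 (one checks $M'\cdot(e_1\wedge M) = H_2$), but your use of $H_1 \equiv 0$ on the full $\m S^2$-valued $m$ is different; the paper only invokes the analogous relation after the planar reduction, as the pendulum Hamiltonian. What each buys: the paper's rotation elegantly explains the DMI as an unwindable helical twist and reduces to a classical Bloch-wall classification; your approach gives a shorter algebraic path to the closed ODE $(m_1')^2 = (1-\gamma^2)(1-m_1^2)^2$ and cleanly isolates the pole-avoidance step via Cauchy--Lipschitz against the constant solutions $\pm e_1$. Both arguments rely on Cauchy--Lipschitz uniqueness at a stationary point/pole.

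One small omission: the critical-point condition $m\wedge \delta E(m) = 0$ with $E(m) < \infty$ is a priori only distributional, so to evaluate $H_1$, $H_2$ pointwise and to apply Cauchy--Lipschitz to the second-order ODE you first need to know $m$ is smooth. This follows from a standard elliptic bootstrap, as in Step 1 of the paper's proof, but your writeup takes it for granted. It is worth stating explicitly, since the pole-avoidance argument is the crux of your approach and is meaningless without it.
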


\bigskip

Equivalently, $\theta_*:\R\to (0,\pi)$ defined in the above theorem solves the first order ODE 
\begin{equation}
\label{eq:theta*}
\partial_x \theta_* = -\sqrt{1-\ga^2} \sin \theta_*, \quad \theta_*(-\infty)=\pi, \, \theta_*(+\infty)=0.
\end{equation}
Differentiating \eqref{formul1}, by the above ODE in $\theta_*$, we deduce that $w^\pm_*$ satisfies the system of first order ODEs (see \eqref{345} below):
\[ \partial_x w^\pm_*=\sqrt{1-\ga^2} w^\pm_* \wedge(e_1\wedge w^\pm_*)-\ga e_1\wedge w^\pm_*. \]
The case $\ga=0$ (i.e., absence of DMI) corresponds to (in-plane) static domain walls where a rotation in $\theta_*$ of $180^\circ$ takes place along the nanowire axis $e_1$; these transitions are called Bloch walls (see e.g. \cite{CL06a, KK_these}). We highlight that the novelty of Theorem \ref{thm_DM} consists in treating the more general case $\gamma^2<1$ of the Dzya\-lo\-shin\-skii-Mo\-riya interaction for critical points of finite energy $E$. If $\gamma\neq 0$, next to the rotation in $\theta_*$, the optimal transition layer carries out a rotation in $\f_*$ in the plane $(e_2, e_3)$ transversal to the nanowire axis $e_1$.

\medskip

The evolution of a (static) domain wall under the \eqref{ll} flow for the time-dependent applied magnetic field $H_a=h(t) e_1$ is given by the precessing domain wall:

\begin{cor}[Precessing domain walls] \label{cor:precessing_dm}
Let $\gamma^2<1$, $\alpha\in \R$ and $h\in L^1_{\loc} ([0, +\infty), \m R)$. Define the $W^{1,1}_\loc$ gauge $g_* = (y_*, \phi_*): t\in [0,+\infty) \to G$ by the initial conditions $y_*(0) =0$, $\phi_*(0) =0$ and the derivative $\dot g_* = (\dot y_*, \dot \phi_*)$ (in the $L^1_{\loc}$ sense) given by
\be
\label{eq_y_phi}
\forall t \ge 0, \quad \dot y_*=-\frac{\alpha h(t)}{\sqrt{1-\ga^2}}, \quad \dot \phi_* =\big(-1+\frac{\alpha \ga}{\sqrt{1-\ga^2}}\big)h(t).
\ee
Then the maps $(t,x)\in [0,+\infty) \times \R \mapsto g_*(t).w^\pm_*(x)\in \m S^2$ having spherical coordinates\footnote{The precessing domain wall represents the time-dependent translation of the static wall $w^\pm_*$ whose centre $y_*$ evolves by \eqref{eq_y_phi} that is combined with a time-dependent precession about the nanowire axis $e_1$ encoded in the rotation of angle $\phi_*$ in the plane $(e_2, e_3)$ evolving by \eqref{eq_y_phi}. Thus, this evolution is fundamentally different than the more common travelling waves (e.g., the Walker wall \cite{SW74}).}
\be
\label{preces}
\left( \varphi_*(x-y_*(t))+\phi_*(t),\pm \theta_*(x-y_*(t)) \right),
\ee
with $\varphi_*$ and $\theta_*$ given in \eqref{formul1}
are two solutions to \eqref{ll} with initial data $w^\pm_*$ at $t=0$. These solutions are called precessing domain walls.
\end{cor}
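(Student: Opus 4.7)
The plan is a direct verification. I would set $m(t,x) := g_*(t).w_*^\pm(x)$, compute $\partial_t m$ using \eqref{formula4}, simplify both sides of \eqref{ll}, and show that the equation reduces to the algebraic system \eqref{eq_y_phi}.

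First, the chain rule applied to $m = g_*.w_*^\pm$ together with the infinitesimal action formulas in \eqref{formula4}, namely $\partial_y(g.m) = -g.\partial_x m$ and $\partial_\phi(g.m) = e_1 \wedge (g.m)$, yields (in the $L^1_\loc$ in $t$ sense consistent with $g_* \in W^{1,1}_\loc$)
\[ \partial_t m = -\dot y_*\, g_*.\partial_x w_*^\pm + \dot \phi_*\, e_1 \wedge m. \]
Then, using the first order ODE for $w_*^\pm$ displayed just below \eqref{eq:theta*}, together with the facts that the rotation $R_{\phi_*}$ commutes with the cross product and fixes $e_1$ and that translations act trivially on the constant field $e_1$, I would compute
\[ g_*.\partial_x w_*^\pm = \sqrt{1-\ga^2}\, m \wedge (e_1 \wedge m) - \ga\, e_1 \wedge m. \]

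For the right-hand side of \eqref{ll}, the crucial observation is that Theorem \ref{thm_DM} guarantees $w_*^\pm$ is a critical point of $E$, so $\delta E(w_*^\pm)$ is pointwise parallel to $w_*^\pm$. By the $G$-invariance of $E$, the Euler-Lagrange operator is equivariant, $\delta E(g_*.w_*^\pm) = g_*.\delta E(w_*^\pm)$, so $\delta E(m) \parallel m$ as well, and hence $m\wedge \delta E(m) \equiv 0$. Only the applied field $h(t)e_1$ survives the cross products in \eqref{ll}, and using the identity $m \wedge (m \wedge e_1) = m_1 m - e_1 = - m \wedge (e_1 \wedge m)$ the right-hand side becomes
\[ -h(t)\, e_1 \wedge m + \alpha h(t)\, m \wedge (e_1 \wedge m). \]

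Finally, I would match coefficients. Since the precessing domain wall avoids $\pm e_1$, the two tangent vector fields $e_1 \wedge m$ and $m \wedge (e_1 \wedge m)$ are pointwise orthogonal and both nonzero, hence linearly independent. Equating the two expressions above therefore decomposes into the two scalar identities
\[ -\sqrt{1-\ga^2}\,\dot y_* = \alpha h(t), \qquad \ga\,\dot y_* + \dot \phi_* = -h(t), \]
which is exactly the system \eqref{eq_y_phi}. The initial condition $m(0,\cdot) = w_*^\pm$ follows from $g_*(0) = (0,0)$. I do not anticipate any real obstacle: the whole argument is algebraic, and the only care needed is to interpret all identities in the $L^1_\loc$-in-$t$ sense consistent with the regularity $g_* \in W^{1,1}_\loc$.
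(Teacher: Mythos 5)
Your proposal is correct and is essentially the paper's own Step~2 (the rigorous verification): compute $\partial_t m$ via \eqref{formula4}, use the first-order ODE \eqref{345} for $w_*^\pm$, the equivariance $\delta E(g.m)=g.\delta E(m)$, and the criticality $w_*^\pm\wedge\delta E(w_*^\pm)=0$ to reduce \eqref{ll} to matching coefficients in the pointwise basis $\{e_1\wedge m,\,m\wedge(e_1\wedge m)\}$, which gives exactly \eqref{eq_y_phi}. The only difference is cosmetic: the paper precedes this by a heuristic derivation of \eqref{eq_y_phi} in spherical coordinates (its Step~1), which you omit, and the paper states the verification as "plug in \eqref{eq_y_phi} and check," whereas you run the coefficient-matching forward to rederive \eqref{eq_y_phi}; the logical content is identical.
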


Note that in this statement, we do not require $\alpha >0$, although this condition is the physically relevant one, but we will require it in our stability result (see Theorem \ref{th:stab} below). In the case $\gamma=0$, the precessing domain walls were reported in \cite{GRS10}, whose linear asymptotic stability was proved in \cite{GGRS11} (the nonlinear asymptotic stability is verified only numerically in \cite{GGRS11} in the case $\gamma=0$). Our aim is to prove rigorously the nonlinear asymptotic stability of precessing domain walls in the more general case $\gamma^2<1$ when the Dzya\-lo\-shin\-skii-Mo\-riya interaction is taken into account. 

Note that in our stability statement, we will require more regularity on $h$ than in Corollary \ref{cor:precessing_dm}, namely that $h \in L^\infty([0, +\infty), \m R)$ (and small in that space): obviously in that case, the corresponding gauge $g_*$ is Lipschitz continuous (and of class $\q C^1$ if $h$ is assumed to be continuous). Provided that $h\in L^\infty([0, \infty))$ and $\alpha>0$, by Theorem \ref{th:lwp} below, the precessing domain walls \eqref{preces} are the unique solutions to \eqref{ll} with initial data $w_*^\pm$.

\subsection{Asymptotic stability under the Landau-Lifshitz-Gilbert flow}

We denote $H^s$ (and $L^p$) for the Sobolev space $H^s(\m R, \m R^3)$ with $s \ge 0$ (and the Lebesgue space $L^p(\m R, \m R^3)$ with $p\in [1, \infty]$, respectively). We also denote $\dot H^s$ for the homogeneous Sobolev space  whose seminorm is given via Fourier transform:
\begin{equation} 
\label{def:H^s}
\| m \|_{\dot H^s}^2 := \frac{1}{2\pi} \int_\R |\hat m (\xi)|^2 |\xi|^{2s} d\xi, \quad \text{where} \quad \hat m(\xi) = \int_{\m R} e^{-ix\xi} m(x)\, dx.
\end{equation}
(In particular, $\| m \|_{\dot H^2} = \| \partial_{xx} m \|_{L^2}$). In the following, we work in the spaces $\q H^s$ (for $s \ge 1$) modelled on the inhomogeneous Sobolev spaces $H^s$ (whose norm is given by $\|\cdot\|_{H^s}=\|\cdot\|_{L^2}+\|\cdot \|_{\dot H^s}$)
but adapted to the geometry of the target manifold $\m S^2$. More precisely, we define for $s \ge 1$:
\begin{align} 
\label{def:qH^s}  \q H^s &:= \{ m=(m_1, m_2, m_3) \in \q C(\m R, \m S^2) :  \| m \|_{\q H^s} < +\infty \}\\ 
\nonumber &\textrm{with}\quad \| m \|_{\q H^s}:= \| m_2 \|_{L^2} + \| m_3 \|_{L^2} + \| m \|_{\dot H^s}.
\end{align}
(Note that $m\in \q H^s$ implies that $m_1$ is not in $L^2$.)
In particular, $\q H^1$ corresponds to the set of finite energy configurations $E(m) < +\infty$ (see Lemmas \ref{lem:Hs_H1} and \ref{lem:infinit} in Appendix~\ref{append}) in which case, the energy gradient $\delta E(m)\in H^{-1}$. Also if $m, \tilde m \in \q H^1$ with $m(\pm \infty)=\tilde m(\pm \infty)$, then $m - \tilde m \in H^1$. Moreover, if $w_*$ is one of the domain walls $w_*^\pm$ in \eqref{formul1}, then every configuration $m\in \q H^1$ with $\|m-w_*\|_{\q H^1}$ small enough stays close to $w_*$ in $H^1$, i.e., 
$\|m-w_*\|_{H^1}\lesssim \|m-w_*\|_{\q H^1}$
(we refer to Proposition \ref{prop:conv_H_H1} and Lemma  \ref{lem:H_H1_2} for proofs and more details).

 \begin{nb} \label{rem23} Note that all the derivates of $w_*^\pm$ of order $k\ge 1$ are exponentially localised, so that
$w_*^\pm\in {\q H}^k$ for all $k\ge 1$. In particular,  for any $g\in G$, $g. w_*^\pm-w_*^\pm\in H^1$ and more generally if  $w\in w_*^\pm + H^1$ then $g. w\in w_*^\pm+ H^1$ for any $g\in G$, an observation we will use on several occasions (see \eqref{here-a} below).
\end{nb}

Our main result is the asymptotic stability of precessing domain walls in $H^1$ for small applied magnetic field $h$ and under small perturbations of the initial data, up to an adequate gauge choice. It means that for small $h \in L^\infty([0,+\infty))$, if the flow \eqref{ll} starts with an initial data $m_0$ close in $\q H^1$ to a (static) domain wall $w_*^\pm$, then the (unique) solution $m(t,x)$ of \eqref{ll} stays close in the $H^1$ topology at all times $t>0$ to the precessing domain wall  $g_*(t). w_*^\pm$ (given in Corollary \ref{cor:precessing_dm}), up to a fixed gauge $g_\infty \in G$. This gauge freedom  cannot be avoided due to the invariance of the equation, and it is the only degree of freedom in the problem.

\begin{thm}[Asymptotic stability of precessing domain walls in $H^1$] \label{th:stab}
Let $\alpha >0$ and $\ga \in (-1,1)$. There exist $\delta_0 >0$, $\sigma >0$ and $C>0$ such that the following holds: if $w_*\in \{w_*^\pm\}$, $h \in L^\infty([0,+\infty), \m R)$  and the initial data $m_0 \in \q H^1$ satisfy
\begin{align} \label{def:e0}
\| m_0 - w_* \|_{\q H^1} < \delta_0, \quad \| h \|_{L^\infty([0,+\infty))} < \delta_0, 
\end{align}
then there exist a unique solution $m \in \q C([0,+\infty), \q H^1)$ of \eqref{ll} with the initial data $m_0$, defined globally for forward times $t>0$,  and a Lipschitz  gauge
$g= (y, \phi): [0,+\infty) \to G$ such that 
\begin{align} \label{eq:decay}
\forall t \ge 0, \quad |\dot g(t) - \dot g_*(t)| + \| m(t) - g(t). w_* \|_{H^1} \le C  e^{-\sigma t} \| m_0 - w_* \|_{\q H^1},
\end{align}
where $g_*$ is defined in \eqref{eq_y_phi}. 
In particular, there exists a gauge $g_\infty \in G$ such that $ |g_\infty| \le C \| m_0 - w_* \|_{\q H^1}$ and 
\begin{align} \label{est:decay2}
\forall t \ge 0, \quad \| m(t) - (g_\infty +g_*(t)). w_* \|_{H^1} \le C  e^{-\sigma t} \| m_0 - w_* \|_{\q H^1}. 
\end{align}
\end{thm}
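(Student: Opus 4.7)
The plan follows the by-now classical modulation strategy for asymptotic stability of symmetry-generated profiles, adapted to the damped, forced Landau-Lifshitz-Gilbert flow. The damping $\alpha>0$ will supply dissipation in a Lyapunov-type estimate, and the reference gauge $g_*$ from \eqref{eq_y_phi} is precisely chosen so that the linear-in-$h$ resonant effect of the forcing is absorbed, leaving only quadratic errors.

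\textbf{Modulation.} On a maximal interval $[0,T^*)$ on which $m(t)$ stays $H^1$-close to the $G$-orbit of $w_*$, decompose
\begin{equation*}
m(t,x) \;=\; g(t).\bigl(w_*(x)+\varepsilon(t,x)\bigr)_{\m S^2},
\end{equation*}
where $(\cdot)_{\m S^2}$ denotes a smooth $\m S^2$-valued parameterization (nearest-point projection, or a graph/exponential chart) preserving $|m|=1$, and $g(t)=(y(t),\phi(t))\in G$ is uniquely determined by the two orthogonality conditions
\begin{equation*}
\int_\R \varepsilon(t)\cdot \partial_x w_*\,dx \;=\; 0,\qquad \int_\R \varepsilon(t)\cdot (e_1\wedge w_*)\,dx \;=\; 0.
\end{equation*}
These quotient out the tangent space to the $G$-orbit at $w_*$, which by \eqref{formula4} is spanned exactly by $\partial_x w_*$ and $e_1\wedge w_*$. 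The implicit function theorem applied under $\|m_0-w_*\|_{\q H^1}<\delta_0$ produces $g(0)$ near $0\in G$; the decomposition persists as long as $\|\varepsilon(t)\|_{H^1}$ stays below a fixed threshold.

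\textbf{Perturbation equation and modulation estimates.} Substituting the ansatz into \eqref{ll} and using the intertwining identities \eqref{formula4}, one obtains schematically
\begin{equation*}
\partial_t\varepsilon + \q L\varepsilon \;=\; -\dot y\,\partial_x w_* + \dot\phi\,(e_1\wedge w_*) + h(t)\,F(w_*) + R(\varepsilon,h),
\end{equation*}
where $\q L$ is the linearisation of the damped LLG vector field at $w_*$ with $h\equiv 0$, $F(w_*)$ collects the forcing at $\varepsilon=0$, and $R=O(\varepsilon^2+\varepsilon h)$. Differentiating the orthogonality conditions and projecting onto $\partial_x w_*$ and $e_1\wedge w_*$ produces an invertible $2\times 2$ linear system for $(\dot y,\dot\phi)$; the calibration \eqref{eq_y_phi} of $g_*$ is exactly the solution of this system at $\varepsilon=0$, so
\begin{equation*}
|\dot y-\dot y_*|+|\dot\phi-\dot\phi_*|\;\lesssim\;\|\varepsilon(t)\|_{H^1}\bigl(|h(t)|+\|\varepsilon(t)\|_{H^1}\bigr).
\end{equation*}

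\textbf{Coercivity and Lyapunov estimate.} The spectral cornerstone is the coercivity
\begin{equation*}
\langle \delta^2 E(w_*)\varepsilon,\varepsilon\rangle \;\ge\; c\,\|\varepsilon\|_{H^1}^2
\end{equation*}
for $\varepsilon$ satisfying the two orthogonality conditions. Non-negativity of $\delta^2 E(w_*)$, modulo the Lagrange multiplier enforcing $|m|=1$, is a consequence of the minimality encoded in Theorem \ref{thm_DM}; its kernel coincides with $\mathrm{Span}(\partial_x w_*,e_1\wedge w_*)$, and the gap combined with the fact that the principal part is $-\partial_{xx}$ upgrades positivity to the $H^1$-norm. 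Define the Lyapunov functional
\begin{equation*}
\q F(t)\;:=\;E(m(t))-E(w_*)+\Lambda(t),
\end{equation*}
with $\Lambda(t)$ a small $h$-dependent correction absorbing the dominant linear contribution of the forcing. Differentiating along \eqref{ll} and using $\alpha>0$,
\begin{equation*}
\dot{\q F}(t)\;=\;-\alpha\|m\wedge H(m)\|_{L^2}^2+\dot\Lambda(t)+h(t)\cdot(\text{boundary/flux terms}),
\end{equation*}
and the coercivity, applied to the leading part of $m\wedge H(m)$, bounds the dissipation from below by $c'\alpha\|\varepsilon\|_{H^1}^2$. Provided $\delta_0$ is small, the $h$-dependent remainders are absorbed and one obtains $\dot{\q F}\le -2\sigma\q F$, hence $\|\varepsilon(t)\|_{H^1}\lesssim e^{-\sigma t}\|m_0-w_*\|_{\q H^1}$.

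\textbf{Conclusion and main obstacle.} A continuity/bootstrap argument based on this decay rules out $T^*<\infty$; the modulation bound then shows $\dot g-\dot g_*$ is exponentially integrable, so $g(t)-g_*(t)\to g_\infty$ with $|g_\infty|\lesssim\|m_0-w_*\|_{\q H^1}$, and \eqref{eq:decay}--\eqref{est:decay2} follow by telescoping $m(t)-(g_\infty+g_*(t)).w_*=(m(t)-g(t).w_*)+(g(t)-g_\infty-g_*(t)).w_*$. The hard part is the $H^1$-coercivity of $\delta^2 E(w_*)$: for $\gamma=0$ it reduces to a scalar Schr\"odinger operator with $\mathrm{sech}^2$ potential handled in \cite{CL06a,GGRS11}, but for $\gamma\ne 0$ the DMI term $2\gamma\partial_x m\cdot(e_1\wedge m)$ produces a first-order skew-symmetric coupling between the two transverse components of $\varepsilon$. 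A natural remedy is to conjugate $\delta^2 E(w_*)$ by the explicit rotation $R_{\varphi_*(x)}=R_{-\gamma x}$ coming from \eqref{formul1}: the first-order DMI terms are then absorbed into constant zero-order shifts, and the resulting operator is again amenable to the Bloch-wall spectral analysis for $\gamma^2<1$. A secondary delicate point is ensuring that the $L^\infty$-only (non-integrable) forcing $h(t)$ does not pollute the exponential rate, which is exactly why $g_*$ must be built from the full linear response and why $\Lambda(t)$ is needed in $\q F$.
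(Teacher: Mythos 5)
Your overall strategy — modulation against the $G$-orbit, orthogonality conditions along $\Span\{\partial_x w_*, e_1\wedge w_*\}$, energy coercivity of $\delta^2 E(w_*)$, and an energy-dissipation Gronwall estimate driven by $\alpha>0$ — is the same as the paper's. But there are three points where the details drift away from what actually works.

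\emph{Modulation rate.} The claimed bound $|\dot g-\dot g_*|\lesssim\|\e\|(|h|+\|\e\|)$ is too strong. After projecting the perturbation equation onto $\partial_x w_*$ and $e_1\wedge w_*$, the \emph{linearised} (order one in $\e$, zeroth order in $h$) dynamics of $\e$ contributes a genuine $O(\|\e\|_{H^1})$ term: e.g.\ $\int\e\cdot\delta E(\partial_x w_*)\,dx$ is nonzero because $\delta E(\partial_x w_*)=\partial_x(\beta_*w_*)$ is not $L^2$-orthogonal to $\e$ under the imposed constraints (the potential $\beta_*$ is not constant). The correct estimate, which still suffices, is $|\dot g(t)-\dot g_*(t)|\lesssim (1+|h(t)|)\,\|\e(t)\|_{H^1}$, cf.\ Proposition~\ref{lem:decomp}.

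\emph{The corrector $\Lambda(t)$ is not needed, and the reason is the key cancellation you are missing.} The forcing term in the dissipation identity \eqref{eq:en_dissip2} is $\alpha h(t)\int(m\wedge e_1)\cdot(m\wedge\delta E(m))\,dx$. At order zero in $\e$, the integrand vanishes pointwise since $w_*\wedge\delta E(w_*)=0$. At order one in $\e$, the integrand equals $-\sin\theta_*\,L_\gamma\nu$ plus $O^2_2(\e)$; since $L_\gamma$ is self-adjoint with $L_\gamma\sin\theta_*=0$, the integral of the linear term vanishes. Hence the forcing contribution is already $O(|h|\,\|\e\|^2_{H^1})$ and can be absorbed into the dissipation for small $\delta_0$ with the plain Lyapunov functional $E(m)-E(w_*)$. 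Your worry that an $L^\infty$ field pollutes the rate is unfounded; the cancellation does the work, not a corrector.

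\emph{Regularity mismatch.} This is the real gap. The energy dissipation identity requires $\delta E(m)\in L^2$, i.e.\ $m(t)\in\q H^2$, while the stability statement is for $m_0\in\q H^1$. Your sketch freely differentiates $E(m(t))$ and writes a PDE for $\e$, which is not justified in $\q H^1$. One must run the entire bootstrap for $m_0\in\q H^2$ (using the local well-posedness in $\q H^s$), and only then recover $\q H^1$ initial data by a limiting argument, using the continuity of the flow and the uniformity in $T$ of the estimates. Relatedly, coercivity is needed at two levels — $(L_\gamma v,v)\gtrsim\|v\|_{H^1}^2$ (for the energy) and $\|L_\gamma v\|_{L^2}^2\gtrsim\|v\|_{H^2}^2$ (for the dissipation), both modulo the kernel — whereas you only invoke the $H^1$ version.

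Your remedy for the DMI coupling (conjugating $\delta^2 E(w_*)$ by $R_{\varphi_*}$) is a valid route; the paper instead decomposes $\e$ in the moving frame $(w_*,n_*,p_*)$, which decouples the second variation into two copies of the same scalar operator $L_\gamma$. These are equivalent ways to handle $\gamma\neq 0$, so that part of your proposal is fine.
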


\begin{nb}
The asymptotic stability of the precessing domain walls is expected to fail for large applied fields $|h|$, as mentioned in the formal paper \cite{GGRS11}. The heuristic explanation is the following: 
 for a constant applied field $h(t)=h_0 > 1-\gamma^2$ (resp. $h_0 < - (1-\gamma^2)$), the static constant solution $m = -e_1$ (resp. $m = +e_1$) is linearly unstable, i.e., the second variation $D^2 E(m)$ is negative in some direction orthogonal to $e_1$. As the precessing domain walls are exponentially localised in space (i.e., in $x$ variable) at fixed time, they are nearly constant away from the domain wall; therefore, it is expected that the precessing domain walls are similarly unstable for $|h_0|> 1-\gamma^2$ (see also \cite{Jiz11} for a rigourous proof). Numerical results are also provided in \cite{GGRS11} to support this expectation.
\end{nb}

\subsection{Comments}

Earlier stability results (for $H^2$ perturbation) were obtained by Carbou and Labb\'e \cite{CL06a} for static domain walls (called Bloch walls), in the absence of applied field. Jizzini \cite{Jiz11} establishes  $H^1$ stability under a constant applied field (up to the optimal size), without proving any exponential decay, see also \cite{Car10,Car14}. These cited works are done in the absence of DMI, i.e., $\gamma=0$. Their approach is rather different from ours as it is focused on obtaining a Lyapunov type argument, by studying the linearised operator around the travelling wave (associated to the Bloch wall under the (LLG) flow) and putting in evidence a spectral gap. We also refer to Takasao \cite{Tak11} where a similar method was used to prove asymptotic $H^1$ stability of a travelling wave called the Walker wall, which appears in a different context (the transition layers connect the transversal directions $-e_2$ and $e_2$ to the nanowire and the anisotropy penalises the $m_1$ component) under a constant applied field and in absence of DMI.

\bigskip

Our method is significantly different from \cite{CL06a, Tak11}: for the proof of Theorem \ref{th:stab}, we essentially rely on an adequate expansion of the energy dissipation identity (see \eqref{eq:en_dissip2} and Proposition \ref{prop:en_expansion}), and on the positivity of the damping coefficient $\alpha >0$, as do all the stability results quoted above. (In case of no damping, i.e., $\alpha=0$, the equation will have the flavour of a Schr\"odinger map equation, for which stability questions of domain walls are  widely open.) 
Doing so, we are able to consider applied magnetic fields $h(t)$ depending on time which are not necessarily continuous: this can turn out to be very relevant in control questions, where the intensity of the applied field is a natural control (for this matter, we refer to the work of  Carbou, Labb\'e and Trelat \cite{CLT08}). On this topic, let us emphasise that the regularity assumption $h \in L^\infty$ makes a lot of sense (for bang-bang controls in particular) and this is why we rather state and prove our result under this condition, and not under slightly more regular assumption such as $h$ continuous and bounded (which would yield $\q C^1$ regularity of $m$ and $g$).  

We are also able to treat more general energy functionals, and specifically the Dzya\-lo\-shin\-skii-Moriya interaction\footnote{One idea in treating the DMI at the stationary level is to consider the rotated magnetization $M=R_{\varphi_*} m$ with $\varphi_*$ given in \eqref{formul1} yielding an energy density for $M$ without DMI, see Step 2 in the proof of Theorem~\ref{thm_DM}. One could also use this rotated magnetisation at the dynamical level, however our method based on modulation is rather indifferent on DMI, and the computations do not get more technical due to DMI.} -- which is physically relevant -- and a more involved geometric context (we consider \emph{precessing} domain walls, with the additional action of rotations next to translations); even though one might be able to implement a Lyapunov argument in this context, the computations should turn out to be noticeably trickier than those we perform.

\bigskip

We mentioned above that \cite{GGRS11}  proved linear stability of precessing domain walls (without the Dzyaloshinskii-Moriya interaction), and we took some inspiration from their computation. But at the linear level, the geometric considerations can be mostly avoided. To be able to complete the stability proof at the nonlinear level, the evolution equation \eqref{ll} has to be written under a suitable gauge transform, corresponding to the choice of an orthogonal frame. In particular, we develop a modulation theory in the context of functions taking values into a manifold (see Lemma \ref{lem:mod} and Proposition \ref{lem:decomp}), where the regularity issues are to be taken care with caution. Another important difficulty in making the arguments rigorous (when compared to \cite{GGRS11}) is that the energy $E$ is coercive around a domain wall $w_*$ only at level $H^1$ of regularity but the dissipative effects are accessible only at level $H^2$; hence, the functional setting and bootstrap are to be carefully chosen. We also made a point of presenting the results in the energy space $\q H^1$, and we refer to Propositions \ref{prop:conv_H_H1} and Lemmas \ref{lem:H_H1_2} which state how $\q H^1$ is related to $w_*+ H^1$.

\bigskip

On the bright side, our arguments are amenable to space localisation: this an important improvement with respect to \cite{CL06a, Tak11}, and actually a great motivation for this work. Our method should indeed be effective to understand more complicated configurations, for example the dynamics of several domain walls in interaction. We conjecture that, at least for sufficiently small applied field $h$, generic magnetisations behave as a combination of far-off walls. As critical points of the energy $E$ with several walls do not exist (see Remark \ref{rem:several}), they either collide or they separate at infinite distance; in view of the possible relative speeds, it seems that at most two walls are possible. This will be the purpose of forthcoming works.

\bigskip

The paper is organised as follows. In Section 2, we present the derivation of the reduced micromagnetic model in  \eqref{def:energy} by proving a dimension reduction result via $\Gamma$-convergence in a certain regime for a ferromagnetic nanowire. In Section 3, we prove the results on static domain walls, namely Theorem \ref{thm_DM} as well as the structure of the precessing domain walls in Corollary \ref{cor:precessing_dm}. In Section 4, we prove modulation results at the stationary level in Lemma \ref{lem:mod} and at the dynamical level in Proposition \ref{lem:decomp}; these results together with the energy coercivity and energy dissipation estimates in Proposition \ref{prop:en_expansion} are the main ingredients in proving the asymptotic stability result in Theorem~\ref{th:stab}. In  Appendix, we give a few useful identities regarding the gauge $g \in G$, and various properties of the space $\q H^1$, related with the energy $E$ and  the Sobolev space $H^1$.

\section{Dimension reduction via \texorpdfstring{$\Gamma$}{Gamma}-convergence}
\label{sec:model}

Let $\Om\subset \m R^3$ be a cylinder modelling
a nanowire of axis $(-L, L)$ in direction $e_1$ and cross-section given by the disk $B_R$ centred at the origin of radius $R$  in the plane $(e_2, e_3)$. For a magnetisation $m:\Om\to \m S^2$, the following 3D micromagnetic energy 
functional is defined: 
\[  \m E(m)= \int_\Om d^2 |\nabla m|^2 +Q\F(m)+ DMI(m) \, dX+\int_{\m R^3} |\nabla U|^2\, dX \]
where $X=(X_1, X_2,X_3)\in \R^3$ is the space variable, $d>0$ is the exchange length, $Q>0$ is the quality factor associated to the uniaxial anisotropy of the form
\[ \F(m)=1-m_1^2=m_2^2+m_3^2 \]
favouring the axes $\pm e_1$. Here, $U:\R^3\to \R$ is the stray-field potential solving the static Maxwell equation
\[ \Delta U=\nabla\cdot (m \m {1}_\Om)\quad \textrm{in} \quad \R^3, \] 
where one thinks of $m = m \m{1}_\Om$ as being extended by $0$ outside $\Omega$. 
We take into account the Dzyaloshinskii-Moriya interaction (DMI) that has the energy density
\[ DMI(m)=D:(\nabla m \wedge m)=\sum_{k=1}^3 D_k\cdot (\partial_k m\wedge m) \]
where the tensor $D=(D_1,D_2, D_3)\in \R^{3\times 3}$ depends on the ferromagnetic material, $\wedge$ is the exterior product in $\R^3$ and $:$ stands for the Frobenius inner product of tensors.

The variational problem to describe minimisers for $\m E$ is nonlocal (due to the stray-field $\nabla U$) and nonconvex (due to the constraint $|m|=1$). We will focus on a regime of very thin and hard nanowire where the nonlocal contribution of the stray-field energy becomes negligible.

\subsection{Asymptotic regime} 
We denote $a\ll b$ (or $a=o(b)$) if $\frac a b\to 0$, resp. $a\lesssim b$ (or $a=O(b)$) if there is a constant $C>0$ such that $a\le C b$, and $a\sim b$ if $a\lesssim b$ and $b\lesssim a$. We also denote $o_a(1)$ a quantity that tends to $0$ as $a\to 0$.
Denoting the length scale 
\[ \rho:=\frac d{\sqrt{Q}}, \] we assume the following regime of parameters between the length $L$ of the nanowire, the thickness $R$ of the cross-section, the exchange length $d$, the  Dzyaloshinskii-Moriya tensor $D$ and $\rho$:
\be
\label{regime}
\begin{cases}
&\ds  R\ll \rho \ll L,  \, \frac{\rho L}{d^2}\ll 1, \, \frac{\rho}{d^2}D_{1,1}\sim 1,\\
& \frac{\rho}{d^2}(|D_{1,2}|+|D_{1,3}|+|D_{2,1}|+|D_{3,1}|)\ll 1,\\ 
&\frac{\sqrt{\rho L}}{d^2}(|D_{2,2}|+|D_{2,3}|+|D_{3,2}|+|D_{3,3}|)\ll 1,
\end{cases}
\ee
where $D_k=(D_{k,1}, D_{k,2}, D_{k,3})$ for $k=1,2,3$.
The asymptotic analysis is carried out after the parameter
\[ \eta:=\frac R \rho\to 0, \]
 assuming that all the parameters of the system are functions depending on $\eta$. In the following, we assume more than $\frac{\rho}{d^2}D_{1,1}\sim 1$; namely, 
\be 
\label{reg2}
\frac{\rho}{d^2}D_{1,1}\to -2\gamma \quad \text{as} \quad \eta\to 0, \ee
for a constant $\gamma\in (-1,1)$. In particular, we have
\[ R\ll \frac{d}{\sqrt Q}\ll L\ll d\sqrt Q, \quad 1\ll Q, \]
 which means that the material is hard (i.e., the quality factor is large), the nanowire is very thin and of short length (since $d$ is of order of nanometers). As we prove in Theorem \ref{thm:gamma} below, the stray-field contribution vanishes asymptotically in the regime \eqref{regime} \& \eqref{reg2}.

\subsection{Rescaling} 
We rescale as follows: 
\begin{gather*}
\tilde X=(X_1/\rho, X_2/R, X_3/R), \quad \tilde \Om=(-L/\rho, L/\rho)\times B_1, \quad \tilde m(\tilde X)=m(X), 
\quad \tilde U(\tilde X)=\frac1R U(X), \\
\tilde E_\eta(\tilde m):=\frac{\rho}{d^2R^2} \m E(m).
\end{gather*}
Therefore, \emph{denoting $'$ for the quantities depending on the last two variables} (i.e., $\tilde X'=
(\tilde X_2, \tilde X_3)$, $\tilde \nabla'=(\tilde \partial_{2}, \tilde \partial_{3})$, $\tilde m'=(\tilde m_2, \tilde m_3)$, $D'=(D_2, D_3)\in \R^{3\times 2}$ etc \dots), we have
\begin{align}
\label{en*}
\tilde E_\eta(\tilde m)&=\int_{\tilde \Om} |\tilde \partial_1 \tilde m|^2+|\frac 1{\eta}\tilde \nabla' \tilde m|^2
+ \F(\tilde m)+\frac\rho{d^2}\bigg(D_1\cdot \tilde \partial_1 \tilde m\wedge \tilde m+
D': (\frac 1 \eta \tilde \nabla' \tilde m)\wedge \tilde m\bigg)\, d\tilde X\\
\nonumber
&\quad \quad +\frac{R^2}{d^2}\int_{\R^3} |\tilde \partial_1 \tilde U|^2+|\frac1 \eta\tilde \nabla' \tilde U|^2\, d\tilde X
\end{align}
where $\eta^2 \tilde \partial_{11} \tilde U+\tilde \Delta' \tilde U=\eta \tilde \partial_1 (\tilde m_1 {\m 1_{\tilde \Om}})+\tilde \nabla'\cdot (\tilde m' {\m 1_{\tilde \Om}})$ in $\R^3$.
For simplicity of notation, \emph{we skip the $\tilde{\cdot}$ in the following.}

\subsection{\texorpdfstring{$\Gamma$}{Gamma}-convergence}
For every $\eta>0$ small, we focus on configurations defined on the infinite cylinder 
\[ \Sigma:=\R\times B_1 \]
that are constant and equal to $\pm e_1$ outside the rescaled nanowire $\Sigma_\eta:=(-L/\rho, L/\rho)\times B_1,$ i.e., 
\[ \M_\eta=\left\{m:\Sigma\to \m S^2\, :\, \|m\|_{\q H^1(\Sigma)}<\infty, \, m'=(m_2, m_3)=0 \, \textrm{ if } \, |x_1|\ge L/\rho\right\} \]
where $\M_\eta$ is endowed with the seminorm 
\[ \|m\|_{\q H^1(\Sigma)}:=\|\nabla m\|_{L^2(\Sigma)}+\|m'\|_{L^2(\Sigma)}. \]
As $m\in \M_\eta$ is constant outside of  $\Sigma_\eta$, we can rewrite the energy \eqref{en*} as
\begin{align}
\label{defEe}
E_\eta(m) & =\int_{\Sigma} |\partial_1 m|^2+|\frac 1{\eta}\nabla' m|^2
+ \F(m)+\frac\rho{d^2}\bigg(D_1\cdot \partial_1  m\wedge m+
D': (\frac 1 \eta  \nabla'  m)\wedge  m\bigg)\, dX \\
\nonumber
& \qquad +\frac{R^2}{d^2}\int_{\R^3} |\partial_1  U|^2+|\frac1 \eta \nabla'  U|^2\, dX,
\end{align}
where the stray-field potential $U:\R^3\to \R$
is the unique solution in $\dot H^1(\R^3)$ of
\[ \partial_{11} U+ \frac1{\eta^2}\Delta'  U=\frac1\eta  (\partial_1, \frac1 \eta \nabla')\cdot (m {\m 1_{\Sigma_\eta}}) \quad \text{in} \quad \R^3, \] 
i.e. (in consequence of the Lax-Milgram theorem applied in the space $\dot H^1(\R^3)$ under the assumption 
$m\in L^2(\Sigma_\eta)$),
\[ \int_{\R^3} (\partial_1, \frac1 \eta \nabla')U \cdot (\partial_1, \frac1 \eta \nabla')\zeta\, dX=
\frac1 \eta\int_{\Sigma_\eta} m\cdot (\partial_1, \frac1 \eta \nabla')\zeta\, dX, \quad \text{for every } \zeta\in \q C^\infty_c(\R^3). \]
As $m\in L^2(\Sigma_\eta)$, due to the density of $\q C^\infty_c(\R^3)$ in $\dot H^1(\R^3)$, the above equality is satisfied for all $\zeta \in \dot H^1(\R^3)$. In particular, for $\zeta=U$ it yields
\be
\label{stray_l2}
\int_{\R^3} \big|(\partial_1, \frac1 \eta \nabla')U\big|^2\, dX\le \frac1{\eta^2} 
\int_{\Sigma_\eta} |m|^2\, dX.
\ee
Our result shows that in the regime \eqref{regime}\& \eqref{reg2}, $E_\eta$ $\Gamma$-converges to the limit energy
\be
\label{defE0}
 E_0(m)=\int_{\Sigma} |\partial_1 m|^2+
\F(m) -2\gamma e_1 \cdot \partial_1  m\wedge m\, dX \ee
that is defined for configurations depending only on $X_1$: 
\[ \M_0=\left\{m:\Sigma\to \m S^2\, :\,  m=m(X_1), \|m\|_{\q H^1(\Sigma)}<\infty \right\}, \]
in particular, $m'$ has vanishing limit as $X_1\to \pm \infty$ (as $m'\in H^1$). Note that for $m$ depending only on the variable $X_1$, this limit model is exactly the one presented in the introduction (up to a multiplicative constant for the energy $E$ in \eqref{def:energy}). The $\Gamma$-convergence \footnote{One can consider that $E_\eta$ is extended by $+\infty$ in the set $\q H^1(\Sigma)\setminus \M_\eta$, as well as $E_0$ is extended by $+\infty$ in the set  $\q H^1(\Sigma)\setminus \M_0$. } is carried out in the $\q H^1(\Sigma)$ weak topology, i.e., we say that $m_\eta\rightharpoonup m_0$ weakly in $\q H^1(\Sigma)$-topology if and only if $\nabla m_\eta\rightharpoonup \nabla m_0$ and $m'_\eta\rightharpoonup m'_0$ weakly in $L^2(\Sigma)$.

\begin{thm}[$\Gamma$-convergence]
\label{thm:gamma}
Let $E_\eta$ be given in \eqref{defEe} and $E_0$ in \eqref{defE0}. 
In the regime \eqref{regime}\& \eqref{reg2}, $E_\eta\stackrel{\Gamma}{\longrightarrow} E_0$ as $\eta\to 0$ in the weak $\q H^1(\Sigma)$ topology, i.e.,

\begin{enumerate}
\item Compactness: If $m_\eta\in \M_\eta$ such that $\limsup_{\eta\to 0} E_\eta(m_\eta)<\infty$, then for a subsequence, $m_\eta\rightharpoonup m_0$ weakly in $\q H^1(\Sigma)$ for a limit $m_0\in \M_0$.

\item Lower bound: If $m_\eta\in \M_\eta$ and $m_0\in \M_0$ with $m_\eta\rightharpoonup m_0$ weakly in $\q H^1(\Sigma)$ as $\eta\to 0$, then 
\[ \liminf_{\eta\to 0} E_\eta(m_\eta)\ge E_0(m_0). \]

\item Upper bound: If $m_0\in \M_0$, then there exists a family $m_\eta\in \M_\eta$ such that $m_\eta\to m_0$ strongly in 
$\q H^1(\Sigma)$ as $\eta\to 0$ and 
\[ \lim_{\eta\to 0} E_\eta(m_\eta)=E_0(m_0). \]
\end{enumerate}
\end{thm}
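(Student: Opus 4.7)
My plan is to treat the three items in turn, exploiting the scaling hypotheses \eqref{regime}--\eqref{reg2} to absorb indefinite and lower-order contributions of $E_\eta$ into the positive quadratic core $\|\partial_1 m\|_{L^2}^2 + \|\nabla' m/\eta\|_{L^2}^2 + \|m'\|_{L^2}^2$, and to leverage the $1/\eta$-penalisation to force the limit to be one-dimensional.

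For \textbf{compactness}, I estimate each auxiliary piece of $E_\eta(m_\eta)$. The $D_{1,1}$-piece of the $D_1$-DMI is the only one of non-negligible size; it combines with the exchange and anisotropy via the pointwise identity
\[ |\partial_1 m|^2 + 2\gamma\,\partial_1 m\cdot(e_1\wedge m) + (1-m_1^2) = |\partial_1 m+\gamma(e_1\wedge m)|^2 + (1-\gamma^2)(1-m_1^2), \]
which yields coercivity in $\|\partial_1 m\|_{L^2}^2+\|m'\|_{L^2}^2$ thanks to $\gamma^2<1$. The $D_{1,2}, D_{1,3}$-pieces carry coefficient $o(1)$ and are absorbed by Young's inequality. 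The $D'$-DMI is bounded by Cauchy--Schwarz combined with $|\Sigma_\eta|\sim L/\rho$ as $\frac{\rho|D'|}{d^2\eta}\|\nabla' m\|_{L^2(\Sigma_\eta)}\sqrt{|\Sigma_\eta|}\lesssim \frac{\sqrt{\rho L}}{d^2}|D'|\cdot\|\nabla' m/\eta\|_{L^2}=o(1)\cdot\|\nabla' m/\eta\|_{L^2}$; the stray-field term is $\lesssim \frac{R^2}{d^2\eta^2}|\Sigma_\eta|\sim\frac{\rho L}{d^2}=o(1)$ via \eqref{stray_l2}. Together these give uniform bounds on the three quadratic parts, whence a weak $\q H^1(\Sigma)$ limit $m_0$ along a subsequence. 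Since $\|\nabla' m_\eta\|_{L^2}\lesssim\eta\to 0$, $m_0$ is independent of $X'$, and $m_0\in\m S^2$ almost everywhere by local Rellich compactness.

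For the \textbf{lower bound}, the positive terms $\|\partial_1 m_\eta\|_{L^2}^2$ and $\int\F(m_\eta)$ pass to the liminf by weak $L^2$ lower semicontinuity; $|\nabla' m_\eta/\eta|^2$ and the stray-field are nonnegative and discarded; and the $D'$ and $D_{1,2}, D_{1,3}$ subleading DMI contributions tend to $0$ as above. The delicate step is that the dominant $D_{1,1}$-DMI must \emph{converge} (not merely satisfy a lsc inequality), since $\frac{\rho}{d^2}D_{1,1}\to -2\gamma$ is sign-indefinite. I would integrate by parts in $X_1$, using that $m_\eta=\pm e_1$ outside $\Sigma_\eta$ so boundary terms vanish, to rewrite the $e_1$-integrand as $2\partial_1 m_{\eta,2}\,m_{\eta,3}$, and then split $m_{\eta,3}=m_{0,3}+(m_{\eta,3}-m_{0,3})$. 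The first piece converges by testing the weakly $L^2$-convergent $\partial_1 m_{\eta,2}$ against $m_{0,3}\in L^2(\Sigma)$; the second vanishes by Rellich compactness on bounded cylinders $(-R,R)\times B_1$ combined with a uniform tail bound coming from the $L^2$-integrability of $m'_0$ and density of compactly supported configurations.

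For the \textbf{upper bound}, any finite-energy $m_0\in\M_0$ has $X_1$-limits in $\{\pm e_1\}$ (by the argument of Lemma~\ref{lem:infinit}). I take $m_\eta(X):=m_0(X_1)$ on $|X_1|\leq L/\rho-1$, extend by spherical geodesic interpolation from $m_0(\pm(L/\rho-1))$ to $\pm e_1$ on the thin annulus $L/\rho-1<|X_1|<L/\rho$, and by $\pm e_1$ beyond; the interpolation cost vanishes as $\eta\to 0$ because $m_0(\pm(L/\rho-1))\to\pm e_1$. Then $E_\eta(m_\eta)\to E_0(m_0)$ by direct inspection: the exchange and anisotropy terms converge by dominated convergence, the $|\nabla' m_\eta/\eta|^2$ and $D'$-DMI terms vanish exactly since $m_\eta$ depends only on $X_1$, the $D_{1,1}$-DMI converges by $\frac{\rho}{d^2}D_{1,1}\to -2\gamma$, and the remaining pieces are $o(1)$ by the bounds above. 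The main obstacle throughout is the $D_{1,1}$-DMI convergence in the lower bound on the unbounded domain $\Sigma$: since the DMI is sign-indefinite, weak convergence alone is insufficient, and controlling the global $L^2$-tails of $m'_\eta$ (whose support grows with $L/\rho$) requires a careful combination of Rellich compactness, a density argument, and the $\F$-anisotropy bound.
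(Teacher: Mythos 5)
Your upper bound matches the paper's construction (restriction to the middle of the cylinder, geodesic interpolation to $\pm e_1$ near the ends, constant outside), and your compactness step is close in spirit to the paper's coercivity Lemma~\ref{lem:coercive}, with the same sum-of-squares completion for the $D_{1,1}$-DMI. But two points need repair, one minor and one serious.

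\textbf{Compactness, $D_{1,2}$ and $D_{1,3}$.} Applying Young directly to
$e_2\cdot\partial_1 m\wedge m=\partial_1 m_3\,m_1-\partial_1 m_1\,m_3$
does \emph{not} work: the term $\partial_1 m_3\,m_1$ produces, after Young, a contribution $\tfrac12 m_1^2\le \tfrac12$ integrated over $\Sigma_\eta$, hence a quantity of order $|\Sigma_\eta|\sim L/\rho$, and the assumption $\tfrac{\rho}{d^2}|D_{1,2}|=o(1)$ does not beat $L/\rho\to\infty$. The paper (Lemma~\ref{lem:coercive}) integrates by parts in $X_1$ \emph{first}, using $m\in\M_\eta$ to kill boundary terms, which turns the integral into $-2\int\partial_1 m_1\,m_3$; only then can Young absorb. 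You should do the same.

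\textbf{Lower bound, the $D_{1,1}$-DMI.} This is the real gap. Your plan is to show that $\int_\Sigma \partial_1 m_{\eta,2}\,m_{\eta,3}\,dX$ \emph{converges} to $\int_\Sigma \partial_1 m_{0,2}\,m_{0,3}\,dX$ by splitting $m_{\eta,3}=m_{0,3}+(m_{\eta,3}-m_{0,3})$ and using Rellich locally plus ``tail control''. But the energy bound $\|m'_\eta\|_{L^2(\Sigma)}+\|\partial_1 m'_\eta\|_{L^2(\Sigma)}\lesssim 1$ gives no tightness on the unbounded slab $\Sigma$, and tightness is genuinely false: a bump of unit $H^1$-norm in $m'_\eta$ whose centre runs to infinity has bounded $E_\eta$, converges weakly to $m_0\equiv e_1$, and carries a non-vanishing DMI integral, while $\int\partial_1 m_{0,2}\,m_{0,3}=0$. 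Thus convergence of the DMI term alone is not available and not needed, and what you propose to prove is simply not true. The correct route, which the paper takes, is to \emph{never} isolate the DMI: using the pointwise identity
\[
|\partial_1 m'_\eta|^2 -2\gamma\, e_1\cdot \partial_1 m_\eta\wedge m_\eta+\gamma^2 |m'_\eta|^2
=(\partial_1 m_{\eta,2}-\gamma m_{\eta,3})^2+(\partial_1 m_{\eta,3}+\gamma m_{\eta,2})^2,
\]
one absorbs the sign-indefinite DMI into two squares, each weakly lower semicontinuous (the integrands $\partial_1 m_{\eta,2}-\gamma m_{\eta,3}$, $\partial_1 m_{\eta,3}+\gamma m_{\eta,2}$ converge weakly in $L^2(\Sigma)$ by assumption), and what remains, $|\partial_1 m_{\eta,1}|^2+(1-\gamma^2)|m'_\eta|^2$, is nonnegative and also LSC. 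The small perturbation $\bigl(\tfrac{\rho}{d^2}D_{1,1}+2\gamma\bigr)$ and the subdominant DMI pieces are then handled as $o_\eta(1)$ using the coercivity bound. I recommend replacing your convergence argument for the DMI term by this sum-of-squares rewriting followed by weak lower semicontinuity; it is both shorter and correct, and it is exactly the point where the hypothesis $\gamma^2<1$ enters.
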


The key step in the proof of Theorem \ref{thm:gamma} consists in showing the coercivity of $E_\eta$ over $\M_\eta$:

\begin{lem}[Coercivity]
\label{lem:coercive}
In the regime \eqref{regime} \& \eqref{reg2}, 
there exist  two constants $\eta_\gamma>0$ and $C_\gamma>0$ such that for every $0<\eta\le \eta_\gamma$, 
\[  \forall m \in \M_\eta, \quad E_\eta(m)\ge C_\gamma\bigg(-1+\int_{\Sigma} |\partial_1 m|^2+|\frac 1{\eta}\nabla' m|^2+|m'|^2\, dX\bigg). \]
\end{lem}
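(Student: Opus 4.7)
The plan is to drop the nonnegative stray-field contribution to $E_\eta$ and then absorb the Dzyaloshinskii-Moriya (DMI) integrand into the three manifestly coercive pieces $|\partial_1 m|^2$, $|\tfrac{1}{\eta}\nabla' m|^2$, and $\F(m)=|m'|^2$. Two pointwise identities will drive the analysis: first, since $m\cdot \partial_j m=0$ one has $|\partial_j m\wedge m|=|\partial_j m|$; second, and sharper, $|(\partial_j m\wedge m)_k|\le |\partial_j m'|\,|m'|$ whenever $j,k\in\{2,3\}$, because in this case the expression only involves components of $m'$. These observations split the DMI sum $\tfrac{\rho}{d^2}\sum_{j,k}a_{jk}D_{j,k}(\partial_j m\wedge m)_k$ (with $a_{1k}=1$ and $a_{jk}=1/\eta$ for $j\in\{2,3\}$) into four groups tailored to the three hypotheses \eqref{regime}--\eqref{reg2}.

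\textbf{Four groups of DMI terms.} (i) The dominant term $D_{1,1}(\partial_1 m_2\,m_3-\partial_1 m_3\,m_2)$ involves only $m'$; combined with $\tfrac{\rho}{d^2}D_{1,1}\to -2\gamma$ and Young's inequality with parameter $|\gamma|$, it is bounded in absolute value by $(|\gamma|+o(1))(|\partial_1 m|^2+|m'|^2)$. (ii) For $D_{1,k}$ with $k\in\{2,3\}$, only $\tfrac{\rho|D_{1,k}|}{d^2}\ll 1$ is available, and the integrand now involves $m_1$; a direct Cauchy--Schwarz on $\Sigma_\eta$ would introduce an unusable factor $\sqrt{|\Sigma_\eta|}\sim\sqrt{L/\rho}$. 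I therefore integrate by parts in $X_1$: writing
\[
(\partial_1 m\wedge m)_2=\partial_1(m_1 m_3)-2\,m_3\,\partial_1 m_1,
\]
the divergence term has integral zero (for a.e.\ $X'\in B_1$ the slice $X_1\mapsto m_1 m_3$ belongs to $H^1(\m R)$ and vanishes outside $\Sigma_\eta$ since $m'\equiv 0$ there), while the remainder is pointwise bounded by $\tfrac{1}{2}(|\partial_1 m|^2+|m'|^2)$ with prefactor $o(1)$. (iii) For $D_{j,1}$, $j\in\{2,3\}$, the scalar $(\partial_j m\wedge m)_1$ again only involves $m'$, so $|\cdot|\le|\partial_j m'|\,|m'|$; routing $1/\eta$ into the derivative factor and applying Young gives $\tfrac{\rho|D_{j,1}|}{d^2}\bigl(|m'|^2+|\tfrac{1}{\eta}\partial_j m'|^2\bigr)$, i.e.\ $o(1)$ times the good terms. (iv) For $j,k\in\{2,3\}$ I use the crude bound $|\tfrac{1}{\eta}(\partial_j m\wedge m)_k|\le|\tfrac{1}{\eta}\partial_j m|$ together with Young at a small fixed $\varepsilon$; the residual constant integrated over $\Sigma_\eta$ (of measure $\sim L/\rho$) contributes $\tfrac{\pi}{\varepsilon}\bigl(\tfrac{\sqrt{\rho L}|D_{j,k}|}{d^2}\bigr)^2=o(1)$, precisely matching the $\sqrt{\rho L}/d^2$ scaling in the third line of \eqref{regime}.

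\textbf{Conclusion and main obstacle.} Summing the four groups, the DMI contribution is bounded in absolute value by $(|\gamma|+o(1))\int_\Sigma(|\partial_1 m|^2+|m'|^2)\,dX+o(1)\int_\Sigma|\tfrac{1}{\eta}\nabla' m|^2\,dX+o(1)$, so
\[
E_\eta(m)\ge\bigl(1-|\gamma|-o(1)\bigr)\int_\Sigma\bigl(|\partial_1 m|^2+|\tfrac{1}{\eta}\nabla' m|^2+|m'|^2\bigr)dX-o(1),
\]
and choosing $\eta_\gamma$ so small that the little-$o$ errors are dominated by $(1-|\gamma|)/2$ yields the claim with $C_\gamma=(1-|\gamma|)/2>0$. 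I expect group (ii) to be the main obstacle: the asymmetry between the short-wire condition $\tfrac{\rho}{d^2}|D_{1,k}|\ll 1$ and the long-wire scaling $\tfrac{\sqrt{\rho L}}{d^2}$ produced by a naive bound forces the $X_1$-integration-by-parts cancellation, which is the only non-routine ingredient in the argument.
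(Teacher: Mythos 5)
Your proof is correct and matches the paper's argument almost step for step: both drop the nonnegative stray field and absorb the DMI sum using the same four index groups, with the $X_1$-integration-by-parts cancellation for the $D_{1,2},D_{1,3}$ terms as the one non-routine ingredient (which you correctly single out). The only slip is a misprint in your preamble: the pointwise bound $|(\partial_j m\wedge m)_k|\le |\partial_j m'|\,|m'|$ holds when $k=1$ (for any $j$), not when $j,k\in\{2,3\}$, since for $k\in\{2,3\}$ the component $(\partial_j m\wedge m)_k$ involves $m_1$; your detailed groups (i) and (iii) in fact apply it only where it is valid, so the argument is unaffected.
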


\begin{proof}[Proof of Lemma \ref{lem:coercive}]
The idea is to absorb the DMI term  into the exchange and anisotropy term  in $E_\eta$ (the stray-field term is non-negative and doesn't play any role here). As $\frac{\rho}{d^2}D_{1,1}\to -2\gamma$ as $\eta\to 0$ and $|\gamma|<1$, there exists $\tilde C_\gamma>0$ such that for small $\eta>0$:
\[ |\partial_1 m'|^2+\frac{\rho}{d^2}D_{1,1} e_1\cdot \partial_1 m\wedge m+ |m'|^2\ge
\tilde C_\gamma (|\partial_1 m'|^2+|m'|^2). \]
Similarly, since $\frac{\rho}{d^2}(|D_{2,1}|+|D_{3,1}|)\to 0$ as $\eta\to 0$, we have
\begin{align*}
\frac{\rho}{d^2}\big|D_{2,1} e_1\cdot (\frac1\eta \partial_2 m)\wedge m\big| & \le o_\eta(1)(|\frac1\eta \partial_2 m'|^2+|m'|^2) \\
\frac{\rho}{d^2}\big|D_{3,1} e_1\cdot (\frac1\eta \partial_3 m)\wedge m\big| & \le o_\eta(1)(|\frac1\eta \partial_3 m'|^2+|m'|^2).
\end{align*}
In order to treat the term $e_2\cdot \partial_1 m\wedge m=\partial_1 m_3 m_1-\partial_1 m_1 m_3$, integration by parts and the fact that $m\in \M_\eta$ (in particular, $m_3=0$ away from a compact set) yield
\[ \int_\Sigma e_2\cdot \partial_1 m\wedge m\, dX=-2\int_\Sigma \partial_1 m_1 m_3\, dX; \]
therefore, one uses that $\frac{\rho}{d^2}|D_{1,2}|\ll 1$ yielding
\[ \bigg| \frac{\rho}{d^2} D_{1,2} \int_\Sigma e_2\cdot \partial_1 m\wedge m \, dX\bigg|\le o_\eta(1)
\int_\Sigma |\partial_1 m|^2+m_3^2 \, dX. \]
Similarly, $\frac{\rho}{d^2}|D_{1,3}|\ll 1$ yields
\[ \bigg| \frac{\rho}{d^2} D_{1,3} \int_\Sigma e_3\cdot \partial_1 m\wedge m \, dX\bigg|\le o_\eta(1)
\int_\Sigma |\partial_1 m|^2+m_2^2 \, dX. \]
The last terms we treat as follows: since $\frac{\sqrt{\rho L}}{d^2}(|D_{2,2}|+|D_{2,3}|+|D_{3,2}|+|D_{3,3}|)\to 0$, $m_3=0$ outside $\Sigma_\eta$ and $|\Sigma_\eta|=2\pi L/\rho$, we have
\[ \bigg| \frac{\rho}{d^2} D_{2,2} \int_\Sigma e_2\cdot (\frac1 \eta\partial_2 m)\wedge m \, dX\bigg|\le \frac{\rho}{d^2} |D_{2,2}| \int_{\Sigma_\eta} \big| \frac1 \eta\partial_2 m \big|\, dX=o_\eta(1) \|\frac1 \eta \partial_2 m\|_{L^2(\Sigma)} \]
and the same type of estimates hold for the terms in $D_{2,3}$, $D_{3, 2}$ and $D_{3,3}$.
Summing up, we deduce the existence of a constant $\tilde c_\gamma>0$ such that
\[ E_\eta(m)\ge \tilde c_\gamma \int_{\Sigma} |\partial_1 m|^2+|\frac 1{\eta}\nabla' m|^2+|m'|^2\, dX
-o_\eta(1) \|\frac1 \eta \nabla' m\|_{L^2(\Sigma)} \]
so the conclusion follows immediately. 
\end{proof}

\bigskip

\begin{proof}[Proof of Theorem \ref{thm:gamma}]
We divide the proof in several steps:

\medskip

\nd \emph {Step 1. Compactness.} Assume that $m_\eta\in \M_\eta$ with $\limsup_{\eta\to 0} E_\eta(m_\eta)<\infty$. By Lemma \ref{lem:coercive}, 
\be
\label{coerci}
\int_{\Sigma} |\partial_1 m_\eta|^2+|\frac 1{\eta}\nabla' m_\eta|^2+|m'_\eta|^2\, dX\le C
\ee
for some $C>0$ independent of $\eta$. Therefore, for a subsequence still denoted $\eta\to 0$, there is a limit $m_0\in \dot{H}^1(\Sigma)$ such that $m_\eta\rightharpoonup m_0$ weakly in $\dot{H}^1(\Sigma)$, strongly in $L^2_{\loc}(\Sigma)$ and a.e. in $\Sigma$, and $m'_\eta\rightharpoonup m'_0$ weakly in $L^2(\Sigma)$. Moreover, $|m_0|=1$ in $\Sigma$ (due to the a.e. convergence) and $\nabla'm_0=0$ in $\Sigma$ (because $\nabla' m_\eta\to 0$ in $L^2(\Sigma)$ as $\eta\to 0$), thus, $m_0$ depends only on $X_1$.  We conclude that
$m_0\in \M_0$. 

\medskip

\nd \emph{Step 2. Lower bound.} Let $m_\eta\in \M_\eta$ and $m_0\in \M_0$ with $m_\eta\rightharpoonup m_0$ weakly in $\q H^1(\Sigma)$ as $\eta\to 0$. For a subsequence, we may assume that $E_\eta(m_\eta)\to \liminf_{\eta\to 0} E_\eta(m_\eta)$ as $\eta\to 0$ and 
$\liminf_{\eta\to 0} E_\eta(m_\eta)<\infty$ (otherwise, the desired inequality is trivially satisfied). 
By Lemma \ref{lem:coercive}, we know that \eqref{coerci} holds true. 
We note that
\[ |\partial_1 m'_\eta|^2 -2\gamma e_1\cdot \partial_1 m_\eta\wedge m_\eta+\gamma^2 |m'_\eta|^2=(\partial_1 m_{\eta,2}-\gamma m_{\eta,3})^2+(\partial_1 m_{\eta,3}+\gamma m_{\eta,2})^2. \]
Using the weak lower semicontinuity of the $L^2$-norm, since $\nabla m_\eta\rightharpoonup \nabla m_0$ and $m'_\eta\rightharpoonup m'_0$ weakly in $L^2(\Sigma)$, we obtain
\begin{align*}
\liminf_{\eta\to 0} &\int_\Sigma |\partial_1 m'_\eta|^2-2\gamma e_1\cdot \partial_1 m_\eta\wedge m_\eta+\gamma^2 |m'_\eta|^2\, dX\\
&\ge \int_\Sigma |\partial_1 m'_0|^2-2\gamma e_1\cdot \partial_1 m_0\wedge m_0+\gamma^2 |m'_0|^2\, dX
\end{align*}
as well as
\[ \liminf_{\eta\to 0} \int_\Sigma |\partial_1 m_{\eta,1}|^2+(1-\gamma^2)|m'_\eta|^2\, dX\ge \int_\Sigma |\partial_1 m_{0,1}|^2+(1-\gamma^2)|m'_0|^2\, dX \] 
because $\gamma^2<1$.
The remaining terms in $E_\eta$ are either non-negative, or involve the DMI. For DMI, we use \eqref{coerci} and the proof of Lemma \ref{lem:coercive} to deduce that the terms involving $D_{1,2}$, $D_{1,3}$, $D_2$ and $D_3$ converge to $0$ as $\eta\to 0$
in the regime \eqref{regime} \& \eqref{reg2}. Finally, as $\frac{\rho D_{1,1}}{d^2}\to -2\gamma$, we get again by \eqref{coerci}:
\[ \bigg|\big(\frac{\rho}{d^2}D_{1,1}+2\gamma\big)  \int_\Sigma e_1\cdot \partial_1 m_\eta\wedge m_\eta \, dX\bigg|=o_\eta(1)  \int_\Sigma |\partial_1 m'_\eta|^2+|m_\eta'|^2\, dX\to 0 \quad \textrm{as} \quad \eta\to 0. \]
Summing up, we obtain the desired lower bound.

\medskip

\nd \emph{Step 3. Upper bound.} Let $x$ be the first coordinate $x=X_1$ and $m=m(x)\in \M_0$, in particular, $\|m\|_{{\mathcal H}^1(\Sigma)}<\infty$. For small $\e>0$, we show that for every $0<\eta\le \eta_\e$ there exists $m_\eta\in \M_\eta$ such that
$\|m-m_\eta\|_{{\mathcal H}^1(\Sigma)}=o_\e(1)$ and $\big|E_\eta(m_\eta)-E_0(m)\big|=o_\e(1)$. For this, fix some $\e\in (0, \frac 1{100})$. Note that in the regime $L/\rho\to \infty$ as $\eta\to 0$, due to Lemma \ref{lem:infinit}, we may assume that for two limits $a^\pm\in \{\pm 1\}$, $m_1\to a^\pm$ and $m'\to (0,0)$ as $x\to \pm \infty$, so that
\be
\label{assum1}
\int_{\R\setminus [-\frac L \rho+1, \frac L \rho-1]} |\partial_1 m|^2+|m'|^2\, dx\le \e, \, \textrm{ and }\, |m'(x)|\le \e \textrm{ for } |x|\ge \frac L \rho-1 
\ee
and 
\be
\label{con_m1} |m_1-a^\pm|\le \e^2 \textrm{ for } \pm x\ge \frac L \rho-1.
\ee
We choose $m_\eta=m_\eta(x):x\in\R\to \m S^2$ such that $m_\eta:=m$ on $(-\frac L \rho+1, \frac L \rho-1)$, $m_\eta(x)=(a^\pm,0,0)$ if $\pm x\ge \frac L \rho$ and $m_\eta$ is the (unique) continuous function in $\R$ extended by linear interpolation in the spherical coordinates $\theta$ (keeping the other spherical coordinate $\f$ fixed) inside $[-\frac L \rho, \frac L \rho]\setminus [-\frac L \rho+1, \frac L \rho-1]$. In other words, $m_\eta([\frac L \rho-1, \frac L \rho])$ (resp. $m_\eta([-\frac L \rho, -\frac L \rho+1])$) is the geodesic on $\m S^2$ between 
$m(\frac L \rho-1)$ and $a^+$ (resp. $m(-\frac L \rho+1)$ and $a^-$). Due to \eqref{con_m1}, the slope of the spherical coordinate $\theta_\eta$ of $m_\eta$ is of order $o_\e(1)$ inside $[-\frac L \rho, \frac L \rho]\setminus [-\frac L \rho+1, \frac L \rho-1]$, so
\be
\label{interm_en}
\int_{[-\frac L \rho, \frac L \rho]\setminus [-\frac L \rho+1, \frac L \rho-1]} |\partial_1 m_\eta|^2+|m'_\eta|^2\, dx=o_\e(1).
\ee
Therefore, using \eqref{assum1} and \eqref{interm_en}, we deduce that $\|m-m_\eta\|_{{\mathcal H}^1(\Sigma)}=o_\e(1)$ and
\[ \int_{\Sigma} |\partial_1 m_\eta|^2+|\frac 1{\eta}\nabla' m_\eta|^2 + |m'_\eta|^2\, dX=\int_{\Sigma} |\partial_1 m|^2+|m'|^2\, dX+o_\e(1). \]
As $m_\eta$ depends only on $x=X_1$, the only nonzero DMI term in $E_\eta(m_\eta)$ involves $D_1$. The above estimates and the regime \eqref{regime}\& \eqref{reg2} imposed on $D_1=(D_{1,1}, D_{1,2}, D_{1,3})$ yield
\[ \frac\rho{d^2}\int_\Sigma D_1\cdot \partial_1  m_\eta\wedge m_\eta\, dX=-2\gamma \int_\Sigma e_1\cdot \partial_1 m
\wedge m\, dX+o_\e(1)+o_\eta(1). \]
We prove that the stray-field energy associated to $m_\eta$ in $E_\eta(m_\eta)$ vanishes as $\eta\to 0$. As $\Sigma_\eta=(-\frac L\rho, \frac L\rho)\times B_1$, by \eqref{stray_l2}, we have for the stray-field potential $U_\eta$ associated to $m_\eta$:
\[ \int_{\R^3} \big|(\partial_1, \frac1 \eta \nabla')U_\eta\big|^2\, dX\le \frac1{\eta^2} 
\int_{\Sigma_\eta} |m_\eta|^2\, dX\le \frac{2\pi L}{\rho \eta^2}. \]
As $\frac{R^2}{d^2} \frac{L}{\rho \eta^2}=\frac{\rho L}{d^2}\to 0$ as $\eta\to 0$, summing up the above estimates, we conclude that $\big|E_\eta(m_\eta)-E_0(m)\big|=o_\e(1)+o_\eta(1)$ which is in fact $o_\e(1)$ for $\eta$ small enough.
\end{proof}

$\Gamma$-convergence results are also known in some other asymptotic regimes of nanowire where the stray-field contribution is still present in the $\Gamma$-limit functional, though as a local term (see e.g. \cite{SlaSon, KK_these, Kuh09, CL06b}).

\subsection{Convergence of minimisers}

As a classical consequence of the $\Gamma$-convergence me\-thod, we deduce in the following the convergence of minimisers of the $3D$ micromagnetic energy $E_\eta$ in \eqref{defEe} to minimisers of the reduced energy $E_0$ in \eqref{defE0} when a transition from $-e_1$ to $e_1$ is imposed for the configurations in $\M_\eta$ and $\M_0$, respectively. The non\-stan\-dard part of the result is to insure the compactness of this boundary condition which is based on the compactness result proved in \cite[Lemma 1]{DIO} for sign-changing configurations.

\begin{cor}
In the regime \eqref{regime}\& \eqref{reg2}, let $m_\eta$ be a minimiser of $E_\eta$ over the set $\{m\in \M_\eta\, :\, m(x_1, \cdot)=\pm e_1 \textrm{ for } \pm x_1\ge L/\rho\}$. Then for a subsequence $\eta\to 0$, there exists $x_{1,\eta}\in (- L/\rho, L/\rho)$ such that $m_\eta(\cdot+x_{1,\eta} e_1)
\rightharpoonup m_0$ weakly in $\q H^1(\Sigma)$ where $m_0\in \M_0$ is a minimiser of $E_0$ over the set $\{m\in \M_0\, :\, m(\pm \infty)=\pm e_1\}$ and $e_1\cdot m_0(0)=0$.
\end{cor}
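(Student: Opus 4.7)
The plan is to apply the fundamental theorem of $\Gamma$-convergence (Theorem \ref{thm:gamma}) but with the twist that the transition constraint at $\pm\infty$ is not closed under weak $\q H^1(\Sigma)$ convergence on the unbounded cylinder; it must be recovered by choosing a suitable translation and by invoking the compactness result \cite[Lemma 1]{DIO} for sign-changing profiles. The starting point is a uniform energy bound: picking any admissible $\bar m \in \M_0$ with $\bar m(\pm\infty) = \pm e_1$ (for instance $w_*^+$ from Theorem \ref{thm_DM}), the recovery sequence produced in Step~3 of the proof of Theorem \ref{thm:gamma} takes the values $\pm e_1$ on $\pm x_1 \ge L/\rho$ by construction, hence is admissible in the minimisation defining $m_\eta$, yielding $E_\eta(m_\eta) \le E_\eta(\bar m_\eta) \to E_0(\bar m) < \infty$. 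Existence of $m_\eta$ at each fixed $\eta$ is obtained by the direct method, using Lemma \ref{lem:coercive} for weak $\q H^1(\Sigma)$ compactness of minimising sequences together with the convexity or weak lower semicontinuity of each term in $E_\eta$; the same lemma upgrades the uniform energy bound into a uniform $\q H^1(\Sigma)$ bound on $m_\eta$.

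The translation is then fixed by the intermediate value theorem: the map $x_1 \mapsto \frac{1}{|B_1|}\int_{B_1} m_{\eta,1}(x_1, X')\,dX'$ is continuous (by the $H^1_{\loc}$ regularity of $m_\eta$ in $x_1$), equals $-1$ at $x_1 = -L/\rho$ and $+1$ at $x_1 = L/\rho$, so it vanishes at some $x_{1,\eta} \in (-L/\rho, L/\rho)$. Set $\tilde m_\eta(X) := m_\eta(X + x_{1,\eta} e_1)$; by Theorem \ref{thm:gamma}(1), a subsequence satisfies $\tilde m_\eta \weak m_0$ weakly in $\q H^1(\Sigma)$ with $m_0 \in \M_0$, and strong $L^2_{\loc}(\Sigma)$ convergence combined with trace continuity in $x_1$ propagates the normalisation into $e_1 \cdot m_0(0) = 0$.

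The main obstacle is to verify $m_0(\pm\infty) = \pm e_1$, since the translated interval $(-L/\rho - x_{1,\eta}, L/\rho - x_{1,\eta})$ may degenerate on one side as $\eta \to 0$ and the hard boundary condition does not a priori pass to the limit. As $m_0 \in \M_0$ has $E_0(m_0) < \infty$ (by the $\Gamma$-liminf inequality of Theorem \ref{thm:gamma}(2) together with Step~1), Lemma \ref{lem:infinit} guarantees that $m_{0,1}(\pm\infty) \in \{\pm 1\}$, so one only has to rule out that both limits coincide; this is precisely what \cite[Lemma 1]{DIO} delivers, starting from the uniform energy bound and the fact that $\tilde m_{\eta,1}$ attains the values $-1$ and $+1$ together with the normalisation $e_1 \cdot m_0(0) = 0$ at $x_1 = 0$, preventing the transition from being lost at infinity. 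Minimality is then standard: for any competitor $n_0 \in \M_0$ with $n_0(\pm\infty) = \pm e_1$, Theorem \ref{thm:gamma}(3) produces a recovery sequence $n_\eta \in \M_\eta$ admissible in the minimisation for $m_\eta$, and combining $E_\eta(m_\eta) \le E_\eta(n_\eta)$ with the $\Gamma$-liminf inequality applied to $\tilde m_\eta \weak m_0$ yields $E_0(m_0) \le E_0(n_0)$.
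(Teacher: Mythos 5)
Your overall architecture is correct: uniform energy bound via a competitor, existence of $m_\eta$ by the direct method and Lemma \ref{lem:coercive}, uniform $\dot H^1(\Sigma)$ bound, translation by a zero of the averaged first component, compactness via Theorem \ref{thm:gamma}, identification $m_0(\pm\infty)\in\{\pm e_1\}$ via Lemma \ref{lem:infinit}, and minimality by sandwiching between the $\Gamma$-liminf and a recovery sequence. But the way you handle the translation contains a genuine gap.

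You select $x_{1,\eta}$ by the intermediate value theorem as \emph{some} zero of $\bar m_{\eta,1}(x_1):=\frac{1}{\pi}\int_{B_1} m_{\eta,1}(x_1,X')\,dX'$, and only afterwards invoke \cite[Lemma 1]{DIO} as if it would confirm $m_0(\pm\infty)=\pm e_1$ once the normalisation $m_{0,1}(0)=0$ is in hand. This inverts the role of the lemma. Having $m_{0,1}(0)=0$ together with $m_{0,1}(\pm\infty)\in\{\pm 1\}$ does \emph{not} force $m_{0,1}(\pm\infty)=\pm 1$: the limit could dip through $0$ and return to the same pole on both sides (the transition escaping to infinity). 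The function $\bar m_{\eta,1}$ may have several zeros, and an IVT-selected one may be exactly the "wrong" one for which the weak limit is a degenerate profile. The content of \cite[Lemma 1]{DIO} is precisely to \emph{produce} a particular zero $x_{1,\eta}$ such that, along a subsequence, $\bar m_{\eta,1}(\cdot+x_{1,\eta})$ converges weakly in $\dot H^1(\RR)$ and locally uniformly to a limit $u$ with $u(0)=0$ \emph{and} the sign constraints $\limsup_{x_1\to-\infty}u\le 0$, $\liminf_{x_1\to+\infty}u\ge 0$. It is these one-sided sign constraints, combined with $u(\pm\infty)\in\{\pm 1\}$ from Lemma \ref{lem:infinit}, that yield $u(\pm\infty)=\pm 1$. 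So you should \emph{replace} your IVT selection by an invocation of \cite[Lemma 1]{DIO} applied to the cross-sectional averages $\bar m_{\eta,1}$ (a 1D sequence, bounded in $\dot H^1(\RR)$ by Lemma \ref{lem:coercive}, sign-changing, with $\bar m_{\eta,1}(\pm L/\rho)=\pm 1$), and then transfer $u=\bar m_{0,1}=m_{0,1}$ to the limit via the trace theorem. As a smaller point, your phrase \emph{"strong $L^2_{\loc}$ convergence combined with trace continuity"} for proving $m_{0,1}(0)=0$ is not enough by itself; one needs the locally uniform convergence of $\bar m_{\eta,1}(\cdot+x_{1,\eta})$ to $u$, which again comes from the $\dot H^1(\RR)$ bound on the averages and \cite[Lemma 1]{DIO}.
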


\begin{proof}
We divide the proof in two steps:

\medskip

\nd \emph {Step 1. Existence of minimisers of $E_\eta$ connecting $-e_1$ to $e_1$.}  For fixed $\eta>0$, the existence of a minimiser $m_\eta$ of $E_\eta$ over the set $\{m\in \M_\eta\, :\, m(x_1, \cdot)=\pm e_1 \textrm{ for } \pm x_1\ge L/\rho\}$ is a consequence of the direct method in the calculus of variations. Indeed, by Lemma \ref{lem:coercive} applied at fixed $\eta>0$, the energy $E_\eta$ is coercive with respect to the $H^1(\Sigma_\eta)$-norm, therefore any minimising sequence for the energy $E_\eta$ satisfying the boundary condition $\pm e_1$ at $x_1=\pm L/\rho$ converges weakly in 
$H^1(\Sigma_\eta)$ (for a subsequence) to a minimiser $m_\eta$ of $E_\eta$ with the same boundary condition (by the trace embedding $H^1(\Sigma_\eta) \hookrightarrow H^{1/2}(\partial \Sigma_\eta)$). 

\medskip

\nd \emph {Step 2. Convergence of minimisers $m_\eta$ as $\eta\to 0$.} For a sequence $\eta\to 0$ and minimisers $m_\eta$ as above, we denote the average of $m_\eta$ over the cross-section $\{x_1\}\times B_1$ at each $x_1$:
\[ \bar m_\eta(x_1)=\frac1{\pi}\int_{B_1} m_\eta(x_1, x_2, x_3)\, dx_2dx_3, \quad x_1\in \R. \]
By the upper bound in Theorem \ref{thm:gamma}, any fixed $1D$ transition on $\mathbb{S}^2$ between $\pm e_1$ on the interval $(-1,1)$ (extended continuously by a constant on $(-L/\rho, L/\rho)\setminus (-1,1)$) leads to a uniform bound for $\{E_\eta(m_\eta)\}_{\eta\to 0}$ in the regime \eqref{regime}\& \eqref{reg2}. By the coercivity in Lemma \ref{lem:coercive}, we deduce that $\{m_\eta\}$ as well as $\{\bar m_\eta\}$ are bounded in $\dot{H}^1(\Sigma)$. Since $e_1\cdot \bar m_\eta(\pm \infty)=\pm 1$, by the compactness result \cite[Lemma 1]{DIO}, we know that there exists a zero $x_{1,\eta}\in (- L/\rho, L/\rho)$ of $e_1\cdot \bar m_\eta$ such that for a subsequence $\eta\to 0$, $e_1\cdot \bar m_\eta(\cdot+x_{1,\eta}) \rightharpoonup u$ weakly in $\dot{H}^1(\R)$ and locally uniformly in $\R$ with $u(0)=0$ and
\[ \limsup_{x_1\to -\infty} u(x_1)\le 0\quad \textrm{and}\quad \liminf_{x_1\to +\infty} u(x_1)\ge 0. \]
Passing eventually to a further subsequence, we also have by Theorem \ref{thm:gamma} that $m_\eta(\cdot+x_{1,\eta} e_1)\rightharpoonup m_0$ weakly in $\q H^1(\Sigma)$ for a limit $m_0\in \M_0$. By the trace theorem applied at the cross-section $\{x_1\}\times B_1$ at each $x_1\in \R$, we deduce that $e_1\cdot \bar m_0=e_1\cdot m_0=u$ in $\R$ (as $m_0$ depends only on $x_1$). By Lemma \ref{lem:infinit}, we know that $e_1\cdot m_0(\pm \infty)\in \{\pm 1\}$; by the sign constraint on $u$ at $\pm \infty$, we obtain that $e_1\cdot m_0(\pm \infty)=\pm 1$, i.e., $m_0(\pm \infty)=\pm e_1$. We conclude by Theorem \ref{thm:gamma} that $m_0$ is a minimiser of $E_0$ over the set $\{m\in \M_0\, :\, m(\pm \infty)=\pm e_1\}$.
\end{proof}

\section{Domain walls. Proof of Theorem \ref{thm_DM} and Corollary \ref{cor:precessing_dm}}

The gradient of the energy $E$ at a map $m$ is given by  
\[ \delta E(m) = - \partial_{xx} m +2\ga (\partial_x m_3 e_2-\partial_x m_2 e_3)+m_2 e_2+m_3 e_3.\]
Therefore, imposing the constraint that $m$ takes values into $\m S^2$, such a critical point $m$ of $E$ is collinear to the gradient $\delta E(m)$, and so $m$ satisfies the Euler-Lagrange equation \eqref{eq:dm_critical}:
\[ m\wedge \delta E(m)=0. \] 

\smallskip

\nd {\it Spherical coordinates when $m$ avoids the poles}. Sometimes we use the following frame adapted to a $\m S^2$-valued magnetisation $m$. For that, assume $m$ can be written as in \eqref{spheric_coord} using spherical coordinates $(\varphi, \theta)$. We will denote
\be
\label{np}
n=\partial_\theta m=-\frac1{\sin \theta}m\wedge (e_1\wedge m) \quad \textrm{ and } \quad p=\frac1{\sin \theta} \partial_\varphi m=\frac1{\sin \theta} e_1\wedge m=m\wedge n,
\ee
so that $(m,n,p)$ is an orthonormal frame in $\R^3$. However, the writing \eqref{spheric_coord} in spherical coordinates can be used only in an open interval $I$ where $m$ does not touch the poles $\pm e_1$. Typically, if $m \in H^1_{\text{loc}}(I, \m S^2)$ then $m$ is continuous in $I$ and we assume 
\[ m(I)\subset \m S^2\setminus \{\pm e_1\} \]
 (in particular $m_1(I)\in (-1,1)$). Since $\arccos:(-1,1)\to (0, \pi)$ is a diffeomorphism, there exists a unique continuous map $\theta:I\to (0, \pi)$ such that 
\[ m_1=\cos \theta \quad \text{in } I, \quad \theta \in H^1_{\text{loc}}(I) \quad \text{and} \quad \partial_x \theta=-\frac{\partial_x m_1}{\sqrt{1-m_1^2}} \quad \text{a.e. in } I. \]
Furthermore, as $\sin \theta>0$ in $I$, the map $\ds \frac1{\sin \theta}(m_2, m_3):I\to \m S^1$ is continuous in the interval $I$; therefore, there exists a continuous lifting $\varphi:I\to \m R$ (unique up to an additive $2\pi \Z$ constant) such that \eqref{spheric_coord} holds true in $I$, 
\[ \varphi \in H^1_{\loc}(I) \quad \text{and} \quad \partial_x \varphi=\frac1{1-m_1^2}
\partial_x m \cdot (e_1\wedge m) \quad \text{a.e. in } I. \]
In particular, the density of the energy $E$ writes a.e. in $I$:
\begin{equation} \label{energy_spherical:bounds}
|\partial_x m|^2 + 2\ga \partial_x m \cdot (e_1\wedge m)+ (1-m_1^2)=(\partial_x \theta)^2+\sin^2 \theta \big((\partial_x \varphi + \gamma)^2+1-\ga^2\big).
\end{equation}
Note that if in addition $m\in H^2_{\loc}(I, \m S^2)$, then $\theta, \varphi \in H^2_{\loc}(I)$.
In spherical coordinates $(\varphi, \theta)$, under the same constraint that $m(I)\subset \m S^2\setminus \{\pm e_1\}$ in an interval $I$, the Euler-Lagrange equation can be written in $I$ as 
\be
\label{number12}
\begin{cases}
\partial_x \big(\sin^2 \theta (\partial_x \varphi + \gamma) \big)=0 \\
\partial_{xx} \theta- \sin \theta \cos \theta \big((\partial_x \varphi + \gamma)^2+1-\ga^2\big)=0.
\end{cases}
\ee
However, in the proof of  Theorem \ref{thm_DM} below, we cannot take for granted that a solution to \eqref{eq:dm_critical} does not touch the poles. We instead take a different path which completely avoids such a discussion. The above writing is nonetheless useful later, e.g., in the proof of Corollary \ref{cor:precessing_dm}.

\begin{proof}[Proof of  Theorem \ref{thm_DM}]

We divide the proof in several steps:

\medskip

\nd {\it Step 1: Applying a special rotation $R_\phi$ to $m$.} We start by writing the Euler-Lagrange equation\footnote{Due to the Lagrange multiplier $\lambda(x) m$, we can replace $m_2e_2+m_3e_3$ by $-m_1 e_1$ in \eqref{ode_critic}, and we derive the equation for $\lambda(x)$ by doing the (pointwise) scalar product of \eqref{ode_critic} with $m$.}
\begin{align}
\label{ode_critic}
&-\partial_{xx} m+2\ga (\partial_x m_3 e_2-\partial_x m_2 e_3)-m_1 e_1=\lambda(x) m,\\ 
\nonumber
\textrm{with } \quad &\lambda(x)=|\partial_x m|^2+2\ga\partial_x m \cdot (e_1\wedge m)-m_1^2.
\end{align}
As $E(m)<\infty$, in particular, $m\in H^1_{\loc}(\m R, \m S^2)$ is continuous and $\lambda\in L^1_{\loc}(\m R)$, we deduce by a standard bootstrap argument that $m$ and $\lambda$ are smooth in $\R$.
We set 
\[ \phi(x):=-\gamma x, \quad M:=R_{-\phi}m. \]
In particular, $M\in \q C^\infty$.

\medskip

\nd {\it Step 2: Computing the energy $E(m)$ and the equation \eqref{ode_critic} in terms of $M$.} As $m=R_{\phi}M$, $\partial_x \phi=-\ga$ and $\partial_{xx}\phi=0$, we compute 
\begin{align*}
 \partial_x m& =R_\phi \partial_x M+\partial_x(R_\phi)M=R_\phi \big(\partial_x M-\ga e_1\wedge M\big), \\
\partial_x m\cdot (e_1\wedge m)& =(\partial_x M-\ga e_1\wedge M)\cdot (e_1\wedge M)=\partial_x M \cdot (e_1\wedge M) -\ga |e_1\wedge M|^2, \\
\lambda(x) & = |\partial_x M|^2-\ga^2 |e_1\wedge M|^2-M_1^2, \\
\partial_{xx} m & = R_\phi \big(\partial_{xx} M-2\ga e_1\wedge \partial_x M-\ga^2 M\big)+\ga^2 M_1 e_1.
\end{align*}
Therefore, we deduce that
\[ E(m) = \frac 1 2 \int_{\m R} |\partial_x M|^2+(1-\ga^2) |e_1\wedge M|^2 dx, \]
\be
\label{EL_in_M}
-\partial_{xx} M-(1-\ga^2)M_1e_1=\Lambda(x) M, \quad \Lambda(x)=|\partial_x M|^2-(1-\ga^2)M_1^2.
\ee

\medskip

\nd {\it Step 3: We prove that $M$ is a planar transition between $\pm e_1$; that is, there exists a rotation $R_{\tilde \phi}$ such that $R_{\tilde \phi} M$ takes values into the horizontal circle $\m S^1:=\m S^1\times \{0\}\subset \m S^2$. } 
First, we show that 
\be
\label{cte=0}
M_2 \partial_x M_3-M_3\partial_x M_2=0 \quad \textrm{ in } \quad \m R.
\ee
Indeed, since $M\in \q C^\infty$, we compute
\begin{align*}
\partial_x (M_2 \partial_x M_3-M_3\partial_x M_2)&=M_2 \partial_{xx} M_3-M_3\partial_{xx} M_2\\
&\stackrel{\eqref{EL_in_M}}{=}-M_2(\Lambda(x)M_3)+M_3(\Lambda(x)M_2)=0,
\end{align*}
yielding $M_2 \partial_x M_3-M_3\partial_x M_2$ is constant in $\m R$. Since $|\partial_x M|^2, 1-M_1^2=M_2^2+M_3^2 \in L^1(\m R)$ (because $\ga^2<1$ and $E(m)<\infty$), there exists a sequence $x_n\to \infty$ such that 
\[ M_2(x_n), \partial_x M_2(x_n), M_3(x_n), \partial_x M_3(x_n) \to 0 \quad \textrm{ as }  n\to \infty \]
which proves \eqref{cte=0}. This implies that the two vector fields $(M_2, \partial_x M_2)$ and  $(M_3, \partial_x M_3)$ in $\m R^2$ 
are collinear in every point $x$, the collinearity factor could depend a-priori on $x$. However, by \eqref{ode_critic}, these two vector fields solve the same first order linear ODE system in $(u,v)$, i.e., 
\[ \partial_x u=v, \, \partial_x v=-\Lambda(x) u; \]
therefore, by uniqueness in the Cauchy-Lipschitz theorem, the collinearity factor is constant (in $x$), i.e., there exists $\beta\in \m R$ such that
$M_2=\beta M_3$ (or $M_3=\beta M_2$, respectively) in $\m R$. Choosing $\tilde \phi$ such that 
$\cot \tilde \phi=\beta$ (or $\tan \tilde \phi=\beta$, respectively), 
we conclude that the 3rd component of $R_{-\tilde \phi} M$ (in the direction $e_3$) vanishes\footnote{Such an in-plane transition $M$ is called Bloch wall of $180^\circ$. A different type of Bloch wall is studied in \cite{IgnMer}.}.

\medskip

\nd {\it Step 4: Uniqueness (up to translations) of finite energy critical points $M$ in \eqref{EL_in_M} that take values in $\m S^1$ with $M(\pm \infty)=\pm e_1$.} First, every finite energy map $M:\m R\to \m S^1$ (in particular, $M$ is continuous), writes as $M=(\cos \theta, \sin \theta, 0)$ for a continuous lifting $\theta:\m R\to \m R$ (that is unique up to an additive  constant in $2\pi \m Z$); moreover, we compute
\[ E(m) = \frac 12 \int_{\m R} |\partial_x M|^2+(1-\ga^2) |e_1\wedge M|^2\, dx= \frac 12 \int_{\m R} |\partial_x \theta|^2+ (1-\ga^2)\sin^2 \theta\, dx. \]
For such a critical point $M=(\cos \theta, \sin \theta, 0)$, \eqref{EL_in_M} writes as  
\be
\label{crit_theta}
-\partial_{xx} \theta+(1-\ga^2)\sin \theta \cos \theta=0 \quad \textrm{ in } \m R.
\ee
Moreover, as $\theta$ is continuous, it follows that  $\theta\in \q C^\infty$. Multiplying the equation by $\partial_x \theta$, we deduce that 
$-(\partial_{x} \theta)^2+(1-\ga^2)\sin^2 \theta$ is constant in $\m R$. In fact, this constant is zero: indeed, as  $E(m)<\infty$, there exists a sequence $x_n\to \infty$ such that $\partial_x \theta(x_n), \sin \theta(x_n)\to 0$ as $n\to \infty$. In other words, the trajectory $\{X(x)=\big(\theta(x), \partial_x \theta(x)\big)\}_{x\in \m R}$ is included in the zero set of the Hamiltonian
\[ \Ham(X_1, X_2)= \frac 1 2 \big(X_2^2-(1-\ga^2)\sin^2(X_1)\big), \quad X=(X_1, X_2)\in \m R^2. \]
We denote the zero set of $\Ham$ as $Z^-\cup Z^0\cup Z^+$ with 
\[ Z^\pm=\set{(X_1, X_2)}{\pm X_2>0, \, \Ham(X_1, X_2)=0} \] 
and 
\[ Z^0=\set{(X_1, 0)}{\Ham(X_1, 0)=0}=\pi \m Z\times \{0\}. \]
It is readily seen that
any connected component of $Z^+$ and $Z^-$ ends at two consecutive points
of $Z^0$.
Obviously, any point in $Z^0$ is a stationary solution of the dynamical system 
\[ \partial_x X=V(X) \quad \text{with} \quad  V(X) = \left( \frac{\partial \Ham}{\partial X_2},-\frac{\partial \Ham}{\partial X_1} \right)=(X_2, (1-\ga^2)\sin X_1\cos X_1). \]
Therefore, by uniqueness in the Cauchy-Lipschitz theorem for the above system, the trajectory $\{X(x)=(\theta(x), \partial_x\theta(x))\}_{x\in \R}$ begins and ends at two consecutive points in $Z^0$; in particular, the total rotation of a critical point $\theta$ is given by $\ds \int_\R \partial_x \theta \, dx=\pm \pi$. (We refer to \cite{IgUniq} for the uniqueness of domain walls in a generalised model leading to a weighted pendulum equation.)

\medskip

\nd {\it Step 5: Conclusion.} Up to a translation $\tau_y$, we can assume in \eqref{crit_theta} that $M_1(0)=0$ (because $M_1(\pm \infty)=\pm 1$ and $M_1$ is continuous). Furthermore, as $M_1=\cos \theta$, we can assume that $\theta(0)\in \{\pm \frac \pi 2\}$ (up to an additive constant in $2\pi \m Z$).

\begin{itemize}

\item if $\theta(0)=-\frac\pi 2$, then the structure of the zero set of $\Ham$ combined with the boundary condition implies that
$\partial_x \theta>0$ (so, the trajectory $X(x)\in Z^+$ for all $x\in \R$) and $\partial_x \theta=- \sqrt{1-\ga^2}\sin \theta$ with $\theta(-\infty)=-\pi$ and $\theta(\infty)=0$. The uniqueness in the Cauchy-Lipschitz theorem implies $\theta=-\theta_*$ given in \eqref{formul1}. In this case, the critical point $m$ writes as $m=g.w_*^-$ with $g=(y, \tilde \phi)$ with $\tilde \phi$ defined in Step 3.

\item if $\theta(0)=\frac\pi 2$, by the same argument it follows that $\partial_x \theta<0$ (so, the trajectory $X(x)\in Z^-$ for all $x$) and $\partial_x \theta=- \sqrt{1-\ga^2} \sin \theta$ with $\theta(-\infty)=\pi$ and $\theta(\infty)=0$, i.e., $\theta=\theta_*$. In this case, the critical point $m$ writes as $m=g.w_*^+$ with $g=(y, \tilde \phi)$.
\end{itemize}

It remains to prove \eqref{345} below. \label{pagi} If $n^\pm_*$ and $p^\pm_*$ are the quantities defined in \eqref{np} for  $m=w^\pm_*$ with the spherical coordinates $(\varphi_*,\pm \theta_*)$, then we have
\be
\label{345}
\partial_x w^\pm_*=\partial_x (\pm \theta_*) n^\pm_*+\sin(\pm \theta_*) \partial_x \varphi_* p^\pm_*=\sqrt{1-\ga^2} w^\pm_* \wedge(e_1\wedge w^\pm_*)-\ga e_1\wedge w^\pm_*. \quad \quad \qedhere
\ee
\end{proof}

\begin{nb} \label{rem:several} The above proof shows in particular the nonexistence of planar critical points $M=(\cos \theta, \sin \theta, 0)$ in \eqref{crit_theta} with a total angle transition $|\theta(+\infty)-\theta(-\infty)|>\pi$. This leads to the nonexistence of critical points in \eqref{ode_critic} of the form $R_\phi M$ with values in $\m S^2$ with a total angle transition $\theta$ larger than $\pi$. We highlight that such planar critical points do exist if a nonlocal term is added in \eqref{crit_theta}; for example, the existence of the so-called N\'eel walls of angle higher than $\pi$ is proved by Ignat-Moser \cite{IM_JDE, IM_Adv}. For $\Gamma$-convergence results on N\'eel walls at the first and second order, we refer to \cite{Ign, IM_ARMA}. 
\end{nb}

\bigskip

We conclude this section with the proof of Corollary \ref{cor:precessing_dm}:

\begin{proof}[Proof of Corollary \ref{cor:precessing_dm}] 

First let us notice that as $E$ is invariant with respect to the action of the group $G$, 
\begin{align} \label{foot56}
\forall g \in G, m \in \q H^1, \quad  \delta E(g.m)=g.\delta E(m) \quad \text{and} \quad H(g.m) = g.H(m).
\end{align}
Indeed, for fixed $m \in \q H^1$ and $g \in G$,  and for a perturbation map $h \to 0$ in $H^1$, due to the invariance of $E$ with respect to $G$, there hold $E(g.(m +h)) = E(m+ h)$, which yield by Taylor expansion:
\begin{align*}
E(g.m) +  ( \delta E(g.m), g.h )_{H^{-1},H^1}+ o(\|h\|_{H^1})  & = E(m) + ( \delta E(m) , h )_{H^{-1},H^1} + o(\|h\|_{H^1}).
\end{align*}
Hence $ ( \delta E(g.m), g.h )_{H^{-1},H^1} = ( \delta E(m) , h )_{H^{-1},H^1} = ( g. \delta E(m), g.h )_{H^{-1},H^1}$: as this is true for any direction $h\in H^1$, we conclude $\delta E(g.m) =  g. \delta E(m)$. The equality involving $H$ follows immediately.
 
 \bigskip

We now divide the proof in two steps.

\medskip

\nd {\it Step 1.} First, we explain the origin of the equation \eqref{eq_y_phi}. We argue on a formal level, assuming that $m$ has smooth spherical coordinates $(\varphi, \theta)$. We start by writing the gradient $\delta E(m)$ defined in \eqref{def:H} of the energy $E$ in the coordinates \eqref{spheric_coord}. More precisely, in the orthonormal related basis $(m,n,p)$ defined in \eqref{np}, we write
\be
\label{star1} -\delta E(m)=-\rho_0 m+\rho_1 n+\rho_2 p. \ee
By \eqref{energy_spherical:bounds}, as $n=\partial_\theta m$ and $\ds p=\frac{1}{\sin \theta}\partial_\varphi m$, 
the above components $\rho_0$, $\rho_1$ and $\rho_2$ are given by\footnote{Note that $\rho_1$ and $\rho_2$ correspond to the gradient $\delta E(m)$ in spherical variables $\theta$ and $\varphi$ in \eqref{number12} (up to a multiplicative constant). Also, $\rho_0$ is the energy density in $E(m)$ in \eqref{energy_spherical:bounds} due to the quadratic form of $E(m)$.}
\begin{align}
\label{rho*}
\rho_0&=m\cdot\delta E(m)=(\partial_x \theta)^2+ \sin^2 \theta \bigg( (\partial_x \varphi+\ga)^2+1-\ga^2\bigg),\\
\nonumber \rho_1&=-n\cdot\delta E(m)=\partial_{xx} \theta- \sin \theta \cos \theta \bigg( (\partial_x \varphi+\ga)^2+1-\ga^2\bigg),\\
\nonumber 
\rho_2&=-p\cdot\delta E(m)=\sin \theta \partial_{xx} \varphi+2\cos \theta \partial_x \theta (\partial_x \varphi+\ga).
\end{align}
Suppose now that $m$ satisfies \eqref{ll}. As $m$ takes values on the sphere,
\[ \partial_t m=\partial_t \theta n+\sin \theta \partial_t \varphi p. \]
Using \eqref{np}, \eqref{star1} and that $(m,n,p)$ is orthonormal, it follows that \eqref{ll} is equivalent to
\be
\label{intermed}
\begin{cases}
\partial_t \theta = \alpha \rho_1-\rho_2-\alpha h(t)\sin \theta,\\
\sin \theta \partial_t \varphi = \rho_1+\alpha \rho_2-h(t)\sin \theta.
\end{cases}
\ee
We are looking for precessing solutions $m(t,x)$ that are of the form
\be
\label{star2} g_*(t) . w_*^\pm(x), \ee
where $g_* = (y_*,\phi_*0$ is to be determined. They are smooth in $\R_x$ for every fixed time $t$ and can be written in spherical coordinates as in \eqref{preces} (as they avoid the poles $\pm e_1$):
\[ \bigg(\varphi_*(x-y_*(t))+\phi_*(t),\pm \theta_*(x-y_*(t))\bigg), \]
so that the above computations are justified. Now, from the equation \eqref{eq:dm_critical} on $w_*^\pm$, and \eqref{foot56},  $-\delta E(m) = -g. \delta E(w_*^\pm)$ is colinear to $g.w_*^\pm = m$, so that 
\[ \rho_1 = \rho_2 =0. \]
Therefore, by \eqref{intermed} and \eqref{formul1}, the equation \eqref{ll} for solutions \eqref{star2} is equivalent to the evolution of the center $y_*(t)$ and of the rotation angle $\phi_*(t)$ given in \eqref{eq_y_phi}.

\medskip

\nd {\it Step 2.} Second, we check directly (i.e. without using the spherical coordinates \eqref{spheric_coord}) that the map $m_*(t,x)=g_*(t) . w_*^\pm(x)$ (corresponding to \eqref{preces}) is a solution to \eqref{ll}.
Indeed, setting $w_* \in \{w^\pm_*\}$, since $w_*$ is a smooth critical point of $E$, i.e., $w_*\wedge \delta E(w_*)=0$, we compute by \eqref{def:H}:
\begin{align*}
w_* \wedge H(w_*) & = w_* \wedge (h(t)e_1) = - h(t) e_1 \wedge w_*, \\
w_* \wedge (w_* \wedge H(w_*)) & = -h(t) w_* \wedge (e_1 \wedge w_*). 
\end{align*}

Then \eqref{foot56}  yields $H(m_*) = H(g_*(t). w_*) = g_*(t). H(w_*)$. Thus, by the formulas in Appendix \ref{sec:tool}, we obtain:
\begin{align*}
m_* \wedge H(m_*) & = g_*(t). (w_* \wedge H(w_*)) = - h(t) e_1 \wedge m_*, \\
m_* \wedge (m_* \wedge H(m_*)) & = g_*(t). (w_* \wedge (w_* \wedge H(w_*)))=-h(t) m_* \wedge (e_1 \wedge m_*).
\end{align*}
On the other side, differentiating $m_*$ in $t$ and recalling \eqref{formula4}, we get 
\be
\label{2star} \partial_t m_* = - \dot y_* g_*(t). \partial_x w_* + \dot \phi_* g_*(t). e_1 \wedge w_*. \ee
Combining with \eqref{345} and \eqref{eq_y_phi}, we deduce that
\be
\label{eq-funda}
 - \dot y_* \partial_x w_* + \dot \phi_* e_1 \wedge w_*=\alpha h(t) w_* \wedge (e_1 \wedge w_*)-h(t) e_1 \wedge w_*.
\ee
By the above computations, we infer that $m_*$ solves \eqref{ll}.
\end{proof}

For later purposes, we gather below some useful computations on $w_*$.

\begin{lem} \label{lem:comput_w*}
The following holds true: $|\partial_x w_*^\pm|  = | e_1 \wedge w_*^\pm| = \sin \theta_*$ and 
\[ \| \partial_x w_*^\pm \|_{L^2}^2 =  \| e_1 \wedge w_*^\pm \|_{L^2}^2 = \int_{\R}  \sin^2 \theta_* dx = \frac{2}{\sqrt{1-\gamma^2}}. \]
Also, 
\[ \int_{\R}  (e_1 \wedge w_*^\pm) \cdot \partial_x w_*^\pm\, dx  = - \frac{2 \gamma}{\sqrt{1-\gamma^2}} \quad \text{and} \quad  \int_{\R} w^\pm_*  \wedge (  w^\pm_* \wedge e_1) \cdot  \partial_x w^\pm_* dx =-2. \]
Finally, there exists a constant $c_\gamma>0$ such that
\[ \forall g=(y, \phi)\in G, \quad \|-y\partial_x w_*^{\pm}+\phi e_1\wedge w_*^{\pm}\|_{L^2}\ge c_\gamma |g|. \]
In particular, $\partial_x w_*^{\pm}$ and $e_1\wedge w_*^{\pm}$ are linearly independent maps in $L^2$.
\end{lem}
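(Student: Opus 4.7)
The entire lemma is built on the first order ODE \eqref{345}, namely
\[
\partial_x w_*^\pm = \sqrt{1-\gamma^2}\, w_*^\pm\wedge(e_1\wedge w_*^\pm) - \gamma\, e_1\wedge w_*^\pm,
\]
together with the orthogonality of the triple $\{w_*^\pm,\ e_1\wedge w_*^\pm,\ w_*^\pm\wedge(e_1\wedge w_*^\pm)\}$. My plan is to derive the pointwise identities first, then integrate. Using $w_*^\pm\cdot e_1=\cos\theta_*$ and the vector identity $a\wedge(b\wedge c)=(a\cdot c)b-(a\cdot b)c$, I get $|e_1\wedge w_*^\pm|^2=1-\cos^2\theta_*=\sin^2\theta_*$ and $w_*^\pm\wedge(e_1\wedge w_*^\pm)=e_1-\cos\theta_*\, w_*^\pm$, so the latter also has squared norm $\sin^2\theta_*$. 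Since $e_1\wedge w_*^\pm$ is orthogonal to $w_*^\pm\wedge(e_1\wedge w_*^\pm)$ (the second vector lies in $\operatorname{span}(e_1,w_*^\pm)$), expanding the right-hand side of \eqref{345} gives $|\partial_x w_*^\pm|^2=(1-\gamma^2)\sin^2\theta_*+\gamma^2\sin^2\theta_*=\sin^2\theta_*$.

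For the $L^2$ norms, I change variables using the ODE \eqref{eq:theta*}, $d\theta_*=-\sqrt{1-\gamma^2}\sin\theta_*\, dx$, which yields
\[
\int_\R \sin^2\theta_*\, dx = \frac{1}{\sqrt{1-\gamma^2}}\int_0^\pi \sin\theta\, d\theta = \frac{2}{\sqrt{1-\gamma^2}}.
\]
For the cross integral, I dot \eqref{345} with $e_1\wedge w_*^\pm$: the first term drops out by orthogonality and I am left with $-\gamma\,|e_1\wedge w_*^\pm|^2=-\gamma\sin^2\theta_*$; integration then gives $-2\gamma/\sqrt{1-\gamma^2}$. For the fourth identity, I rewrite $w_*^\pm\wedge(w_*^\pm\wedge e_1)=\cos\theta_*\, w_*^\pm-e_1$, so the integrand becomes $\cos\theta_*(w_*^\pm\cdot\partial_x w_*^\pm)-e_1\cdot\partial_x w_*^\pm=-\partial_x\cos\theta_*$ (using $|w_*^\pm|=1$); the integral telescopes to $-[\cos\theta_*]_{-\infty}^{+\infty}=-2$ thanks to \eqref{eq:theta*}.

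For the coercivity statement, I expand
\[
\bigl\|-y\,\partial_x w_*^\pm+\phi\, e_1\wedge w_*^\pm\bigr\|_{L^2}^2 = y^2\|\partial_x w_*^\pm\|_{L^2}^2 + \phi^2\|e_1\wedge w_*^\pm\|_{L^2}^2 - 2y\phi\int_\R (e_1\wedge w_*^\pm)\cdot\partial_x w_*^\pm\, dx,
\]
and plug in the values just computed to obtain
\[
\bigl\|-y\,\partial_x w_*^\pm+\phi\, e_1\wedge w_*^\pm\bigr\|_{L^2}^2 = \frac{2}{\sqrt{1-\gamma^2}}\bigl(y^2+2\gamma y\phi+\phi^2\bigr).
\]
The $2\times 2$ symmetric matrix $\begin{pmatrix}1&\gamma\\ \gamma&1\end{pmatrix}$ has eigenvalues $1\pm\gamma$, both strictly positive by the assumption $\gamma^2<1$. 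Hence the quadratic form is bounded below by $(1-|\gamma|)(y^2+\phi^2)=(1-|\gamma|)|g|^2$, so one may set $c_\gamma:=\sqrt{2(1-|\gamma|)/\sqrt{1-\gamma^2}}$. The linear independence of $\partial_x w_*^\pm$ and $e_1\wedge w_*^\pm$ in $L^2$ is an immediate consequence.

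There is no genuine obstacle here: the work is entirely algebraic, and the only subtle point is to exploit systematically the orthogonal decomposition coming from \eqref{345} so that the cross terms either vanish or reduce to an exact derivative. The $\pm$ sign plays no role since all quantities computed depend only on $\theta_*$ (through $\cos\theta_*$, $\sin^2\theta_*$, etc.), which explains why the final identities are independent of the chosen branch $w_*^+$ or $w_*^-$.
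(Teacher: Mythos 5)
Your proof is correct and follows essentially the same route as the paper: derive the pointwise identities from \eqref{345} and the orthogonality of $e_1\wedge w_*^\pm$ and $w_*^\pm\wedge(e_1\wedge w_*^\pm)$, reduce the integrals to $\int\sin^2\theta_*\,dx$ via \eqref{eq:theta*}, and conclude coercivity by positive-definiteness of $\begin{pmatrix}1&\gamma\\\gamma&1\end{pmatrix}$. The only (inessential) difference is that you diagonalize this matrix to get a slightly sharper constant $c_\gamma$, whereas the paper simply applies $2|\gamma y\phi|\le|\gamma|(y^2+\phi^2)$ to obtain $c_\gamma=(1-\gamma^2)^{1/4}$.
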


\begin{proof}
By \eqref{formul1}, we have
\[ |e_1 \wedge w_*^\pm|^2 = \sin^2 \theta_*. \]
Therefore, 
\[ \| e_1 \wedge w_*^\pm \|_{L^2}^2 =\int_{\R}  \sin^2 \theta_* dx = - \frac{1}{\sqrt{1-\gamma^2}} \int_{\m R} \partial_x \theta_* \sin \theta_* \, dx= \frac{2}{\sqrt{1-\gamma^2}}. \]
For $\partial_x w_*^\pm$, Appendix \ref{sec:tool} yields
\[ w_*^\pm \wedge (e_1 \wedge w_*^\pm) = e_1 - \cos \theta_* w_*^\pm, \]
so that using \eqref{345} and orthogonality,
\[ | \partial_x w_*^\pm|^2 = (1-\gamma^2) (1 + \cos^2 \theta_* - 2 \cos \theta_* e_1 \cdot w_*^\pm) + \gamma^2 |e_1 \wedge w_*^\pm|^2 = \sin^2 \theta_*. \]
Then, using again \eqref{345}, we get
\[ (e_1 \wedge w_*^\pm) \cdot \partial_x w_* ^\pm= - \gamma |e_1 \wedge w_*^\pm|^2=-\gamma \sin^2 \theta_*, \]
and the desired identity follows by integration. Also, as $w^{\pm}_*\cdot \partial_x w^{\pm}_*=0$, we have
\[ \int_{\R} w^{\pm}_*  \wedge (  w^{\pm}_* \wedge e_1) \cdot  \partial_x w^{\pm}_* dx =  \int_{\R} \bigg( \cos \theta_* w_*^\pm - e_1\bigg)\cdot \partial_x w^{\pm}_*  dx = - \int_{\R}  \partial_x( \cos \theta_*) dx=-2. \]
For the last statement, using the above identities, we compute
\[ \|-y\partial_x w_*^{\pm}+\phi e_1\wedge w_*^{\pm}\|^2_{L^2}=(y^2+\phi^2+2\gamma y\phi)
\frac{2}{\sqrt{1-\gamma^2}}\ge  \sqrt{1-\gamma^2} (y^2+\phi^2). \qedhere \]
\end{proof}

\section{Asymptotic stability}

The aim of this section is to prove Theorem \ref{th:stab}. Recall that $\q H^s\subset \q H^1$ for $s \ge 1$ and $\q H^1$ is the space of finite energy configurations $E(m)<\infty$ (see Lemmas \ref{lem:Hs_H1} and \ref{lem:infinit}). As here the integrals are on $\m R$, we always denote $\ds \int:=\int_{\R}$. An important fact consists in the locally well-posedness in time of our equation \eqref{ll}, in the Hadamard sense, in $\q H^s$ for $s \ge 1$. More precisely, we have the following result:

\begin{thm}[Local well-posedness in $\q H^s$] \label{th:lwp} Let $\alpha >0$, $\gamma \in (-1,1)$ and $h \in L^\infty([0,+\infty), \m R)$. Assume $s \ge 1$ and $m_0 \in \q H^s$. Then there exist a maximal time $T_+= T_+(m_0) \in (0, +\infty]$ and a unique solution $m \in \q C([0, T_+), \q H^s)$ to \eqref{ll} with initial data $m_0$. 

Moreover,
\begin{enumerate}
\item if $T_+ <+\infty$, then $\| m(t) \|_{\q H^1} \to +\infty$ as $t \uparrow T_+$;
\item for $T<T_+$ (with $T_+$ finite or infinite), the map $\tilde m_0\in \q H^s \to \tilde m\in \q C([0,T], \q H^s)$ is continuous in a small $\q H^s$ neighbourhood of $m_0$ (for every initial data $\tilde m_0$ in that neighborhood, the maximal time of the corresponding solution $\tilde m$ satisfies $T_+(\tilde m_0) > T$);
\item if $s \ge 2$, one has the energy dissipation identity\footnote{Observe that in the energy dissipation identity \eqref{eq:en_dissip2}, all the terms are rightfully integrable because $m(t)\in \q H^s$ with $s\ge 2$, so $E(m(t))<\infty$, $m(t) \wedge e_1 \in L^2$ and $\delta E(m(t)) \in L^2$ for all $t<T_+$.}: $t \mapsto E(m(t))$ is a locally Lipschitz function in $[0,T_+)$ (even $\q C^1$ provided $h$ is continuous) and for all $t \in [0,T_+)$,
\begin{align}
\frac{d}{dt} E(m) = - \alpha \int ( |\delta E(m)|^2 - |m \cdot \delta E(m)|^2 )\, dx + \alpha h(t) \int (m \wedge e_1) \cdot (m \wedge \delta E(m)) \, dx. \label{eq:en_dissip2}
\end{align}
\end{enumerate}
\end{thm}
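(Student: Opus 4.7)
Our plan is to exploit the parabolic structure of \eqref{ll}. Using $|m|^2 = 1$ and the identity $m \wedge (m \wedge H) = (m \cdot H) m - H$, the equation can be recast as
\[ \partial_t m - \alpha \partial_{xx} m = \partial_x (m \wedge \partial_x m) + \alpha |\partial_x m|^2 m + \q N(m, \partial_x m, t), \]
where $\q N$ gathers first-order and zeroth-order nonlinear contributions (the DMI term, the anisotropy, the applied field, and the quadratic corrections coming from $m \cdot H$). Since $\alpha > 0$, the linear part is strictly parabolic, and crucially the quasi-linear top-order term appears in divergence form. To work in a standard Sobolev setting, I would fix a smooth reference $w \in \q H^s$ close to $m_0$ (for instance, a suitable mollification of $m_0$, projected back onto $\m S^2$ away from a transition region), write $m = w + u$ with $u \in H^s$, and derive the induced equation for $u$, which retains the same parabolic structure with a forcing driven by $w$.

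I would then apply Duhamel's formula with the heat semigroup $S(t) = e^{\alpha t \partial_{xx}}$: after an integration by parts that transfers one derivative from $\partial_x(m \wedge \partial_x m)$ onto the kernel, every nonlinear contribution involves at most $\partial_x m$ (multiplied by bounded functions of $m$) convolved against $\partial_x S(\tau)$, which enjoys the smoothing estimate $\| \partial_x S(\tau) f \|_{H^s} \lesssim \tau^{-1/2} \| f \|_{H^s}$. This integrable singularity powers a contraction argument on a ball in $\q C([0,T], H^s)$ for $T$ small depending only on $\|m_0\|_{\q H^s}$, $\alpha$, $\gamma$ and $\|h\|_{L^\infty}$. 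The sphere constraint $|m(t,\cdot)|^2 \equiv 1$ is automatically preserved, because the right-hand side of \eqref{ll} is pointwise orthogonal to $m$, so $\partial_t |m|^2 = 0$ and the initial condition $|m_0|=1$ propagates. Uniqueness and the stated continuous dependence in $\q H^s$ follow by an entirely analogous energy estimate on the difference $v = m - \tilde m$ of two solutions, using the parabolic gain $\alpha \|\partial_x v\|_{L^2}^2$ to absorb the lower-order terms and closing via Gronwall.

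For the blow-up alternative, I argue by contradiction: if $T_+ < +\infty$ and $\sup_{[0,T_+)} \|m(t)\|_{\q H^1}$ were finite, a higher-regularity bootstrap (differentiating $\|m(t)\|_{\q H^s}^2$ in time and absorbing every top-order term via the parabolic dissipation from $\alpha > 0$) would give $\sup_{[0,T_+)} \|m(t)\|_{\q H^s} < +\infty$; applying local existence near $T_+$ on an interval of length bounded below uniformly would then extend the solution past $T_+$, contradicting maximality. Finally, for the energy dissipation identity, once $s \ge 2$ we have $\delta E(m(t)) \in L^2$ and $\partial_t m(t) \in L^2$, so $\frac{d}{dt} E(m(t)) = \langle \delta E(m(t)), \partial_t m(t) \rangle_{L^2}$ is rigorously justified (with Lipschitz regularity reflecting the $L^\infty$ bound on $h$, upgrading to $\q C^1$ when $h$ is continuous). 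Substituting $\partial_t m = m \wedge H + \alpha H - \alpha(m \cdot H) m$ with $H = -\delta E(m) + h(t) e_1$, and exploiting the pointwise identities $\delta E(m) \cdot (m \wedge \delta E(m)) = 0$ and $m \wedge (m \wedge e_1) = m_1 m - e_1$, a direct algebraic manipulation yields \eqref{eq:en_dissip2}. The hardest step is handling the quasi-linear top-order term at the low-regularity endpoint $s = 1$: the combination of its divergence form $\partial_x(m \wedge \partial_x m)$ with the parabolic smoothing of $\partial_x S(t)$ is exactly what makes the contraction fit in $H^1$ rather than requiring $H^2$.
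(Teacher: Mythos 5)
Your plan is genuinely different from the paper's. The paper follows Tsutsumi and performs a stereographic projection $\Pi_{p_0}$ of $\m S^2\setminus\{p_0\}$ onto $\m C$, converting \eqref{ll} into a scalar complex Ginzburg--Landau-type equation
\[ i\partial_t u + (1-i\alpha)\partial_{xx} u = 2(1-i\alpha)\frac{\bar u|\partial_x u|^2}{1+|u|^2} + \text{l.o.t.}, \]
with a \emph{constant-coefficient} dissipative linear part and a nonlinearity involving only first-order derivatives squared. You instead stay extrinsic in $\m R^3$ and write $\partial_t m - \alpha\partial_{xx} m = \partial_x(m\wedge\partial_x m) + \alpha|\partial_x m|^2 m + \q N$, aiming for a Duhamel contraction with $e^{\alpha t\partial_{xx}}$. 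This is a reasonable alternative, but it runs into two issues that the stereographic change of variables was designed to avoid, and as written your argument has gaps there.

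First, the constraint. Your rewriting uses $m\wedge(m\wedge H)=(m\cdot H)m-H$, which is valid only on $|m|=1$. The fixed-point iteration produces $\m R^3$-valued maps with no a priori reason to lie on the sphere, and for the rewritten equation the right-hand side is \emph{not} pointwise orthogonal to $m$ when $|m|\ne 1$. The assertion ``$\partial_t|m|^2=0$ because the RHS of \eqref{ll} is orthogonal to $m$'' is therefore circular: it refers to the original \eqref{ll}, not to the PDE you are actually solving in the contraction. A clean extrinsic argument would instead derive the linear parabolic equation satisfied by $\psi=|m|^2-1$ (which has the form $\partial_t\psi = \alpha\partial_{xx}\psi + b(t,x)\psi$ \emph{only if} every piece of $\q N$ that is not orthogonal to $m$ carries a factor of $\psi$), and then invoke uniqueness for that linear equation; this requires inspecting each term of $\q N$ and is not automatic. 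Tsutsumi's projection makes this issue disappear: the constraint is built into the change of variables.

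Second, the regularity endpoint. You invoke ``$\|\partial_x S(\tau)f\|_{H^s}\lesssim \tau^{-1/2}\|f\|_{H^s}$'' with $f=m\wedge\partial_x m$. But if $m\in H^s$, you only control $m\wedge\partial_x m$ in $H^{s-1}$ (at $s=1$: in $L^2$). Applying the kernel $\partial_x S(\tau)$ to an $H^{s-1}$ function and asking for $H^s$ costs a factor $\tau^{-1}$, which is not integrable, so the fixed point does not close at level $H^s$ with the naive iteration scheme you describe. Closing at $s=1$ genuinely requires auxiliary parabolic space-time norms (Kato-type $L^p_t L^q_x$ or $L^2_t H^{s+1}_x$ smoothing norms) together with interpolation/Gagliardo--Nirenberg to handle the product $m\wedge\partial_x m$. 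You flag this as ``the hardest step'' but assert that the divergence form plus $\partial_x S$ smoothing ``is exactly what makes the contraction fit in $H^1$'', which is overstating it: the divergence structure buys you one derivative, but you still need a second, structural ingredient (either a space-time norm or the scalar reformulation) to land. The paper sidesteps this because, after stereographic projection, the quasilinear top-order term has constant complex coefficient $(1-i\alpha)\partial_{xx}$ and the nonlinearity is genuinely of lower order.

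On the remaining points: your extrinsic rewriting of the equation is algebraically correct (indeed $m\wedge\partial_{xx}m=\partial_x(m\wedge\partial_x m)$ and $m\cdot\partial_{xx}m=-|\partial_x m|^2$ on the sphere), and your computation for the energy-dissipation identity matches the paper's proof of part (3) essentially verbatim. The blow-up alternative by higher-regularity bootstrap and the continuous dependence by Gronwall on the difference of two solutions are standard once the contraction is in place. So the scheme is sound modulo the two gaps above; both would require nontrivial additional work to make rigorous, which is what the paper avoids by projecting.
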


\begin{proof}
We only give a sketch on how the proof should be conducted because a full proof would be lengthy (but not too technical), and is not the primary purpose of this paper. As we need a well-posedness result in the sense of Hadamard, we cannot resort to weak solutions as in Alouges and Soyeur \cite{AS92}. We instead follow the idea of Tsutsumi \cite{Tsu08}, which is to perform a stereographic projection $\Pi_{p_0}$ around a point $p_0 \in \m S^2$ from which $m_0$ remains away. Such point $p_0$ exists because as $m_0$ admits limits at $\pm \infty$ 
(see Lemma~\ref{lem:infinit}), $m_0$ remains in a neighbourhood of the poles $\{\pm e_1\}$ outside $x \in [-A,A]$, while on the compact $[-A,A]$, $m_0$ has finite $H^1$ norm, so that its image $m_0([-A,A])$ cannot cover any two-dimensional cap on $\m S^2$ (see Lemma \ref{lem:bv_S2} in the Appendix). $\m S^2\setminus \{p_0\}$ is mapped into the plane $\m R^2\sim \m C$ under the stereographic projection $\Pi_{p_0}$, and \eqref{ll} is transformed into a quasilinear dissipative Schr\"odinger equation for $u=\Pi_{p_0} m$ of the type
\[ i \partial_t u + (1 - i \alpha) \partial_{xx} u = 2 (1 - i \alpha) \frac{\bar u |\partial_x u|^2}{1+|u|^2} + \text{lower order terms}, \]
with $\bar u$ the complex conjugate of $u$ (see e.g. \cite{LN84} for the exact computation). The dissipative linear semigroup of this equation enjoys smoothing properties similar to those of the heat equation, so that the above formulation is amenable to a Banach fixed point argument in $H^1$, or in $H^s$ for $s \ge 1$. The first conclusions of Theorem \ref{th:lwp} follow. We also refer to \cite{GdL17} (for a the construction of mild solutions for data with small BMO norm) and the reference therein for further details\footnote{We underline that the Cauchy theory for the Laudau-Lifshitz-Gilbert equation is not completely satisfactory: the  equation is $\dot H^{1/2}$ critical for its Schr\"odinger parts, so one could expect a result for such regularity. Theorem \ref{th:lwp} is not meant to be optimal as $m_0 \in \q H^s$ with $s \ge 1$, but it is enough for our purposes. Much sharper results exist in the slightly different (and harder) context of Schr\"odinger maps in critical spaces, see  \cite{BIK07,BIKT11} (even though they are a priori estimates on \emph{smooth} solutions).}.

\smallskip

\noindent {\it Proof of the energy dissipation \eqref{eq:en_dissip2}}.
Here,  $m\in \q H^s$, $s\ge 2$, and so $\delta E(m) =  -\partial_{xx} m - 2 \gamma e_1 \wedge \partial_x m + m_2 e_2 + m_3 e_3 \in L^2$ (by Lemma \ref{lem:Hs_H1}) and so, $\partial_t m\in L^2$ (by \eqref{ll}) yielding
\begin{align*}
\frac{d}{dt} E(m) & = \int \partial_t m \cdot \delta E(m) dx \\
& = \int (m \wedge H(m)) \cdot \delta E(m) dx - \alpha \int (m \wedge ( m \wedge H(m))) \cdot \delta E(m) dx = I + II. 
\end{align*}
First we consider the precession term $I$. Using $H(m) = -\delta E(m) + h e_1$ with $h=h(t)$, we have
\[ I = h \int (m \wedge e_1) \cdot \delta E(m) \, dx. \]
As $m\in \q H^s$, $s\ge 2$ (in particular, $m(\pm \infty)\in\{\pm e_1\}$), integration by parts yields
\[ \int (m \wedge e_1) \cdot \partial_{xx} m \, dx = 0. \]
Also integration by parts and skew-symmetry of $\wedge$ lead to
\[ \int (m \wedge e_1) \cdot (e_1 \wedge \partial_x m) dx = - \int (\partial_x m \wedge e_1) \cdot (e_1 \wedge m) \, dx =0. \]
Since $(m \wedge e_1) \cdot (m_2e_2+m_3e_3)=(m \wedge e_1) \cdot (m-m_1e_1)=0$, by the above computations, we deduce that $I=0$: the precession term has no contribution.

\smallskip

We now consider the damping term $II$, and compute using Appendix \ref{sec:tool}:
\begin{align*}
\MoveEqLeft m \wedge ( m \wedge H(m)) \cdot \delta E(m) =  m \wedge ( m \wedge (-\delta E(m)) \cdot \delta E(m) + h   m \wedge ( m \wedge e_1) \cdot \delta E(m) \\
& =  |m \wedge \delta E(m)|^2 - h (m \wedge e_1) \cdot (m \wedge \delta E(m)) \\
& = (|\delta E(m)|^2 - |m \cdot \delta E(m)|^2) - h (m \wedge e_1) \cdot (m \wedge \delta E(m)).
\end{align*}
Integrating in space yields \eqref{eq:en_dissip2}.
\end{proof}

\bigskip

We will first prove Theorem \ref{th:stab} for initial data $m_0 \in \q H^2$ which is close to $w_*$ in $H^1$, and then in Section \ref{sec:44}, we use a limiting argument to relax the regularity  $m_0 \in \q H^2$ to  $m_0 \in \q H^1$. Hence, until Section~ \ref{sec:4.3} included (as recalled in the statement of the results),  we consider 
\begin{align}
m_0  \in \q H^2
\end{align}
and denote $m \in \q C([0,T), \q H^2)$ the solution to \eqref{ll} with initial data $m_0$.

\subsection{Modulation and decomposition of the magnetisation}

The first ingredient in proving Theorem \ref{th:stab} is a modulation result in Lemma \ref{lem:mod} below, which is a vector-valued version of the scalar context presented in \cite{Wei86} or \cite{MM01a}. This modulation is performed with respect to the gauge group $G$, so that the orthogonality conditions (see \eqref{eq:ortho} below) are induced by the map $g\in G\mapsto g.w_*$ with $w_*\in \{w_*^\pm\}$ and describe the $L^2$-orthogonality conditions  to the tangent plane\footnote{By Lemma \ref{lem:comput_w*}, $\partial_x w_*$ and $e_1 \wedge w_*$ are $L^2$ linearly independent. However, they are not orthogonal in $L^2$.} (up to a gauge) $\Span \{\partial_x w_*, e_1 \wedge w_*\}$ of the target space of the map $g\mapsto g.w_*$ (due to \eqref{2star}). Lemma \ref{lem:mod} is a result at the stationary level. Then, in Proposition \ref{lem:decomp} below, we prove  the dynamical properties of this modulation.

\begin{lem}[Modulation] \label{lem:mod} Let $w_*\in \{w_*^\pm\}$ and  set  the space
\[ \q M=\{w\in \q C(\m R, \m S^2)\, :\, w-w_* \in H^1(\R, \R^3)\} \]
endowed with the $H^1$ distance.
There exist $\delta_1>0$ and $C_1  \ge 1$ such that the following holds. For all $w \in \q M$ such that
\[ \| w - w_* \|_{H^1} \le \delta_1, \]
there exists a gauge $g = (y,\varphi) \in G$ such that if we consider the map $ \e\in 
H^1(\R,  \R^3)$ defined by 
\[ \e=(-g).w-w_*, \quad \text{i.e., } \quad w = g.(w_* + \e), \]
then $2 \e \cdot w_* + |\e|^2 =0$ for all $x \in \m R$,  and the orthogonality constraints hold
\begin{align} \label{eq:ortho}
\int \e \cdot \partial_x w_* \, dx =0, \quad \int \e \cdot (e_1 \wedge w_*)\,  dx =0;
\end{align}
furthermore, 
\begin{align} \label{est:g_w-w*}
|g| + \| \e \|_{H^1} \le C_1 \| w - w_* \|_{H^1},
\end{align}
 $g$ is the unique gauge with the above properties and the map $w \in \q M \mapsto g\in G$ is of class $\q C^\infty$ in a neighbourhood of $w_*$.
\end{lem}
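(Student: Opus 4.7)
The plan is a classical modulation argument via the implicit function theorem. Define
\[ F: G \times \q M \to \m R^2, \quad F(g, w) := \bigg(\int ((-g).w - w_*) \cdot \partial_x w_* \, dx,\ \int ((-g).w - w_*) \cdot (e_1 \wedge w_*) \, dx \bigg). \]
The integrands are well-defined: $\partial_x w_*$ and $e_1 \wedge w_*$ lie in $L^2$ by Lemma~\ref{lem:comput_w*}, while $(-g).w - w_* = ((-g).w_* - w_*) + (-g).(w - w_*) \in H^1$ by Remark~\ref{rem23} and the fact that the $G$-action preserves $H^1$. Using the algebraic identities \eqref{formula4}, $F$ extends to a smooth map on $G \times (w_* + H^1)$, with $F(0, w_*) = 0$. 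We will apply the IFT to this affine Banach extension; the pointwise $\m S^2$-constraint carried by $\q M$ plays no role in the modulation step itself and re-enters only through the identity $2\e \cdot w_* + |\e|^2 = 0$.

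The crux is the invertibility of $D_g F(0, w_*)$. Writing $g = (y, \phi)$ and differentiating $(-g).w_* = R_{-\phi}\tau_{-y} w_*$ via \eqref{formula4}, one finds
\[ \partial_y[(-g).w_*]\big|_{g=0} = \partial_x w_*, \qquad \partial_\phi[(-g).w_*]\big|_{g=0} = -e_1 \wedge w_*. \]
Inserting the explicit values from Lemma~\ref{lem:comput_w*} yields
\[ D_g F(0, w_*) = \frac{2}{\sqrt{1-\gamma^2}}\begin{pmatrix} 1 & \gamma \\ -\gamma & -1 \end{pmatrix}, \qquad \det D_g F(0, w_*) = -4 \neq 0. \]
This non-degeneracy is a quantitative form of the linear independence in $L^2$ of $\partial_x w_*$ and $e_1 \wedge w_*$ --- these being the tangent vectors at $g=0$ to the group orbit $\{g.w_*\}_{g \in G}$ --- and is the only place where the explicit geometry of the domain wall $w_*$ enters the argument. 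The main (and essentially only) obstacle is precisely this determinant computation; the rest is formal.

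The implicit function theorem then produces a neighbourhood $\q U$ of $w_*$ in $w_* + H^1$ and a unique $\q C^\infty$ map $w \in \q U \mapsto g(w) \in G$ satisfying $g(w_*) = 0$ and $F(g(w), w) = 0$. Setting $\e := (-g(w)).w - w_*$, the orthogonalities \eqref{eq:ortho} coincide with $F(g(w), w) = 0$, and for $w \in \q M$,
\[ 2 \e \cdot w_* + |\e|^2 = |w_* + \e|^2 - 1 = |(-g(w)).w|^2 - 1 = 0 \]
pointwise. Finally, the IFT yields the Lipschitz bound $|g(w)| \lesssim \|w - w_*\|_{H^1}$, whence
\[ \|\e\|_{H^1} \le \|(-g(w)).w_* - w_*\|_{H^1} + \|(-g(w)).(w - w_*)\|_{H^1} \lesssim |g(w)| + \|w - w_*\|_{H^1}, \]
using the smooth dependence of $g.w_*$ on $g$ (Remark~\ref{rem23}) together with the $H^1$-isometric action of translations and rotations. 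This delivers \eqref{est:g_w-w*} and completes the plan.
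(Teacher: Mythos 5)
Your proof is correct and follows essentially the same approach as the paper: both set up the modulation map $F$, compute $D_g F(0,w_*)$ via the same derivatives $\partial_y\big[(-g).w_*\big]\big|_{g=0}=\partial_x w_*$ and $\partial_\phi\big[(-g).w_*\big]\big|_{g=0}=-e_1\wedge w_*$, use Lemma~\ref{lem:comput_w*} to obtain $\frac{2}{\sqrt{1-\gamma^2}}\left(\begin{smallmatrix}1 & \gamma\\ -\gamma & -1\end{smallmatrix}\right)$ with determinant $-4$, and invoke the implicit function theorem. The one genuine (and welcome) simplification you make is to apply the IFT on the ambient affine Banach space $G\times(w_*+H^1)$ rather than on the product with the constrained manifold $\q M$; this sidesteps the paper's Step 1, which establishes that $\q M$ is a $\q C^\infty$ Banach submanifold of $w_*+H^1$ via the submersion $K(\e)=2\e\cdot w_*+|\e|^2$, since the pointwise $\m S^2$-constraint indeed only re-enters \emph{a posteriori} through $|{(-g).w}|=1$. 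One small caveat worth making explicit: the smoothness of $F$ in $g$ does not come from smoothness of $g\mapsto(-g).w$ as a map into $H^1$ (this map is only $\q C^1$ into $L^2$ for $w\in w_*+H^1$), but from the fact that the test functions $\partial_x w_*$ and $e_1\wedge w_*$ are smooth and exponentially localised, so that derivatives in $g$ can be transferred onto them by integration by parts; the paper is similarly terse on this point.
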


\begin{nb}
\label{rem:23}
Observe that $\e=(-g).w-w_*$ is as smooth as $w$, and the result holds in $H^s$ for any $s \ge 1$, that is, if $w$ is a small perturbation of $w_*$ in $\q M\cap \dot{H}^s$, then $\| \e \|_{H^s} \le C_s \| w - w_* \|_{H^s}$.
\end{nb}

\begin{nb}
Observe that $ \Span \{\partial_x w_*, e_1 \wedge w_*\}=\Span \{n_*, p_*\}$, where the orthonormal frame $(w_*,n_*,p_*)$ is defined by \eqref{np} (and written explicitly below in \eqref{eq:w*_spheric}). Then, due to \eqref{345} the orthogonality conditions \eqref{eq:ortho} are equivalent to 
\[ \int \e \cdot \sin \theta_* n_* \, dx =\int \e \cdot \sin \theta_* p_* \, dx=0. \]
As $\sin \theta_*$ is smooth and exponentially localised, one can also perform the modulation with respect to $\{ \sin \theta_* n_*, \sin \theta_* p_*\}$ instead of $\{\partial_x w_*, e_1 \wedge w_*\}$.
\end{nb}

\begin{proof}

{\it Step 1. Existence of $g$.} Observe that $H^1(\m R)$ endowed with its natural topology is a Banach algebra. Then
\[ K :\begin{array}[t]{r@{\ }c@{\ }l} 
 H^1 (\m R,\m R^3) & \to & H^1(\m R) \\
 \e & \mapsto & 2 \e \cdot w_* + |\e|^2
 \end{array}
 \]
is well defined and $K$ is a continuous polynomial in $\e$, in particular, it is a $\q C^\infty$ map (see Remark \ref{rem23}). Define 
\[ \tilde {\q M} = K^{-1}(\{ 0 \}) = \{ \e \in H^1(\m R, \m R^3): 2 \e \cdot w_* + |\e|^2 =0 \}, \]
and let $\e_0 \in \tilde{\q M}$. As $\nabla K(\e_0). \eta = 2 \eta\cdot (w_* + \e_0)$ and $|w_*+\e_0|=1$, we deduce that for any $f \in H^1(\m R)$, $\nabla K(\e_0).\big(\frac12 f (w_* + \e_0)\big) = f$ so that $\nabla K(\e_0)$ is surjective. As a consequence, $\tilde{\q M}$ is a $\q C^\infty$ Banach submanifold of $H^1(\m R, \m R^3)$, and so $\q M = w_* + \tilde{\q M}$ is a $\q C^\infty$ Banach submanifold of the Banach affine space $w_*+ H^1(\m R, \m R^3)$. We now claim that the function
\begin{gather*}
F: \begin{array}[t]{r@{\ }c@{\ }l} \q M \times G & \to &  \m R^2  \\
(w,g) & \mapsto & \ds  (F_1(w,g), F_2(w,g)): = \left( \int \e \cdot \partial_x w_* dx,  \int \e \cdot (e_1 \wedge w_*) dx \right),
\end{array} \\
\text{where} \quad \e := (-g). w-w_*.
\end{gather*}
is $\q C^\infty$ between Banach manifolds. Indeed, the map $\q M \times G \to \tilde {\q M}$, $(w,g) \mapsto \e$ is well defined  and of class $\q C^\infty$ (the set $G=(\m R^2, +)$ is always endowed with the standard topology). Since $\partial_x w_*, e_1 \wedge w_* \in L^2$ (in fact, these two maps are smooth and exponentially localised by \eqref{formul1}), by composition, $F$ is $\q C^\infty$. Writing $g=(y, \varphi)$, we compute $\partial_y \e = (-g).\partial_x w$, $\partial_\varphi \e = - e_1 \wedge (-g).w$, so that
\begin{align}
\partial_{g} F(w_*,(0,0)) & = \begin{pmatrix}
\partial_y F_1 & \partial_\varphi F_1 \\
\partial_y F_2 & \partial_\varphi F_2
\end{pmatrix} = \begin{pmatrix} 
 \| \partial_x w_* \|_{L^2}^2 & - \ds \int (e_1 \wedge w_* ) \cdot \partial_x w_* dx \\
 \ds \int \partial_x w_* \cdot (e_1 \wedge w_*) dx & -\| e_1 \wedge w_* \|_{L^2}^2 \end{pmatrix} \nonumber \\
& = \| \sin \theta_* \|_{L^2}^2
\begin{pmatrix} 
1 & \gamma \\
- \gamma & -1
\end{pmatrix} = \frac 2 {\sqrt{1-\gamma^2}} \begin{pmatrix} 
1 & + \gamma \\
- \gamma & -1
\end{pmatrix}, 
\label{eq:d_gF}
\end{align}
where we used Lemma \ref{lem:comput_w*}. As $\gamma^2 <1$, it follows that $\partial_g F(w_*, (0,0))$ is invertible. Since $F(w_*, (0,0))=(0,0)$, the implicit function theorem (on Banach manifolds) applies and yields a unique $\q C^\infty$ map $\Psi: w \mapsto g$ defined on a neighbourhood of $w_*$ in $\q M$. The orthogonality conditions \eqref{eq:ortho} are encoded in $F$ and the pointwise equality $2 \e \cdot w_* + |\e|^2 =0$ is encoded in $\tilde {\q M}$. The regularity statement at the end of the Lemma is simply the regularity of $\Psi$.

\medskip

{\it Step 2. Estimates on $g$ and $\e$.} Up to reducing the neighbourhood of $w_*$, we can always assume  $\Psi$ is Lipschitz in that neighbourhood and $|\Psi(w)|\le 1$ for every $w$ in that neighborhood. Then the bound on $g$ of \eqref{est:g_w-w*} follows from Lipschitz continuity of  $\Psi$ and the fact $F(w_*,(0,0))=(0,0)$ (so that $\Psi(w_*) =(0,0)$). Finally, we have for some $C>0$:
\begin{align}
\nonumber \|\e\|_{H^1} & \le \|(-g).(w-w_*)\|_{H^1} +  \|(-g).w_*-w_* \|_{H^1} \\
 \nonumber & \le  \|w-w_*\|_{H^1} + \| R_{-\phi} .\tau_{-y} w_*- \tau_{-y} w_*\|_{H^1} + \| \tau_{-y} w_* - w_* \|_{H^1} \\
 \label{here-a} & \le \|w-w_*\|_{H^1} + C |\phi| \| e_1 \wedge w_* \|_{H^1} +  C |y| \| \partial_x w_* \|_{H^1},
 \end{align}
 and from there and the estimate on $g=(y, \phi)$, the bound on $\e$ in \eqref{est:g_w-w*} follows. If in addition $w\in \dot{H}^s$ for $s\ge 1$, then a similar computation yields $\|\e\|_{H^s}\le  \|w-w_*\|_{H^s} + C_s |g|$  $\le \tilde C_s \|w-w_*\|_{H^s}$ as $|g|\le C \|w-w_*\|_{H^1} \le C \|w-w_*\|_{H^s}$.
\end{proof}

Using the above modulation lemma at the stationary level, we can dynamically decompose in Proposition \ref{lem:decomp} below a magnetisation $m$ on a time interval where it remains close to the domain wall family $G .w_* = \{ g.w_* : g \in G \}$ (where $w_* \in \{ w^\pm_* \}$). Before we proceed, let us introduce some notation which will be useful to perform those computations.

\begin{defi}
\label{def:O}
We denote $W^{2,\infty}(\m R)$ the set of bounded Lipschitz functions in $\m R$ whose derivative is also Lipschitz.\footnote{The $W^{2,\infty}(\m R)$ regularity of coefficients $\beta$ is needed in Claim \ref{cl:O_IPP} below, in order to perform two integrations by parts and bound them in $L^\infty$.} If $k \ge 0$ and $\ell \ge 1$, for a given (possibly) vector-valued function $f=(f_j)_{1\le j \le J}$, we use the notation 
\[ u = O_k^\ell(f) \]
for a (possibly vector valued) function $u$ if (each component of) $u$ is an homogenous polynomial of total degree $\ell$ in (the components of) $f$ and their derivatives up to order $k$ such that the total number of derivatives in each term is at most $k$, and the coefficients in this polynomial are $W^{2,\infty}(\m R)$ functions. That is, $u$ is the sum
of terms of the form
\begin{gather*} 
\beta \prod_{j=1}^J \prod_{\kappa=0}^{\kappa_j} (\partial_x^{\kappa} f_j)^{\ell_{j,\kappa}} \ \ \text{where} \ \ \ell_{j,\kappa}\ge 0, \  \ \sum_{j=1}^J \sum_{\kappa =0}^{\kappa_j} \ell_{j,\kappa} =\ell, \ \ \sum_{j=1}^J \sum_{\kappa =0}^{\kappa_j} \ell_{j,\kappa} \kappa \le k \ \ \text{and} \ \  \beta  \in W^{2,\infty}(\m R).
\end{gather*}
\end{defi} 
In practice, we will only use $1\le \ell \le 6$ and $k \le 4$. 

\begin{exa}
If $k=0$, then $u=\sum \beta \prod_j f_j^{\ell_j}$ with $\sum_j \ell_j=\ell\ge 1$ and coefficients $\beta\in W^{2,\infty}(\m R)$. If $k=1$, then
$u=\sum \beta (\partial_x f_{s})^{l_s}\prod_{j} f_j^{\ell_j}$ with $l_s\in \{0,1\}$, 
$\sum_{j} \ell_j=\ell-l_s$ and coefficients $\beta\in W^{2,\infty}(\m R)$.
\end{exa}

Here are a few direct consequences of Definition \ref{def:O}:

\begin{claim} \label{cl:Okl_1}
(i) $O_k^\ell(f) \cdot O_{\tilde k}^{\tilde \ell}(\tilde f) = O^{\ell+\tilde \ell}_{k+\tilde k}(f, \tilde f)$. \\
(ii) If $\tilde k \ge k$, then $O^\ell_k(f) = O^\ell_{\tilde k}(f)$. \\
iii) If $\beta\in W^{2,\infty}(\m R)$, then $\beta O^\ell_k(f) = O^\ell_k(f)$. \\
(iv) If $u = O^\ell_k(f)$ and all the coefficients $\beta$ in the polynomial $u$ belong to $W^{3,\infty}(\m R)$, then $\partial_x u = O^\ell_{k+1}(f)$.
\end{claim}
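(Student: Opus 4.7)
The plan is that each of the four assertions follows directly from unpacking Definition~\ref{def:O} and observing that $W^{2,\infty}(\m R)$ is a Banach algebra stable under addition and multiplication, and that $W^{3,\infty}(\m R)\subset W^{2,\infty}(\m R)$ is stable under differentiation into $W^{2,\infty}(\m R)$. I would treat the four points in order, since each one is easier once the previous is established.

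For (i), I would write $O^\ell_k(f)$ and $O^{\tilde\ell}_{\tilde k}(\tilde f)$ as sums of admissible monomials and consider a generic product $\beta\tilde\beta\,\prod(\partial_x^\kappa f_j)^{\ell_{j,\kappa}}\prod(\partial_x^\kappa \tilde f_i)^{\tilde\ell_{i,\kappa}}$. Summing the exponents gives total degree $\ell+\tilde\ell$ in the variables $(f,\tilde f)$, summing the products of exponents and derivative orders gives a derivative count bounded by $k+\tilde k$, and $\beta\tilde\beta\in W^{2,\infty}(\m R)$ because the space is an algebra; hence the product lies in $O^{\ell+\tilde\ell}_{k+\tilde k}(f,\tilde f)$. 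Point (ii) is immediate: any admissible monomial for $O^\ell_k(f)$ has derivative count at most $k\le\tilde k$, hence is admissible for $O^\ell_{\tilde k}(f)$, and the total degree $\ell$ is unchanged. Point (iii) is the particular case of (i) with $\tilde f$ vacuous, $\tilde k=0$, $\tilde\ell=0$, and the constant monomial equal to $\beta$; equivalently, one just multiplies the coefficient $\beta_0$ of each monomial by $\beta$, and $\beta\beta_0\in W^{2,\infty}(\m R)$ because this space is an algebra.

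For (iv), the only genuine computation, I would apply the Leibniz rule to a generic monomial $\beta\prod_{j,\kappa}(\partial_x^\kappa f_j)^{\ell_{j,\kappa}}$ appearing in $u$. Differentiating produces a sum of terms: one in which $\beta$ is replaced by $\partial_x\beta\in W^{2,\infty}(\m R)$ (using $\beta\in W^{3,\infty}(\m R)$), with the same monomial structure (so same degree $\ell$, same derivative count $\le k\le k+1$), and, for each factor $(\partial_x^\kappa f_j)^{\ell_{j,\kappa}}$ with $\ell_{j,\kappa}\ge 1$, a term in which one copy of $\partial_x^\kappa f_j$ is replaced by $\partial_x^{\kappa+1}f_j$. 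In this latter case the total degree remains $\ell$ and the derivative count increases by exactly one, so is at most $k+1$, while the coefficient $\beta$ is unchanged and so still $W^{2,\infty}(\m R)$. Summing, $\partial_x u$ is a sum of admissible monomials for $O^\ell_{k+1}(f)$, as claimed.

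I do not anticipate any real obstacle: the whole claim is a bookkeeping verification on the definition, and the only mildly delicate point is making sure that the $W^{2,\infty}$ regularity of the coefficients is preserved under multiplication (for (i), (iii)) and under one differentiation (for (iv), which is precisely why the hypothesis $\beta\in W^{3,\infty}(\m R)$ is inserted there).
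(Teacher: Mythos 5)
Your verification is correct, and it matches the paper, which states Claim~\ref{cl:Okl_1} as a direct consequence of Definition~\ref{def:O} without giving a proof: (i) uses that degrees and derivative counts add under multiplication and that $W^{2,\infty}(\m R)$ is an algebra; (ii) is a trivial inclusion; (iii) is multiplication of coefficients; (iv) is Leibniz with the $W^{3,\infty}$ hypothesis absorbing one derivative on the coefficient. One tiny caveat: (iii) is not literally a special case of (i), since Definition~\ref{def:O} requires $\ell\ge 1$ so there is no $O^0_0$ symbol to plug in; your fallback argument (multiply each coefficient by $\beta$, stay in $W^{2,\infty}$) is the right one and what you should lead with.
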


Definition \ref{def:O} is made to express pointwise bounds that will turn into Sobolev bounds after integration \emph{with linear dependance in the highest term} provided $k\ge 2$, as follows:

\begin{claim} \label{cl:Okl_2}
Assume that $u=  O_k^\ell(f)$ with $\ell\ge 1$. If $k \ge 2$ and $f\in H^k$, then there holds
\begin{align} \label{est:O_H2}
\| u \|_{L^2} \lesssim \| f \|_{H^{k-1}}^{\ell-1} \| f \|_{H^k}. 
\end{align}
If $k \in \{0,1\}$ and $f\in H^1$, then
\begin{align} \label{est:O_H1}
\| u \|_{L^2}  \lesssim  \| f \|_{H^1}^\ell.
\end{align}
The implicit constants depend on the $L^\infty$ norm of the coefficients $\beta$ in the polynomial $u$.
\end{claim}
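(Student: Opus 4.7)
The plan is to reduce the estimate to a single monomial and then bound that monomial by placing one factor in $L^2$ and the remaining $\ell - 1$ factors in $L^\infty$, exploiting the 1D Sobolev embedding $H^1(\R) \hookrightarrow L^\infty(\R)$. By definition, $u = O^\ell_k(f)$ is a finite sum of monomials of the form $\beta \prod_{i=1}^\ell \partial_x^{\kappa_i} f_{j_i}$ with $\sum_i \kappa_i \le k$ and coefficient $\beta \in W^{2,\infty}(\R) \subset L^\infty(\R)$; since the implicit constants in the claim may depend on the polynomial structure and on the $\| \beta \|_{L^\infty}$'s, it suffices to estimate a single such monomial. The basic pointwise bounds at hand are
\[ \| \partial_x^\kappa g \|_{L^2} \le \| g \|_{H^\kappa}, \qquad \| \partial_x^\kappa g \|_{L^\infty} \lesssim \| g \|_{H^{\kappa+1}}. \]

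For the regime $k \in \{0,1\}$, each factor carries at most one derivative, hence is controlled in $L^2$ by $\| f \|_{H^1}$ (if a derivative is present) and in $L^\infty$ by $\| f \|_{H^1}$ in every case. Placing the (unique) factor carrying the derivative (if any) in $L^2$ and all the others in $L^\infty$, and multiplying the resulting bounds, immediately yields $\| u \|_{L^2} \lesssim \| f \|_{H^1}^\ell$, which is \eqref{est:O_H1}.

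For $k \ge 2$, I would choose $i_0$ to be an index realising $\kappa_{i_0} = \max_i \kappa_i$, put the corresponding factor in $L^2$ (bounded by $\| f \|_{H^{\kappa_{i_0}}}$), and put all the other factors in $L^\infty$ (each bounded by $\| f \|_{H^{\kappa_i+1}}$). This produces
\[ \| u \|_{L^2} \lesssim \| f \|_{H^{\kappa_{i_0}}} \prod_{i \ne i_0} \| f \|_{H^{\kappa_i+1}}, \]
and the remaining task is purely combinatorial: to show that, under the constraints $\sum_i \kappa_i \le k$ and $\kappa_i \le \kappa_{i_0}$, at most one among the integer Sobolev indices $\kappa_{i_0}$ and $\{\kappa_i + 1\}_{i \ne i_0}$ equals $k$, while every other such index is bounded by $k-1$. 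Combined with the monotonicity $\| f \|_{H^s} \le \| f \|_{H^{s'}}$ for $s \le s'$, this yields \eqref{est:O_H2}.

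The main (mild) obstacle is the borderline configuration in the case $k = 2$ with $\kappa_{i_0} = 1$ and some other $\kappa_j = 1$: there the $\| f \|_{H^k}$ slot must be provided by an $L^\infty$ factor $\| \partial_x f \|_{L^\infty} \lesssim \| f \|_{H^2}$ rather than by the $L^2$ factor, which contributes only $\| \partial_x f \|_{L^2} \le \| f \|_{H^1} = \| f \|_{H^{k-1}}$. The combinatorial check above reduces to a short case analysis on the value of $\kappa_{i_0}$ (distinguishing $\kappa_{i_0} = k$, $\kappa_{i_0} = k-1$ and $\kappa_{i_0} \le k-2$); once this is dispatched, the proof is a routine application of Hölder's inequality and the 1D Sobolev embedding.
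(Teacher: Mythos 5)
Your argument is correct, and it takes a genuinely different (and arguably cleaner) route than the paper's own proof. The paper splits the estimate for $k\ge 2$ into three cases: (i) a derivative of order $k$ occurs; (ii) two derivatives of order $k-1$ occur, where it invokes the Gagliardo--Nirenberg inequality $\|h\|_{L^4}^2 \lesssim \|h\|_{L^2}^{3/2}\|\partial_x h\|_{L^2}^{1/2}$ to place both $(k-1)$-derivative factors symmetrically in $L^4$; and (iii) at most one derivative of order $k-1$ occurs, which is handled exactly as you propose. Your proposal avoids Gagliardo--Nirenberg entirely by using a single uniform mechanism --- highest-order factor in $L^2$, all others in $L^\infty$ via $H^1(\R)\hookrightarrow L^\infty(\R)$ --- and the case analysis on $\kappa_{i_0}$ you outline is precisely what is needed: if $\kappa_{i_0}=k$ all the other $\kappa_i$ vanish so the $L^\infty$ indices are $1\le k-1$; if $\kappa_{i_0}=k-1$ at most one further $\kappa_j$ can equal $k-1$ (only when $k=2$, since $3(k-1)>k$), and then that one $L^\infty$ factor is the unique $H^k$ slot while the $L^2$ factor contributes $H^{k-1}$; and if $\kappa_{i_0}\le k-2$ every index is $\le k-1$ and monotonicity $\|f\|_{H^{k-1}}\le \|f\|_{H^k}$ closes the bound. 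What the paper's $L^4$ route buys is a marginally sharper exponent balance $\|f\|_{H^{k-1}}^{3/2}\|f\|_{H^k}^{1/2}$ per factor in case (ii), which is more than what the stated inequality requires; your asymmetric $L^2\times L^\infty$ split reaches the claimed estimate directly and treats all subcases with the same argument.
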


\begin{proof}
In the case $k=1$, this is a direct application of the fact that $H^1(\R) \hookrightarrow L^\infty(\R)$. Indeed, derivatives occurs at most once on one factor of each term: the derivative is estimated in $L^2$, and the other $\ell-1$ factors are estimated in $L^\infty$ (which is controlled by the $H^1$ norm). For the terms without any derivative, one factor is estimated in $L^2$ and the others in $L^\infty$. Thus, \eqref{est:O_H1} is proved for $k=1$.

\medskip

If $k=0$, then we use $O_0^\ell(f)=O_1^\ell(f)$ and the above result for $k=1$.

\medskip

For $k \ge 2$, the statement is slightly more subtle. First, consider the terms where a derivative of order $k$ occurs. This term has the form $\beta \partial_x^k f_{1} 
\prod_{j} f_j^{\ell_j}$ with 
$\sum_{j} \ell_j=\ell-1$ and $\beta\in W^{2,\infty}(\m R)$; the factor $\partial_x^k f_{1}$  is estimated in $L^2$, and all the other factors (appearing without derivative) are estimated in $L^\infty$ that is controlled by the $H^{k-1}$-norm (as $k \ge 2$), yielding a final bound as in   \eqref{est:O_H2}. Second, we treat the other terms as follows, in two categories.

\smallskip

 For the terms where a derivative of order $k-1$ occurs, the $k-1$ derivative can appear at most twice (because otherwise, having three times such derivatives, it would mean $3(k-1) > k$ as $k \ge 2$ which contradicts the number of derivatives in Definition \ref{def:O}). 
 
 (1) For terms where two derivatives of order $k-1$ occur, we write them in the form $v=\beta \partial_x^{k-1} f_1 \partial_x^{k-1} f_2\cdot \prod_{j} (\partial_x^{\kappa_j} f_j(x))^{\ell_j}$ with $\kappa_j \le k-2$ and $\sum_{j} \ell_j=\ell-2$ where we allow that $f_1$ may coincide with $f_2$. 
Then, using the Gagliardo-Nirenberg inequality for some $h\in H^1(\R)$, we have
\begin{align} \label{est:GN}
\| h \|_{L^4}^2 \le \| h \|_{L^2} \| h \|_{L^\infty} \le \sqrt{2} \| h \|_{L^2}^{3/2} \| \partial_x h \|^{1/2}_{L^2},
\end{align} 
which combined with Cauchy-Schwarz, it yields
\begin{align*}
\| v \|_{L^2}^2 & \le\|\beta\|^2_{L^\infty}  \| \partial_x^{k-1} f_1 \|_{L^4}^2  \| \partial_x^{k-1} f_2 \|_{L^4}^2
\prod_{j} \|\partial_x^{\kappa_j} f_j \|_{L^\infty}^{2\ell_j}   \\
& \lesssim \prod_{s=1,2} \| f_s \|^{1/2}_{H^k} \| f_s \|_{H^{k-1}}^{3/2} \cdot \prod_{j} \| f_j \|_{H^{k-1}}^{2 \ell_j}  \lesssim  \prod_{s=1,2} \| f_s \|_{H^k} \| f_s \|_{H^{k-1}} \cdot
\prod_{j} \| f_j \|_{H^{k-1}}^{2 \ell_j}
\end{align*}
where the implicit constant depends only on $\|\beta\|_{L^\infty}$. This again gives the estimate \eqref{est:O_H2}.

\smallskip

{(2)} For the other terms, there is at most one derivative of order at $k-1$, the other are of order at most $k-2$. Then for such terms, the factor with highest order of derivative is estimated in $L^2$ and all the others in $L^\infty$ which is controlled by the 
$H^{k-1}$-norm, yielding \eqref{est:O_H2}.
\end{proof}

Also, we need the following type of estimates; the choice of considering homogeneous polynomials with coefficients in $W^{2, \infty}$ in Definition \ref{def:O} is in particular motivated by the possibility to perform twice the integration by parts in the following claim.

\begin{claim} \label{cl:O_IPP}
If $f, v \in H^1(\R)$, then\footnote{Since for $k\in \{0,1\}$, $O^\ell_k(f)=O^\ell_2(f)$, these estimates hold also for $O^\ell_0(f)$ and $O^\ell_1(f)$ for every $\ell\ge 1$.} for $\ell \ge 2$: 
\[ \left| \int O^\ell_2(f) dx \right| \lesssim \| f\|_{H^1}^\ell, \quad \left| \int O^1_{2}(f) v dx \right| \lesssim \| f \|_{H^1} \| v \|_{H^1}, \]
where the implicit constants depend on the $W^{1, \infty}$ norm of the coefficients in $O^\ell_2$.

If $f \in H^2(\R)$, then for $\ell \ge 2$:
\begin{align*}
\left| \int O^\ell_3(f) dx \right|  \lesssim \| f\|_{H^1}^{\ell-1}  \| f \|_{H^2}, \quad
\left| \int O^\ell_4(f) dx \right|  \lesssim \| f\|_{H^1}^{\ell-2}  \| f \|_{H^2}^2,
\end{align*}
where the implicit constants depend on the $W^{2, \infty}$ norm of the coefficients in $O^\ell_k(f)$. \end{claim}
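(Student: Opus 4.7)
The plan is to combine integration by parts with the Sobolev embedding $H^1(\m R) \hookrightarrow L^\infty(\m R)$ and the one-dimensional Gagliardo-Nirenberg interpolation $\|g\|_{L^4}^2 \lesssim \|g\|_{L^2}^{3/2}\|\partial_x g\|_{L^2}^{1/2}$ already used in \eqref{est:GN}. The $W^{2,\infty}$ regularity requirement on the coefficients $\beta$ in Definition \ref{def:O} is there precisely to allow up to two successive integrations by parts, which is the amount needed to treat the $O_4^\ell$ estimate. All boundary terms at $\pm\infty$ vanish because after each integration by parts every integrand is a sum of products in $W^{1,1}(\m R) \hookrightarrow C_0(\m R)$, using $f \in H^1$ (or $H^2$) together with $\beta \in W^{2,\infty}$.

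For $\int O_2^\ell(f)\, dx$ with $f \in H^1$, I would first integrate by parts once in any monomial containing a $\partial_{xx} f_{j_1}$-factor, transferring the extra derivative either to $\beta$ (producing $\partial_x\beta \in L^\infty$) or to another factor; after this reduction every remaining monomial carries at most two derivatives, distributed as first derivatives on distinct factors. Cauchy-Schwarz then places the two $\partial_x f$-factors in $L^2$ and the remaining $\ell - 2 \ge 0$ undifferentiated factors in $L^\infty$, giving $\|f\|_{H^1}^\ell$. The companion estimate $\bigl|\int O_2^1(f)\, v\, dx\bigr| \lesssim \|f\|_{H^1}\|v\|_{H^1}$ is treated analogously: the single problematic term $\int \beta \partial_{xx} f_j \, v\,dx$ becomes, after one integration by parts, $-\int (\partial_x\beta) v \partial_x f_j\, dx - \int \beta (\partial_x v)(\partial_x f_j)\, dx$, both bilinear in $(f, \partial_x f)$ and $(v, \partial_x v)$ and closed by Cauchy-Schwarz.

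For $\int O_3^\ell(f)\, dx$ and $\int O_4^\ell(f)\, dx$ with $f \in H^2$, I would integrate by parts to guarantee that no single factor carries more than two derivatives: one integration suffices in $O_3^\ell$ (a $\partial_x^3 f$-factor becomes a $\partial_{xx} f$-factor), whereas in $O_4^\ell$ a $\partial_x^4 f$-factor requires two integrations, which is precisely why $\beta \in W^{2,\infty}$ is imposed. After this reduction each monomial is controlled by placing one $\partial_{xx} f$-factor in $L^2$ (using $\|f\|_{H^2}$), any additional first-derivative factors in either $L^2$ or $L^4$, and the remaining factors in $L^\infty$. The only delicate monomials are $\beta \prod_{i=1}^3 \partial_x f_i \prod_{j \ge 4} f_j$ in the $O_3^\ell$ case and $\beta \prod_{i=1}^4 \partial_x f_i \prod_{j \ge 5} f_j$ in the $O_4^\ell$ case, where no $\partial_{xx} f$-factor is available for free; I would then invoke the Gagliardo-Nirenberg inequality on two of the first-derivative factors to place them in $L^4$, which produces exactly one (respectively two) factors of $\|f\|_{H^2}^{1/2}\|f\|_{H^1}^{1/2}$ per slot and yields the claimed homogeneity.

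The main technical obstacle is bookkeeping the derivative distribution after each integration by parts and checking that the exact powers of $\|f\|_{H^1}$ and $\|f\|_{H^2}$ stated in the claim are attained (and not exceeded). This is a careful but routine verification: once the normalisation step above has been carried out, the final bounds reduce to the same combination of Hölder's inequality, Gagliardo-Nirenberg and Sobolev embedding already used in the proof of Claim \ref{cl:Okl_2}, and no additional analytic input is required.
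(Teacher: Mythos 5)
Your proposal is correct and follows essentially the same approach as the paper's proof: reduce by density to smooth compactly supported $f$, integrate by parts at most twice (using $\beta\in W^{2,\infty}$) so that no single factor carries more than two derivatives, and then close the resulting monomials via Cauchy--Schwarz, the embedding $H^1(\m R)\hookrightarrow L^\infty(\m R)$, and the Gagliardo--Nirenberg inequality \eqref{est:GN}. The paper carries out the bookkeeping only for the $O^\ell_4$ estimate (exhibiting the three worst reduced monomials $\partial_{xx}f_1\partial_{xx}f_2\,p_{\ell-2}$, $\partial_{xx}f_1\partial_x f_2\partial_x f_3\,p_{\ell-3}$, $\partial_x f_1\partial_x f_2\partial_x f_3\partial_x f_4\,p_{\ell-4}$) and asserts that the others are similar; your write-up fills in the easier cases with the same ingredients and arrives at the same (or slightly sharper, e.g.\ $\|f\|_{H^1}^{\ell-1/2}\|f\|_{H^2}^{1/2}\le\|f\|_{H^1}^{\ell-1}\|f\|_{H^2}$) powers.
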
 

\begin{nb}
One should be aware that in $O^\ell_2(f)$, terms of the type $\partial^2_{x} f_1 \cdot f_2$ may occur, so that the notation $\int O^\ell_2(f) dx$ should be understood in the $H^{-1}-H^1$ duality.
Similarly, some terms in $\int O^\ell_4(f) dx$ should be understood in the $H^{-k}-H^k$ duality for $k=1$ or $2$.
\end{nb}

\begin{proof}
We prove only the last estimate, all the other estimates follow by the same argument (combined with Claim \ref{cl:Okl_2} or the arguments in its proof). Also, by a density argument, we can furthermore assume without loss of generality $f \in \q C_c^\infty(\m R)$. After at most two integrations by parts in the integral $\ds \int O^\ell_4(f) dx$, the coefficients of the new polynomials in $f$ and its derivatives can be estimated in $L^\infty$, and the worst terms  appearing are of the form:
\[ \int \partial_{xx} f_1 \partial_{xx} f_2 p_{\ell-2} dx, \quad \int \partial_{xx} f_1 \partial_{x} f_2  \partial_{x} f_3 p_{\ell-3} dx, \quad \text{or} \quad \int \partial_{x} f_1 \partial_{x} f_2  \partial_{x} f_3 \partial_{x} f_4 p_{\ell-4} dx, \]
where $p_l$ is a product of the $(f_j)_j$ of degree $l$ without derivatives in $f_j$ and $f_1$ may coincide with $f_2$, $f_3$ or $f_4$ in the above terms etc.; the other terms have less derivatives and are estimated by the same argument. Using the embedding $H^1(\R) \hookrightarrow L^\infty(\R)$ and \eqref{est:GN}, we get
\begin{align*}
\left| \int \partial_{xx} f_1 \partial_{xx} f_2 p_{\ell-2} dx \right| & \lesssim \| f \|_{L^\infty}^{\ell-2} \| f \|_{H^2}^2 \le  \| f \|_{H^1}^{\ell-2} \| f \|_{H^2}^2, \\
\left|  \int \partial_{xx} f_1 \partial_{x} f_2  \partial_{x} f_3 p_{\ell-3} dx \right| & \le \| f \|_{H^2} \| \partial_x f \|_{L^4}^2 \| f \|_{L^\infty}^{\ell-3} \lesssim  \| f \|_{H^1}^{\ell-3/2} \| f \|_{H^2}^{3/2}, \\
\left| \int \partial_{x} f_1 \partial_{x} f_2  \partial_{x} f_3 \partial_{x} f_4 p_{\ell-4} dx \right| & \lesssim \| \partial_x f \|_{L^4}^4 \| f \|_{L^\infty}^{\ell-4} \le \| f \|_{H^1}^{\ell-1} \| f \|_{H^2}. \qedhere
 \end{align*}

\end{proof}

\medskip

Now we state the modulation result for the time-dependent magnetisation:

\begin{prop}[Decomposition of the magnetisation] \label{lem:decomp} Let $w_* \in \{ w^\pm_* \}$. 
There exist two constants $C_2 >0$ and $\delta_2^0 >0$ such that the following holds. If $0 <\delta_2 < \delta_2^0$, $T>0$, $h \in L^\infty([0,T])$ and  $m \in \q C([0,T], \q H^2)$ is a solution to \eqref{ll} such that
\begin{equation} \label{est:m_w_delta}
\forall t \in [0,T], \quad \inf_{\zeta \in G} \| m(t) - \zeta . w_* \|_{H^1} < \delta_2,
\end{equation}
then there exist a Lipschitz gauge $g = (y, \phi): [0,T] \to G$ and a continuous map  $\e: [0,T] \to H^2$ 
such that
\begin{gather} 
m(t,x) =  g(t). (w_*(x) + \e(t,x))\in \m S^2, \label{def:dec_m_e}
\end{gather}
with the orthogonality conditions, i.e.,  for all $t \in [0,T]$, $\e(t)$ satisfies \eqref{eq:ortho} and 
\begin{align}
\forall x \in \m R, \quad |\e(t,x)|^2 + 2 \e(t,x) \cdot w_*(x) & =0. \label{intoS2}
\end{align}
Moreover, $\partial_t \e \in L^\infty([0,T], L^2)$ and 
\begin{gather} \label{est:g_e}
\forall t \in [0, T], \quad | \dot g (t)- \dot g_*(t)| \le C_2 (1+|h(t)|) \| \e (t) \|_{H^1}, \quad \| \e(t) \|_{H^1} \le C_2 \delta_2, 
\end{gather}
where $g_*=(y_*, \phi_*)$ is given in \eqref{eq_y_phi}.  Moreover, the gauge map $g$ is unique up to a constant in $\{0\}\times 2\pi \Z$.
\end{prop}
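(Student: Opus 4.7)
The strategy is to obtain the decomposition $m(t) = g(t).(w_* + \varepsilon(t))$ pointwise in time from the stationary modulation Lemma \ref{lem:mod}, and to control $\dot g$ by differentiating the orthogonality conditions along the \eqref{ll} flow and solving the resulting linear system for $(\dot y, \dot \phi)$.

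\emph{Construction of $(g, \varepsilon)$.} For each $t \in [0,T]$, hypothesis \eqref{est:m_w_delta} provides $\zeta(t) \in G$ with $\|(-\zeta(t)).m(t) - w_*\|_{H^1} < \delta_2$. Taking $\delta_2$ small enough and applying Lemma \ref{lem:mod} to $(-\zeta(t)).m(t)$ yields a small $g_0(t) \in G$ such that $g(t) := \zeta(t) + g_0(t)$ and $\varepsilon(t) := (-g(t)).m(t) - w_*$ satisfy \eqref{eq:ortho}, \eqref{intoS2}, with $\|\varepsilon(t)\|_{H^1} \le C_2 \delta_2$. The smoothness of the map $w \mapsto g$ in Lemma \ref{lem:mod} combined with $m \in \q C([0,T], \q H^2)$ yields continuity $g \in \q C([0,T], G)$ and $\varepsilon \in \q C([0,T], H^2)$ (via Remark \ref{rem:23}). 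Uniqueness modulo $\{0\} \times 2\pi \m Z$ follows from the uniqueness at each fixed $t$ in Lemma \ref{lem:mod} and from continuity in $t$.

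\emph{Equation for $\dot g$.} Using $m = g.(w_* + \varepsilon)$, the identities \eqref{formula4} and the gauge covariance $H(g.\cdot) = g.H(\cdot)$ (cf.\ \eqref{foot56}), the equation \eqref{ll} transforms into
\begin{align*}
\partial_t \varepsilon = \dot y\, \partial_x(w_* + \varepsilon) - \dot\phi\, e_1 \wedge (w_* + \varepsilon) + (w_* + \varepsilon) \wedge H(w_* + \varepsilon) - \alpha (w_* + \varepsilon) \wedge [(w_* + \varepsilon) \wedge H(w_* + \varepsilon)].
\end{align*}
Differentiating $\int \varepsilon \cdot \partial_x w_* \, dx = \int \varepsilon \cdot (e_1 \wedge w_*) \, dx = 0$ in $t$ yields a $2 \times 2$ linear system $A(\varepsilon) \dot g(t) = B(\varepsilon(t), h(t))$. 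At $\varepsilon = 0$, $A(0)$ coincides with the matrix \eqref{eq:d_gF}, invertible since $\gamma^2 < 1$, so $A(\varepsilon)$ remains invertible for $\|\varepsilon\|_{H^1}$ small enough. Computing $B(0, h(t))$ via $w_* \wedge \delta E(w_*) = 0$ and Lemma \ref{lem:comput_w*}, and comparing with \eqref{eq_y_phi}, one finds $A(0) \dot g_* = B(0, h(t))$, i.e.\ the precessing domain wall identity is recovered in the unperturbed case.

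\emph{Closing the estimate.} Writing
$$A(\varepsilon)(\dot g - \dot g_*) = [B(\varepsilon, h) - B(0, h)] + [A(0) - A(\varepsilon)] \dot g_*,$$
each bracket is a sum of integrals of the form $\int O_2^\ell(\varepsilon) \cdot \psi \, dx$ with $\ell \ge 1$, where $\psi$ (a combination of $\partial_x w_*$, $e_1 \wedge w_*$ and their derivatives) is smooth, exponentially localised, hence $W^{2,\infty}$. Integrating by parts twice to move the second-order derivatives from $\delta E(w_* + \varepsilon)$ onto $\psi$ and invoking Claim \ref{cl:O_IPP}, each bracket is bounded by $(1+|h|) \|\varepsilon\|_{H^1}$, the factor $(1+|h|)$ reflecting the affine dependence of $H$ on $h$ and the bound $|\dot g_*| \lesssim |h|$ from \eqref{eq_y_phi}. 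Inverting $A(\varepsilon)$ then gives $|\dot g(t) - \dot g_*(t)| \le C_2 (1 + |h(t)|) \|\varepsilon(t)\|_{H^1}$, so $\dot g \in L^\infty([0,T])$ and $g$ is Lipschitz; plugging back into the formula for $\partial_t \varepsilon$ and using $H(m(t)) \in L^2$ for $m(t) \in \q H^2$ gives $\partial_t \varepsilon \in L^\infty([0,T], L^2)$. The main obstacle lies in the careful expansion of $B(\varepsilon, h) - B(0, h)$ as a polynomial in $\varepsilon$ and its derivatives with smooth, exponentially localised coefficients, so that the two algebraic cancellations (stationary $w_* \wedge \delta E(w_*) = 0$ and dynamical $A(0) \dot g_* = B(0, h)$) leave a remainder at least linear in $\varepsilon$; once this structure is isolated, the bounds follow routinely from Claims \ref{cl:Okl_1}--\ref{cl:O_IPP}.
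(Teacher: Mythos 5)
Your overall strategy coincides with the paper's: apply Lemma \ref{lem:mod} at each time, then differentiate the orthogonality conditions in $t$ and invert the $2\times 2$ linear system whose leading part is \eqref{eq:d_gF}. The cancellation structure you isolate --- $w_*\wedge\delta E(w_*)=0$ at the stationary level and the identity \eqref{eq-funda} recovering $A(0)\dot g_*=B(0,h)$ --- is exactly the heart of the estimate, and the $O^\ell_k$ bookkeeping is the right way to close it. However, two steps are not justified as written, and they correspond to the most delicate parts of the paper's proof.

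First, continuity (indeed, mere measurability) of the gauge $t\mapsto g(t)$. The hypothesis gives, for each $t$, \emph{some} $\zeta(t)\in G$ making $\|(-\zeta(t)).m(t)-w_*\|_{H^1}$ small, but this selection is not furnished as a continuous function of $t$: the action $g\mapsto g.w_*$ is $2\pi$-periodic in $\phi$ and $\zeta(t)$ may jump between branches. Consequently $t\mapsto(-\zeta(t)).m(t)$ need not be continuous in $t$, and you cannot directly invoke the smoothness of the implicit function of Lemma \ref{lem:mod} to conclude $g\in\q C([0,T],G)$. The paper first constructs a \emph{smooth} coarse gauge $g_0$ on $[0,T]$ by hand (uniform continuity of $m$, piecewise-affine interpolation of endpoint gauges, and Claim \ref{claim:beauty} controlling $|g-\tilde g|$ by $\|g.w_*-\tilde g.w_*\|_{H^1}$ up to a constant in $\{0\}\times 2\pi\m Z$, which allows a globally consistent branch choice) and only then applies Lemma \ref{lem:mod} to $(-g_0(t)).m(t)$.

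Second, the regularity needed to compute $\dot g$ and $\partial_t\varepsilon$. You differentiate $\int\varepsilon\cdot\partial_x w_*\,dx=0$ in $t$, derive the bound $|\dot g-\dot g_*|\lesssim(1+|h|)\|\varepsilon\|_{H^1}$, and then conclude that $g$ is Lipschitz because $\dot g\in L^\infty$. This is circular: one must justify that $g$ is a.e.\ differentiable \emph{before} writing $\dot g$. For $m\in\q C([0,T],\q H^2)$, equation \eqref{ll} gives only $\partial_t m\in L^\infty([0,T],L^2)$, whereas the implicit function of Lemma \ref{lem:mod} is Lipschitz for the $H^1$ topology, so Lipschitz regularity of $g$ does not follow from this alone. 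The paper circumvents this by first performing the computation for $m\in\q C([0,T],\q H^3)$ (then $\partial_t m\in L^\infty([0,T],H^1)$, so $t\mapsto m(t)$ is Lipschitz with values in $\q M$ and therefore $g$ is Lipschitz), establishing the estimate with constants independent of the $\q H^3$ norm, and finally passing to the limit for $\q H^2$ data by an Ascoli-type argument. Your proof needs such a regularisation step to close.
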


We emphasise that the bounds in \eqref{est:g_e} (in particular $C_2$) do not depend on the length $T>0$ of the interval $[0,T]$.

\begin{proof} 
Let $\delta_2^0>0$ be chosen later and $0 < \delta_2<\delta^0_2$.

\medskip

\emph{Step 1. Construction of $g$ and $\e$}. First, observe that one can choose a smooth gauge for a map $m$ satisfying \eqref{est:m_w_delta}, i.e.,  for some universal constant $C_2'$ (not depending on $m$, $T$ or $\delta_2$), there exists a smooth gauge $g_0 : [0,T] \to G$ such that
\begin{align}
\label{def:tilde_g}
\forall t \in [0,T], \quad  \| m(t) - g_0(t).w_* \|_{H^1} \le C_2' \delta_2.
\end{align}
Indeed, as $m: [0,T] \to w_* + H^1$ is continuous (by \eqref{est:m_w_delta} and Remark \ref{rem23}), and so uniformly continuous, there 
exists a finite number of times $0 = t_0 < t_1 < \cdots < t_N = T$ such that for $k =0,\dots, N-1$,
\[ \forall t \in [t_k,t_{k+1}], \quad \| m(t) - m(t_k) \|_{H^1} \le \delta_2. \] 
Also, in view of \eqref{est:m_w_delta}, for all $k =0, \dots, N$, there exists $\zeta_k\in G$ such that $\| m(t_k) - \zeta_k.w_* \|_{H^1} \le \delta_2$. In particular, $\| \zeta_k.w_* - \zeta_{k+1}.w_*  \|_{H^1} \le 3 \delta_2$. To prove \eqref{def:tilde_g}, we need the following claim:

\begin{claim} \label{claim:beauty} 
There exist $\delta'>0$ and $C'>0$ such that the following holds. If $g, \tilde g\in G$ such that $\| g.w_* - \tilde g.w_* \|_{H^1} \le \delta'$, then there exists $n \in \m Z$ such that 
\begin{equation}
\label{ineg:beauty}
| g -\big (\tilde g+ (0,2\pi n)\big)| \le C' \| g.w_* - \tilde g.w_* \|_{H^1}.
\end{equation}
Moreover, if $g, \tilde g:[0,T]\to G$ are continuous gauge functions such that $\| g(t).w_* - \tilde g(t).w_* \|_{H^1} \le \delta'$ for every $t\in [0,T]$ then \eqref{ineg:beauty} holds for every $t\in [0,T]$ for a constant $n\in \Z$ independent of $t$.
\end{claim}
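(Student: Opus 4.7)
The plan is to reduce the two-point statement to a one-point statement about $g \mapsto g.w_*-w_*$, and then to quantify the fact that this map has $\{0\}\times 2\pi\Z$ as fibre over $0$. Since the $G$-action is an $H^1$-isometry (translations and rotations preserve both $L^2$ norms and commute with $\partial_x$), writing $g'= g - \tilde g$ gives $\|g.w_* - \tilde g.w_*\|_{H^1} = \|g'.w_* - w_*\|_{H^1}$, so it suffices to prove that $\|g'.w_*-w_*\|_{H^1}\le \delta'$ forces $g'$ to be within $C'\|g'.w_*-w_*\|_{H^1}$ of some $(0,2\pi n)$. The strategy is then to establish two complementary estimates: a \emph{linear lower bound} $\|g'.w_*-w_*\|_{L^2}\ge c|g'|$ for $g'$ near the origin, and a \emph{coercivity} estimate that keeps $g'$ close to the lattice $\{0\}\times 2\pi\Z$ as soon as $\|g'.w_*-w_*\|_{H^1}$ is small.

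For the linear lower bound near $g'=0$, I would use the smoothness of the map $\Psi: g'\mapsto g'.w_*-w_*$ from $\R^2$ into $H^1$ (which is valid by Remark~\ref{rem23}) together with the identities \eqref{formula4}, which give $\Psi(0)=0$ and $d\Psi(0)\cdot(y,\phi)=-y\,\partial_x w_*+\phi\, e_1\wedge w_*$. The last inequality of Lemma~\ref{lem:comput_w*} then provides the quantitative injectivity $\|d\Psi(0)\cdot g'\|_{L^2}\ge c_\gamma |g'|$, and the $\q C^2$-smoothness of $\Psi$ yields $\|\Psi(g')-d\Psi(0)\cdot g'\|_{L^2}=O(|g'|^2)$; hence there exists $\delta_*>0$ and $C_*>0$ such that
\[
|g'|\le \delta_*\quad\Longrightarrow\quad \|\Psi(g')\|_{L^2}\ge C_*|g'|.
\]

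The main difficulty is the global part: ensuring that small $\|\Psi(g')\|_{H^1}$ forces $g'$ close to $\{0\}\times 2\pi\Z$ (and not just close in some local sense). Periodicity of $R_\phi$ in $\phi$ allows to assume $\phi\in(-\pi,\pi]$, after which the argument splits into two regimes. For $|y|$ large, I would use the explicit shape of $w_*$: since $w_*(x)\to\mp e_1$ exponentially as $x\to\mp\infty$ and $R_\phi$ fixes $\pm e_1$, an elementary computation on the interval $x\in[1,y-1]$ (assuming $y>0$, the case $y<0$ being symmetric) gives $\|\Psi(g')\|_{L^2}^2\ge c\,|y|$ for $|y|$ larger than some $Y_0$ depending only on $\gamma$. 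For $(y,\phi)$ in the compact set $\{|y|\le Y_0,\,\phi\in[-\pi,\pi]\}$ and at distance at least $\delta_*$ from $\{(0,0),(0,\pm\pi)\}$ (the only possible zeros of $\Psi$ in that region, modulo the identification $\phi=\pm\pi$), continuity of $\Psi$ and injectivity on $\R\times(\R/2\pi\Z)\setminus\{(0,0)\}$ produce a uniform lower bound $\|\Psi(g')\|_{L^2}\ge \epsilon_*>0$. Choosing $\delta'=\min(\epsilon_*,C_*\delta_*)$ and combining the two regimes yields that $\|\Psi(g')\|_{H^1}\le\delta'$ forces $|g'-(0,2\pi n)|\le \delta_*$ for some $n\in\Z$, and then the linear bound applied to $g'-(0,2\pi n)$ (which has the same image under $\Psi$) gives \eqref{ineg:beauty} with $C'=1/C_*$.

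For the continuous version, once the pointwise statement is established, each $t\in[0,T]$ selects some $n(t)\in\Z$. To see that $n(t)$ can be chosen independent of $t$, I would note that by uniqueness of the closest lattice point (granted $\delta'<C_*\pi$, so that the neighborhoods $\{|g'-(0,2\pi n)|\le\delta_*\}$ are disjoint across $n$), $n(t)$ is fully determined by $g(t)-\tilde g(t)$, namely as the integer nearest to $(\phi(t)-\tilde\phi(t))/2\pi$. The map $t\mapsto n(t)$ is then continuous from $[0,T]$ into the discrete set $\Z$, hence constant. The expected obstacle is the compactness argument in the intermediate regime: it relies on global injectivity of $g'\mapsto g'.w_*$ modulo $2\pi$ in $\phi$, which however is immediate from the fact that $g'.w_*=w_*$ means that both the location of the transition and the value at any fixed interior point are fixed, forcing $y=0$ and $\phi\in 2\pi\Z$.
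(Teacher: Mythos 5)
Your plan is a valid alternative to the paper's argument, but let me flag both the difference of route and a slip in the execution.

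The paper's proof proceeds by contradiction: it assumes a sequence $(g_j) = (y_j,\phi_j)$ with $\phi_j \in [-\pi,\pi]$, $\|g_j.w_* - w_*\|_{H^1} \to 0$ but $|g_j| \ge j\,\|g_j.w_* - w_*\|_{H^1}$, extracts a subsequential limit $(y_\infty,\phi_\infty)$, shows the limit must be $(0,0)$ (since $\cos\theta_*$ is not periodic and $\phi_\infty \in [-\pi,\pi]$), and then invokes the Taylor expansion $g.w_* - w_* = -y\,\partial_x w_* + \phi\, e_1\wedge w_* + O(|g|^2)$ together with Lemma~\ref{lem:comput_w*} to reach the contradiction. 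You instead argue directly, splitting into a local linear bound near $(0,0)$ (via the same Taylor expansion and Lemma~\ref{lem:comput_w*}), a large-$|y|$ regime using the explicit shape of the domain wall, and a compactness argument on an intermediate set. The two approaches are essentially equivalent in content — your compactness step recovers the paper's sequential argument — but yours is constructive and perhaps more transparent about how the constant $C'$ and the threshold $\delta'$ arise. Your treatment of the continuous case is also different (the paper estimates $|n(t)-n(t')|<1$ for $|t-t'|$ small by uniform continuity of $g$ and $\tilde g$; you argue that the nearest-integer map is locally constant once the $\delta_*$-neighbourhoods of the lattice are disjoint); both are fine.

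However, there is a genuine error in the intermediate compactness step: $(0,\pm\pi)$ are \emph{not} zeros of $\Psi(g') = g'.w_* - w_*$. Indeed $R_\pi m = (m_1,-m_2,-m_3)$, so $R_\pi w_*^\pm = w_*^\mp \ne w_*^\pm$ by \eqref{formul1}; the fibre of $\Psi$ over $0$ is exactly $\{0\}\times 2\pi\Z$, which in the strip $\phi \in [-\pi,\pi]$ reduces to $\{(0,0)\}$ alone. As written, your compactness argument only yields a uniform lower bound on $\|\Psi(g')\|$ away from the three-point set $\{(0,0),(0,\pm\pi)\}$; from $\|\Psi(g')\|_{H^1} < \delta'$ you could then conclude only that $g'$ is within $\delta_*$ of one of those three points, and the branches near $(0,\pm\pi)$ do not produce the required inequality $|g' - (0,2\pi n)| \le \delta_*$ for any integer $n$. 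The fix is immediate — since $\Psi(0,\pm\pi)\ne 0$, by continuity $\|\Psi(g')\|$ is bounded below on a neighbourhood of $(0,\pm\pi)$ as well, so one should simply exclude only a neighbourhood of $(0,0)$ in the compactness step (or reduce $\delta'$ to stay below that extra bound) — but the argument as stated does not close.
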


\begin{nb}
By \eqref{here-a}, $\| g.w_* - \tilde g.w_* \|_{H^1}\le C|g-\tilde g|$ for every $g, \tilde g\in G$ where $C=C(\|w_*\|_{\q H^2})$. Inequality \eqref{ineg:beauty} somehow provides an inverse inequality (up to an additive constant in $\{ 0 \} \times 2\pi \m Z$); note that $\big( \tilde g+ (0,2\pi n) \big).w_*=\tilde g.w_*$).
\end{nb}

\begin{proof}[Proof of claim] 
Indeed, assume by contradiction that the claim fails. As $ \| g.w_* - \tilde g.w_* \|_{H^1} =  \| (g-\tilde g).w_* - w_* \|_{H^1}$, there would exist a sequence $(g_j)_j = (y_j,\phi_j)_j$ in $G$ such that $\| g_j . w_* - w_* \|_{H^1} \to 0$ and for all $j$, $\phi_j \in [-\pi,\pi]$  and 
\begin{equation} \label{cl:G_absurd} |g_j| \ge j \| g_j . w_* - w_* \|_{H^1}. 
\end{equation}
Up to a subsequence, we can further assume that $\phi_j \to \phi_\infty \in [-\pi,\pi]$ and $y_j \to y_\infty \in \m R$ (because otherwise, $\| \tau_{y_j} R_{\phi_\infty} w_* - w_* \|_{\dot H^1} \to 2 \| w_* \|_{\dot H^1} >0$ as $|y_j| \to +\infty$ which contradicts the assumption $\| g_j. w_* - w_* \|_{\dot H^1} \to 0$ as $j\to +\infty$). As $\| g_j . w_* - w_* \|_{H^1} \to 0$ and $g_j\to (y_\infty, \phi_\infty)$, it follows that $(y_\infty, \phi_\infty).w_* = w_*$; in particular, for the first component, we have $\tau_{y_\infty} \cos \theta_*=\cos \theta_*$ yielding $y_\infty=0$ (as $\cos \theta_*$ is not periodic in $\R$) and for the other component, $\cos(\phi_\infty+\f_*)=\cos \f_*$ and $\sin(\phi_\infty+\f_*)=\sin \f_*$ in $\R$ yielding $\phi_\infty =0$ (as $\phi_\infty \in [-\pi,\pi]$). Now as $g =(y,\phi) \to (0,0)$, by Taylor's expansion and \eqref{2star},
\[ g.w_* - w_* = - y \partial_x w_* + \phi e_1 \wedge w_* + O(|g|^2), \]
so that by Lemma \ref{lem:comput_w*},
\[ \|  g.w_* - w_* \|_{L^2} \ge c_\gamma |g| + O(|g|^2) \ge c_\gamma |g|/2 \quad \textrm{as} \, g\to (0,0). \]
This is a contradiction with \eqref{cl:G_absurd} (as $g_j\to (0,0)$). 

It remains to consider the case of continuous gauge functions $g, \tilde g:[0,T]\to G$. From the above, we dispose of a function $n(t) \in \m Z$ such that \eqref{ineg:beauty} holds at time $t$. By decreasing $\delta'$ if needed, we can assume that $\delta' C'\le \frac 1 5$. By uniform continuity of $g$ and $\tilde g$, there exists $\eta>0$ such that $|t-t'|<\eta$ implies 
$|g(t)-g(t')|, |\tilde g(t)-\tilde g(t')|<\frac1{5}$. Then for such $t,t'$,
\begin{align*}
|n(t)-n(t')|& \le  | g(t) -\big (\tilde g(t)+ (0,2\pi n(t))\big)|+ | g(t') -\big (\tilde g(t')+ (0,2\pi n(t'))\big)|\\
&\quad \quad +|\tilde g(t') -\tilde g(t)|+|g(t') - g(t)|<\frac4{5}.
\end{align*}
This shows that $n$ is locally constant, thus constant in $[0,T]$.
\end{proof}

\medskip

{\it Proof of \eqref{def:tilde_g}}. Using the above claim, provided that $3\delta^0_2 \le \delta'$ (this is a first choice of $\delta^0_2$), we can further change $\zeta_k$ (by adding $(0,2\pi n_k)$ for some well chosen $n_k \in \m Z$) so that for all $k=0, \dots, N-1$,
\[ |\zeta_k - \zeta_{k+1}| \le C' \| \zeta_k.w_* - \zeta_{k+1}.w_*  \|_{H^1} \le 3 C' \delta_2. \]
We now consider $g_1:[0,T]\to G$ affine on each segment $[t_k,t_{k+1}]$ such that $g_1(t_k) = \zeta_k$ for $k = 0,\dots, N$; then we choose some smooth function $g_0:[0,T]\to G$  such that $\| g_1 - g_0 \|_{\q C([0,T])} \le \delta_2$. For all $t \in [t_k,t_{k+1}]$, as $g_1$ is affine on this interval,
\[ \| g_1(t_k). w_* - g_1(t). w_* \|_{H^1} \le C \| w_* \|_{\q H^2} |g_1(t_k) - g_1(t)| \le  C \| w_* \|_{\q H^2} |g_1(t_k) - g_1(t_{k+1})| \le C_2'' \delta_2. \]
Then for any $t \in [0,T]$, denoting $k$ such that $t \in [t_k,t_{k+1}]$, one has
\begin{align*}
\| m(t) - g_0(t). w_* \|_{H^1} & \le \| m(t) - m(t_k) \|_{H^1} + \| m(t_k) - g_1(t_k) .w_* \|_{H^1}  \\
& \qquad + \|  g_1(t_k) .w_* - g_1(t). w_* \|_{H^1}  + \|  g_1(t) .w_* - g_0(t). w_* \|_{H^1} \\
& \le \delta_2 + \delta_2 + C_2'' \delta_2 +  C\delta_2 \le C_2' \delta_2,
\end{align*}
which proves \eqref{def:tilde_g}.

\medskip

We now go back to the construction of $g$ and $\e$ in \eqref{def:dec_m_e}. We can assume that $C_2' \delta^0_2 \le \delta_1$ with $\delta_1$ given in Lemma \ref{lem:mod} by possibly decreasing $\delta^0_2$. As for all $t \in [0,T]$, $\| (-g_0(t)). m(t) - w_* \|_{H^1} \le \delta_1$, Lemma \ref{lem:mod} ensures the existence of a constant $C_1>0$ and of a gauge $g_2(t) \in G$ and a map $\e(t) \in H^1$ such that \eqref{intoS2} and \eqref{eq:ortho} hold together with
\[  (-g_0(t)). m(t) = g_2(t). (w_* + \e(t)), \quad |g_2(t)| + \| \e(t) \|_{H^1} \le C_1 \| (-g_0(t)). m(t) -  w_* \|_{H^1} \le C_1C_2' \delta_2. \]
Then $g = g_0 + g_2$ is the desired gauge modulation, in particular, $\e$ satisfies the estimate in \eqref{est:g_e} for $C_2\ge C_1C_2'$. Moreover, recalling the smooth implicit function $\Psi$ constructed in the proof of Lemma~\ref{lem:mod}, we know that 
\[ g_2(t)=\Psi\big((-g_0(t)). m(t)\big) \]
is continuous in $t$ (as $t\mapsto (-g_0(t)). m(t)$ belongs to $\q C([0,T],\q H^2)$). Thus, $g$ is continuous in $t$ and $\e = (-g). m -w_* \in \q C([0,T],H^2(\m R, \m R^3))$ (as mentioned in Remark \ref{rem:23}). 

\medskip

It remains to prove that $g$ and $\e$ are Lipschitz in $t$ and $g$ satisfies the first bound in \eqref{est:g_e}. However, even if \eqref{ll} yields $\partial_t m \in L^\infty([0,T],L^2)$, this is not enough  \emph{a priori} to infer that $\dot g \in L^\infty$ and $\partial_t \e \in L^\infty([0,T],L^2)$ (because the implicit function $\Psi$ in the proof of Lemma~\ref{lem:mod} is smooth for the $H^1$ topology, but smoothness in the $L^2$ topology would require additional arguments). This is nevertheless the case if one furthermore assume more regularity on $m$, i.e., $m \in  \q C([0,T],\q H^3)$ is a solution to \eqref{ll} in which case one sees by inspection that $\delta E(m) \in \q C([0,T],H^1)$ and so $\partial_t m \in L^\infty([0,T],H^1)$. This means that $t \mapsto m(t)$ is Lipschitz with values in $\q M$ and from there, as $\Psi$ is Lipschitz on $\q M$, $g$ is also a Lipschitz function: $\dot g \in L^\infty([0,T])$. It follows that $\partial_t \e \in L^\infty([0,T],H^1)$. This allows to perform computations of the equations for $g$ and $\e$, and to obtain their Lipschitz regularity together with the first estimate in \eqref{est:g_e} in Steps 2-3 below. As these bounds will not depend on the norm of $\| m \|_{\q C([0,T],\q H^3)}$, a classical approximation argument in Step~4 will show that the bounds \eqref{est:g_e} still hold under the assumption $m \in  \q C([0,T],\q H^2)$. 

\medskip

For the next steps 2 and 3, we therefore assume that $m \in  \q C([0,T],\q H^3)$.

\medskip

\emph{Step 2. Preliminary equation on $\partial_t \e$}. To prove the first statement of \eqref{est:g_e}, we start by estimating the terms in $\e$ in \eqref{ll}. We emphasise that all the equalities below are between functions in $L^\infty([0,T],L^2)$. With the notation $g=g(t)=(y(t), \phi(t))$, we have by the formulas in Appendix \ref{sec:tool}, the linearity of $\delta E(\cdot)$ and  \eqref{foot56}: 
\begin{align} 
\label{eq:dt_m_decomp} \partial_t m & = \partial_t (g. (w_* + \e)) = - \dot y g. (\partial_x w_* + \partial_x \e) + \dot \phi  e_1 \wedge g.(w_* +\e) + g . \partial_t \e, \\
\nonumber H(m) & = - \delta E(g.w_*) - \delta E(g.\e) + h e_1=  - g. \delta E(w_*)  - g. \delta E(\e) + h e_1.
\end{align}
We now expand at order $0$ in $\e$ (that is, terms which are at least linear in $\e$ are treated in $O$), tracking only the dependence in $h$ (so that the implicit polynomial in the $O$ notation below do not depend on $h$). 
Observe that $\delta E(\e) = O^1_2(\e)$, so that as $w_*$ is smooth and $w_*\wedge \delta E(w_*)=0$, Claim \ref{cl:Okl_1} yields 
\begin{align}
\nonumber m \wedge H(m) & =g. \left( (w_* + \e) \wedge ( - \delta E(w_*)  + O^1_2(\e) + h e_1) \right) \\
\nonumber & = g. \big(h (w_*+\e) \wedge e_1 + O^1_0(\e) + O^1_2(\e) +  O^1_0(\e)  O^1_2(\e)\big) \\
\label{use34} & = g. \big(h w_* \wedge e_1+hO^1_0(\e) + O^1_2(\e) + O^2_2(\e)\big), \\
\nonumber m \wedge (m \wedge H(m)) & = g. \left( (w_* + \e) \wedge (h  w_* \wedge e_1 +hO^1_0(\e)+ O^1_2(\e) + O^2_2(\e)) \right) \\
 & = g.\big(h w_*  \wedge (  w_* \wedge e_1) + h(O^1_0(\e)+O^2_0(\e))+O^1_2(\e) + O^2_2(\e) + O^3_2(\e)\big). 
\label{use35} 
\end{align}
Therefore, by \eqref{eq:dt_m_decomp}, \eqref{use34} and \eqref{use35}, the equation of $\e$ at order $0$ is:
\begin{align}
\nonumber \partial_t \e & = (-g). \partial_t m +  \dot y (\partial_x w_* + \partial_x \e) - \dot \phi  e_1 \wedge (w_* +\e) \\
\nonumber & = (-g). \big(m \wedge H(m) - \alpha m \wedge (m \wedge H(m))\big) +  \dot y (\partial_x w_* + \partial_x \e) - \dot \phi  e_1 \wedge (w_* +\e) \\
\nonumber & = h w_* \wedge e_1  - \alpha h w_*  \wedge (  w_* \wedge e_1) +  \dot y \partial_x w_* -  \dot \phi  e_1 \wedge w_* \\
\nonumber & \qquad  + \dot g \cdot O^1_1(\e) +h(O^1_0(\e)+O^2_0(\e))+ O^1_2(\e) + O^2_2(\e) + O^3_2(\e)\\
\label{eq:e_order_0} & = (\dot y-\dot y_*) \partial_x w_* -  (\dot \phi-\dot \phi_*)  e_1 \wedge w_* \\
\nonumber & \qquad+ \dot g \cdot O^1_1(\e) +h(O^1_0(\e)+O^2_0(\e))+ O^1_2(\e) + O^2_2(\e) + O^3_2(\e),
\end{align}
where we used \eqref{eq-funda}.

\bigskip

\emph{Step 3. Equation on $\dot g$}. We use here the orthogonality conditions \eqref{eq:ortho}: differentiating them with respect to time, as $\partial_t \e \in L^\infty([0,T], H^1)$, we get
\begin{align} \label{eq:d_ortho}
\int \partial_t \e \cdot \partial_x w_*\, dx = 0 \quad \text{and}  \int \partial_t \e \cdot (e_1 \wedge w_*) \, dx =0. 
\end{align}
Combined with Lemma \ref{lem:comput_w*}, we obtain the following equalities between functions (of time) in $L^\infty([0,T])$: using \eqref{eq:e_order_0}, the first relation in \eqref{eq:d_ortho} gives
\begin{align*}
 \frac{2}{\sqrt{1-\gamma^2}}   (\dot y-\dot y_*) +  \frac{2\gamma}{\sqrt{1-\gamma^2}} (\dot \phi-\dot \phi_*)  = O( |\dot g| + 1 +  \| \e \|_{H^1}^2) \| \e \|_{H^1}+ O(|h|( \| \e \|_{H^1}+ \| \e \|_{H^1}^2)),
\end{align*}
where we used Claims \ref{cl:Okl_1}, \ref{cl:Okl_2} and \ref{cl:O_IPP} (and that $\partial_x w_*$ is smooth and localised, and so belongs to $H^1$). As $\dot g_*=O(|h|)$ by \eqref{eq_y_phi}, this rewrites as
\[  (\dot y- \dot y_*)+ \gamma (\dot \phi-\dot \phi_*)  +O( \| \e \|_{H^1}) |\dot g-\dot g_*|  =   O( 1 +  \| \e \|_{H^1}^2) \| \e \|_{H^1}(1+ |h|). \]
Similarly, the second relation in \eqref{eq:d_ortho} together with \eqref{eq:e_order_0} yields
\begin{align*}
 \frac{2\gamma}{\sqrt{1-\gamma^2}} ( \dot y-\dot y_*) + \frac{2}{\sqrt{1-\gamma^2}} (\dot \phi-\dot \phi_*)= O( |\dot g| + 1 +  \| \e \|_{H^1}^2) \| \e \|_{H^1}+O(|h|( \| \e \|_{H^1}+ \| \e \|_{H^1}^2));
\end{align*}
furthermore, 
\[  \gamma  (\dot y-\dot y_*) + (\dot \phi-\dot \phi_*) +O( \| \e \|_{H^1}) |\dot g-\dot g_*|=  O( 1 +  \| \e \|_{H^1}^2) \| \e \|_{H^1}(1+ |h|). \] 
As $ \| \e \|_{H^1} \le C \delta_2 \le 1$ (by possibly decreasing $\delta^0_2$), we equivalently get
\[ M_\e (\dot g-\dot g_*) =  O((1+ |h|)\| \e \|_{H^1}) \]
where $M_\e = \begin{pmatrix}
1 & \gamma \\
\gamma & 1
\end{pmatrix} + O(\| \e \|_{H^1})$ is invertible as soon as $\| \e \|_{H^1} \le C_2 \delta_2$ is smaller than a certain constant depending only on $\gamma$, with inverse
\[ M_\e^{-1} = \frac{1}{1-\gamma^2} \begin{pmatrix}
1 & -\gamma \\
- \gamma & 1
\end{pmatrix} + O(\| \e \|_{H^1}). \]
We therefore get $\dot g=\dot g_*+O((1+|h|) \| \e \|_{H^1})$ which is the $\dot g$ estimate in \eqref{est:g_e}. The proof is complete in the case when $m \in \q C([0,T],\q H^3)$.  

\bigskip

\emph{Step 4. Limiting argument}. We treat now the general case $m \in \q C([0,T],\q H^2)$ satisfying \eqref{est:m_w_delta}. There exists a sequence $(m^{n}_{0})_n$ in $\q H^3$ such that $m^{n}_{0} \to m(0)$ in $\q H^2$. Denote $m^{n}$ the solution to \eqref{ll} given by Theorem \ref{th:lwp} with initial data $m^{n}(0) =m^{n}_{0}$. By the continuity of the flow in Theorem \ref{th:lwp}, $m^{n}$ is defined on $[0,T]$ at least for large $n$, and
\be
\label{67} 
\| m^{n} - m \|_{\q C([0,T], \q H^2)} \to 0 \quad \text{as} \quad n \to +\infty. 
\ee
In particular, for large enough $n$, $m^{n}$ also satisfies  \eqref{est:m_w_delta} (recall Proposition \ref{prop:conv_H_H1}: since the first component $m_1(t)$ is not constant by  \eqref{est:m_w_delta}, then $\| m^{n}(t) - m(t) \|_{H^1}$ is small provided $\| m^{n}(t) - m(t) \|_{\q H^1}$ is small). We can hence apply Steps 1, 2 and 3 to $m^{n}$: for large enough $n$, there exists a  decomposition $m^{n} = g^{n}.(w_* + \e^{n})$ with the Lipschitz gauge function $g^n=(y^n, \phi^n):[0,T]\to G$ such that 
\be 
\label{88}\forall t \in [0,T], \quad | \dot g^{n}(t) - \dot g_*(t) | \le  C_2(1+ |h(t)|) \| \e^{n} (t) \|_{H^1}, \quad 
\| \e^{n}(t) \|_{H^1} \le C_2 \delta_2. 
\ee
We can assume without loss of generality that for all $n$, $\phi^n(0) \in [-\pi,\pi]$. 
 As $m^{n}(0) \to m(0)$ in $H^1$ and $\| \e^n(0) \|_{H^1}\le C_2\delta_2$, we have for large enough $N$ and $n \ge N$, $\|g^n(0).w_* - g^N(0).w_*\|_{H^1}\le 4C_2 \delta_2$. Using Claim \ref{claim:beauty} (up to further decreasing $\delta_2^0$), we infer $|g^n(0) - g^N(0)| \le C_2'' \delta_2$ (as $\phi^n(0), \phi^N(0)  \in [-\pi,\pi]$). In particular, $(g^n(0))_n$ is bounded, and together with \eqref{88}, we  conclude that $(g^n)_n$ is bounded in $W^{1, \infty}([0,T])$. Hence, up to a subsequence, Ascoli's theorem yields that $g^n \to g$ in $\q C([0,T])$ and $\dot g^n\to \dot g$ weakly-$*$ in $L^\infty([0,T])$ to a Lipschitz limit gauge function $g\in W^{1, \infty}([0,T], G)$. This leads by \eqref{67} to 
\[ \e^n = (-g^n).m^n - w_* \to (-g).m - w_*=: \e \quad \text{in} \quad \q C([0,T],H^2). \]
Furthermore, as $\dot g^n - \dot g_* = O((1+ |h|)\| \e^n \|_{H^1})$, we infer that $\dot g - \dot g_* = 
O((1+ |h|)\| \e \|_{H^1})$; also, the properties of $\e^n$ in \eqref{intoS2} and the orthogonality conditions \eqref{eq:ortho} for every $t\in [0,T]$ transfer to the limit $\e$ as $n\to \infty$ as well as the estimate $\|\e(t)\|_{H^1}\le C_2 \delta_2$. Finally, by inspection of equation \eqref{ll}, we see that \eqref{67} yields $\partial_t m^n \to \partial_t m$ in $L^\infty([0,T],L^2)$, and  in the relation \eqref{eq:dt_m_decomp} 
\[ \partial_t \e^n = (-g^n). \partial_t m^n + \dot y^n (\partial_x w_* + \partial_x \e^n) - \dot \phi^n  e_1 \wedge (w_* +\e^n), \]
all terms of the right hand side have strong or weak-$*$ limits in $L^\infty([0,T],L^2)$, so that $\partial_t \e^n$ has a weak-$*$ limit in in $L^\infty([0,T],L^2)$, as well, which is necessarily $\partial_t \e$ (by uniqueness of limits in the distributional sense). Therefore, we conclude $\partial_t \e \in L^\infty([0,T],L^2)$.

\bigskip

\emph{Step 5. Uniqueness}. Let $\tilde g = (\tilde y, \tilde \phi)$ be another such gauge (associated with error $\tilde \e$) for $m$. By \eqref{def:dec_m_e}, for all $t \in [0,T]$, $\| g(t). w_* - \tilde g(t). w_* \|_{H^1} \le \| \e (t) \|_{H^1} + \| \tilde \e(t) \|_{H^1} \le 2C_2 \delta_2 \le \delta'$, so that Claim \ref{claim:beauty} applies and there exists 
 $n \in \m Z$ (independent of $t$) such that $|g(t) - \tilde g(t) + (0,2\pi n)| \le 2C'C_2 \delta_2$.
By lowering $\delta_2^0$ further, \eqref{est:g_e} applied to $\e(t)$ and $\tilde \e(t)$ yields $(-g(t)). m(t)$ and $(-\tilde g(t) - (0,2\pi n)).m(t)$ fit the hypothesis for $w$ in Lemma \ref{lem:mod}, and in view of the orthogonality condition \eqref{eq:ortho} satisfied by both $\e(t)$ and $\tilde \e(t)$, the uniqueness of the gauge in Lemma \ref{lem:mod} yields for all $t \in [0,T]$, $g(t) = \tilde g(t) + (0,2\pi n)$ and so $\e(t) = \tilde \e(t)$.
\end{proof}

\subsection{Coercivity of the energy}

The second ingredient in the proof of Theorem \ref{th:stab} is a coercivity property of the energy and estimates on the energy dissipation \eqref{eq:en_dissip2}.
For the coercivity property, we expand the energy around $w_*\in \{w_*^\pm\}$. To express it more conveniently, we work in the $(w_*,n_*,p_*)$ basis related to $w_*$ given in \eqref{np}. We can actually give the coordinates explicitly:
\begin{equation} \label{eq:w*_spheric}
w_* =  \begin{pmatrix}
\cos \theta_*\\
\sin \theta_* \cos \varphi_* \\
\sin \theta_* \sin \varphi_*
\end{pmatrix},
\quad n_* =  \begin{pmatrix}
-\sin \theta_*\\
\cos \theta_* \cos \varphi_* \\
\cos \theta_* \sin \varphi_*
\end{pmatrix}, \quad
p_* =  \begin{pmatrix}
0 \\
- \sin \varphi_* \\
\cos \varphi_*
\end{pmatrix},
\end{equation}
with $\theta_*$ and $\varphi_*$ given in \eqref{formul1}. 
Observe that $(w_*,n_*,p_*)$ is a direct orthonormal frame, and we compute their differentials:  
\begin{align*}
d w_* & = n_* d\theta_* + \sin \theta_* p_* d\varphi_*, \\
dn_* & = - w_* d\theta_* + \cos \theta_* p_* d\varphi_*, \\
d p_* & =  - (\sin \theta_* w_* + \cos \theta_* n_*) d\varphi_*.
\end{align*}
Also, by \eqref{formul1}, 
\be
\label{numbe}
\begin{cases}
& e_1 = \cos \theta_* w_* - \sin \theta_* n_*,\\
& e_1 \wedge w_* = \sin \theta_* p_*, \, e_1 \wedge n_* = \cos \theta_* p_*, \, e_1 \wedge p_* = - (\sin \theta_* w_* + \cos \theta_* n_*),\\
& \partial_x w_*  = \partial_x \theta_* n_* + \sin \theta_* \partial_x \varphi_* p_* = \partial_x \theta_* \left( n_* + \frac{\gamma}{\sqrt{1-\gamma^2}} p_* \right), \\
& \partial_x n_* = - \partial_x \theta_* w_* + \cos \theta_* \partial_x \varphi_* p_* = - \partial_x \theta_* w_* - \gamma \cos \theta_* p_*, \\
& \partial_x p_*  = - \partial_x \varphi_* (\sin \theta_* w_* + \cos \theta_* n_*) = \gamma (\sin \theta_* w_* + \cos \theta_* n_*) .
\end{cases}
\ee
We also use the notation (see \eqref{rho*}):
\be
\label{dstar}
\begin{cases}
&\beta_* := \rho_0^*=w_* \cdot \delta E(w_*) =  (\partial_x \theta_*)^2 + (1-\gamma^2)\sin^2 \theta_*    = 2(1-\gamma^2) \sin^2 \theta_* >0,\\ 
&\delta E(w_*) = \beta_* w_*,
\end{cases}
\ee
by Theorem \ref{thm_DM}.

We will need the following operator 
\begin{align} \label{def:Lg}
L_\gamma  = - \partial_{xx}  + (1-\gamma^2)(\cos^2 \theta_*- \sin^2 \theta_*). 
\end{align}
Actually, $L_\gamma$ is a Schr\"odinger operator with classical potential ($V(x)=(1-\gamma^2)(\cos^2 \theta_*- \sin^2 \theta_*)$), and so its spectrum is well known. We summarise below the properties which will be relevant for the subsequent analysis following ideas in \cite[Proposition 2.10]{Wei86} (with $\sigma=1$, $N=1$), we also refer to \cite[Lemma 2.2]{chang2008spectra} and Lemma \ref{lem:coer00} in Appendix \ref{append:c}. 

\begin{lem} \label{lem:Lg} For $\gamma\in (-1,1)$, 
$L_\gamma$ is a self-adjoint operator on $L^2(\m R)$ with dense domain $H^2(\m R)$ and has $0$ as first (simple) eigenvalue with an eigenfunction $\sin(\theta_*)>0$:
\[ L_\gamma (\sin \theta_*) =0, \]
As a consequence, there exists $\lambda_0>0$ (small) such that for all $v \in H^1(\m R,\m R)$,
\begin{align} \label{coercivity} 
 0 \le \ds (L_\gamma v,v) \le \frac{1}{\lambda_0} \| v \|_{H^1}^2\quad \textrm{and} \quad (L_\gamma v, v) \ge 4\lambda_0 \| v \|_{H^1}^2 - \frac{1}{\lambda_0} \left( \int v \sin(\theta_*) dx \right)^2,
\end{align}
and for all $v \in H^2(\m R,\m R)$,
\begin{align} \label{coercivity2} 
\| L_\gamma v \|_{L^2}^2 \ge 4 \lambda_0 \| v \|_{H^2}^2 - \frac{1}{\lambda_0} \left( \int v \sin(\theta_*) dx \right)^2.
\end{align}
\end{lem}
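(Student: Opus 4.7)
My plan is to prove the four assertions in sequence, using the explicit ODE for $\theta_*$ and standard Schrödinger operator theory.

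First, I would directly verify that $L_\gamma \sin\theta_* = 0$. Using \eqref{eq:theta*}, namely $\partial_x \theta_* = -\sqrt{1-\gamma^2}\sin\theta_*$, one computes
\[ \partial_x \sin\theta_* = -\sqrt{1-\gamma^2}\sin\theta_* \cos\theta_*, \qquad \partial_{xx}\sin\theta_* = (1-\gamma^2)\sin\theta_*(\cos^2\theta_*-\sin^2\theta_*), \]
which immediately cancels the potential term in $L_\gamma \sin\theta_*$. Since the potential $V(x)=(1-\gamma^2)(\cos^2\theta_*-\sin^2\theta_*)$ is smooth, bounded (by $1-\gamma^2$ in absolute value), and satisfies $V(x)\to 1-\gamma^2>0$ exponentially as $|x|\to\infty$, $L_\gamma$ is a bounded relatively compact perturbation of $-\partial_{xx}+(1-\gamma^2)$, hence self-adjoint on $H^2(\m R)$ and (by Weyl's theorem) $\sigma_{\mathrm{ess}}(L_\gamma)=[1-\gamma^2,+\infty)$. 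Because $\sin\theta_*>0$ everywhere and decays exponentially (so belongs to $L^2$), the classical Sturm–Liouville/nodal domain argument forces it to be the ground state: $0=\min\sigma(L_\gamma)$, the eigenvalue is simple, and setting $\lambda_1:=\inf(\sigma(L_\gamma)\setminus\{0\})$ we get $\lambda_1>0$ (any extra eigenvalues in $(0,1-\gamma^2)$ are isolated of finite multiplicity).

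For the upper bound in \eqref{coercivity}, I will use $|V|\le 1-\gamma^2\le 1$ and integration by parts to get $(L_\gamma v,v)=\|\partial_x v\|_{L^2}^2+\int V v^2\,dx\le \|v\|_{H^1}^2$; the nonnegativity follows from $0=\min\sigma(L_\gamma)$. For the sharp lower bound, set $\psi_0 = \sin\theta_*/\|\sin\theta_*\|_{L^2}$ and write $v = c\psi_0 + v_\perp$ with $c=\int v\psi_0\,dx$ and $v_\perp\perp_{L^2}\psi_0$. Then $L_\gamma v=L_\gamma v_\perp$ and by the spectral gap
\[ (L_\gamma v,v) = (L_\gamma v_\perp, v_\perp)\ge \lambda_1\|v_\perp\|_{L^2}^2. \]
Combining this with the straightforward bound $(L_\gamma v,v)\ge \|\partial_x v\|_{L^2}^2-(1-\gamma^2)\|v\|_{L^2}^2$ via a convex combination, together with the Pythagoras identity $\|v\|_{L^2}^2=c^2+\|v_\perp\|_{L^2}^2$, yields a constant $\mu>0$ depending only on $\gamma$ and $\lambda_1$ such that
\[ (L_\gamma v,v)\ge \mu\,\|v\|_{H^1}^2 - C\,c^2 = \mu\,\|v\|_{H^1}^2 - \frac{C}{\|\sin\theta_*\|_{L^2}^2}\Bigl(\int v\sin\theta_*\,dx\Bigr)^2, \]
and choosing $\lambda_0$ small enough gives \eqref{coercivity}.

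Finally, for \eqref{coercivity2} I will upgrade this to $H^2$ using the equation itself. From $(L_\gamma v_\perp,v_\perp)\ge\lambda_1\|v_\perp\|_{L^2}^2$ and Cauchy–Schwarz, $\|v_\perp\|_{L^2}\le \lambda_1^{-1}\|L_\gamma v_\perp\|_{L^2}$; feeding this back into the already proved $H^1$ coercivity bound gives $\|v_\perp\|_{H^1}\lesssim \|L_\gamma v_\perp\|_{L^2}$. Then the identity $\partial_{xx} v_\perp = V v_\perp - L_\gamma v_\perp$ and the $L^\infty$ bound on $V$ yield $\|\partial_{xx} v_\perp\|_{L^2}\lesssim \|L_\gamma v_\perp\|_{L^2}$, hence $\|v_\perp\|_{H^2}\lesssim \|L_\gamma v\|_{L^2}$. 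Recomposing $v = c\psi_0+v_\perp$ and using that $\psi_0$ is a fixed smooth Schwartz function (so $\|\psi_0\|_{H^2}$ is a $\gamma$-dependent constant) gives $\|v\|_{H^2}^2\le C(c^2+\|L_\gamma v\|_{L^2}^2)$, which is exactly \eqref{coercivity2} after adjusting $\lambda_0$.

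The only potentially delicate step is justifying the spectral gap $\lambda_1>0$, i.e., that $0$ is isolated in $\sigma(L_\gamma)$ and simple. Both follow from the standard two facts that (i) the essential spectrum starts at $1-\gamma^2>0$ so any accumulation of eigenvalues at $0$ is impossible, and (ii) the strict positivity of $\sin\theta_*$ on $\m R$ forces it to be the Perron–Frobenius ground state of the Schrödinger operator, so $0$ is nondegenerate; the paper already cites \cite[Prop.~2.10]{Wei86} and \cite[Lem.~2.2]{chang2008spectra} for precisely this kind of statement, and Lemma \ref{lem:coer00} in Appendix \ref{append:c} presumably provides the abstract framework I would invoke.
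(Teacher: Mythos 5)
Your proposal is correct, and the overall architecture (verify $L_\gamma\sin\theta_*=0$ directly; identify $\sin\theta_*$ as the simple ground state; decompose $v=c\psi_0+v_\perp$ and use a spectral gap plus a convex combination with the trivial bound $(L_\gamma v,v)\ge\|\partial_x v\|_{L^2}^2-(1-\gamma^2)\|v\|_{L^2}^2$; then bootstrap to $H^2$) matches the paper's approach. The main stylistic divergence is in how the spectral facts are justified. You invoke standard machinery: Weyl's theorem for $\sigma_{\mathrm{ess}}(L_\gamma)=[1-\gamma^2,+\infty)$, discreteness of eigenvalues below the essential spectrum to get $\lambda_1=\inf(\sigma(L_\gamma)\setminus\{0\})>0$, and Perron--Frobenius/nodal theory for simplicity and positivity of the ground state. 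The paper instead proves these from scratch in Appendix~\ref{append:c}, Lemma~\ref{lem:coer00}: nonnegativity and simplicity of the ground state come from the ground-state (Jacobi) transform $w=v/\phi$ with $(Lv,v)=\int\phi^2|\nabla w|^2$, together with Fatou's lemma to pass from $\q C_c^\infty$ to $H^1$; the spectral gap is obtained by a concentration-compactness argument on a minimizing sequence using only that $V\ge c>0$ outside a compact set. Your route is shorter but relies on black boxes; the paper's Lemma~\ref{lem:coer00} is self-contained, more elementary, and is stated in a form valid in $\R^N$ (it is also reused elsewhere). Your $H^2$ upgrade (chain $\|v_\perp\|_{L^2}\lesssim\|L_\gamma v_\perp\|_{L^2}$, then $H^1$, then recover $\partial_{xx}v_\perp$ from the equation) is slightly different from the paper's direct algebraic expansion of $\|Lv\|^2$ with a tuned convex-combination constant, but both give the same conclusion with the same dependence structure. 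No gaps; the proof is sound.
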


\begin{proof}
We provide a short explanation of the results. By direct computations, 
\[ \partial_{xx} \theta_* = - \sqrt{1-\gamma^2} \cos \theta_* \partial_x \theta_* = (1-\gamma^2) \sin (2 \theta_*)/2, \]
so that
\[ \partial_{xx} (\partial_x \theta_*) = (1-\gamma^2) \cos(2 \theta_*) \partial_x \theta_* = (1-\gamma^2)(\cos^2 \theta_* - \sin^2 \theta_*) \partial_x \theta_* \]
Hence $\partial_x \theta_* \in \ker L_\gamma$ and so does $\sin \theta_*$ which is collinear to $\partial_x \theta_*$ by \eqref{formul1}. 
Now, note that $\sin \theta_*>0$, thus, Sturm-Liouville theory ensures that $L_\gamma$ is a nonnegative operator and the first eigenvalue, which is $0$, is simple, ie,  $\ker L_\gamma = \R \sin \theta_*$, see details in Lemma \ref{lem:coer00} in Appendix \ref{append:c} where also \eqref{coercivity} and \eqref{coercivity2} are proved.
\end{proof}

\begin{nb}
Notice that $L_\gamma$ has continuum spectrum $[1-\gamma^2,+\infty)$: indeed, the resolvent of $L_\gamma = -\partial_{xx} + (1-\gamma^2)(1-2\sin^2 \theta_*)$ is a compact perturbation of that of $-\partial_{xx} + (1-\gamma^2)$, so they share the same continuous spectrum. We refer for further details to \cite[Chapters 13 and 14]{HS96}, for example.
\end{nb}

The heart of the stability result is the following statement, which gives (at leading order) the relative size of the terms appearing in the energy dissipation identity \eqref{eq:en_dissip2}. It is an expansion at the stationary level.

\begin{prop}[Expansion of the energy] \label{prop:en_expansion} Let $w_*\in \{w_*^\pm\}$. 
There exist $\delta_3>0$ and $C_3 >0$ such that the following holds. Let $m:=w_* + \eta: \m R \to \m S^2$ be such that 
\[ \| \eta \|_{\q H^1} \le \delta_3. \]
We decompose $\eta$ in the $(w_*,n_*,p_*)$ basis (pointwise in $x$):
\[  \eta = \mu w_* + \nu n_* + \rho p_* \quad \text{where} \quad \mu: = \eta \cdot w_*, \quad \nu: = \eta \cdot n_*, \quad \rho : = \eta \cdot p_*. \]
Then $\mu, \nu, \rho \in H^1$ with
\[ \| \mu \|_{H^1} \le C_3 \| (\nu,\rho) \|_{H^1}^2, \quad   \frac{1}{C_3} \| \eta \|_{H^1} \le \| (\nu,\rho) \|_{H^1} \le C_3 \| \eta \|_{H^1}, \]
\begin{align} \label{E_coer}
&\left| E(m) - \big[ E(w_*) + \frac{1}{2} \left( (L_\gamma \nu,\nu) + (L_\gamma \rho,\rho) \right) \big] \right| \le C_3 \| (\rho,\nu) \|_{H^1}^3,\\
&\left| \int  (m \wedge e_1) \cdot (m \wedge \delta E(m) dx \right|  \le C_3  \| (\nu,\rho) \|_{H^1}^2. \label{E_Efield}
\end{align}
with $L_\gamma$ defined in \eqref{def:Lg}. 

If furthermore $\eta \in \q H^2$ (not necessarily small in $\q H^2$), then $\eta, \mu, \nu, \rho \in H^2$, 
\begin{align}
\nonumber
\| (\nu,\rho) \|_{H^2}&  \le C_3 \| \eta \|_{H^2}, \\
\left| \int \left( |\delta E(m)|^2- |m \cdot \delta E(m)|^2 \right) dx - \left[ \| L_\gamma \nu \|_{L^2}^2 + \| L_\gamma \rho \|_{L^2}^2 \right] \right| & \le C_3 \| (\nu,\rho) \|_{H^2}^2  \| (\nu,\rho) \|_{H^1},  \label{E_diss}
\end{align}
\end{prop}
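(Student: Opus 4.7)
The first block of statements (the bounds on $\mu$ in terms of $(\nu,\rho)$ and the equivalence of norms) will follow from the pointwise constraint $|m|^2=1$, which in the frame $(w_*,n_*,p_*)$ reads $2\mu + \mu^2+\nu^2+\rho^2=0$, i.e.\ $\mu = -\tfrac{1}{2}(\mu^2+\nu^2+\rho^2)$. For $\delta_3$ small enough, since $H^1(\m R)$ is a Banach algebra, a contraction/implicit argument applied to this fixed-point relation yields $\|\mu\|_{H^1}\le C\|(\nu,\rho)\|_{H^1}^2$; the same relation iterated in $H^2$ (again using the Banach-algebra structure of $H^1$ and $L^\infty$-control via the Sobolev embedding) gives $\|\mu\|_{H^2}\le C\|(\nu,\rho)\|_{H^2}^2$ provided $\eta\in H^2$. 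Combining with the orthonormality of $(w_*,n_*,p_*)$, this produces the claimed $\|\eta\|_{H^s}\sim \|(\nu,\rho)\|_{H^s}$ for $s\in\{1,2\}$.

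For the energy expansion \eqref{E_coer}, I would exploit that $E$ is (up to an additive constant) a quadratic form on $m$ regarded as a map into $\m R^3$. Hence Taylor's formula is \emph{exact}:
\[
E(w_*+\eta) = E(w_*) + \int \delta E(w_*)\cdot\eta\, dx + E(\eta).
\]
By Theorem~\ref{thm_DM} and \eqref{dstar}, $\delta E(w_*)=\beta_* w_*$ with $\beta_*=2(1-\gamma^2)\sin^2\theta_*$, so the linear term equals $\int \beta_*\mu\, dx = -\tfrac{1}{2}\int \beta_*(\nu^2+\rho^2)\, dx + O(\|\mu\|_{L^2}^2)$, i.e.\ $-(1-\gamma^2)\int\sin^2\theta_*(\nu^2+\rho^2)dx$ up to an $O(\|(\nu,\rho)\|_{H^1}^4)$ error. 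The quadratic term $E(\eta)$ I will compute directly by plugging $\eta=\mu w_*+\nu n_*+\rho p_*$ into $|\partial_x\eta|^2+2\gamma\partial_x\eta\cdot(e_1\wedge\eta)+(\eta_2^2+\eta_3^2)$ and using the formulas \eqref{numbe} for the derivatives of the frame. The key (slightly miraculous) computation is that after carrying out this substitution and setting $\mu=0$ (the $\mu$-dependent contributions are absorbed in $O(\|(\nu,\rho)\|_{H^1}^3)$ via Claim~\ref{cl:O_IPP}), all DMI cross terms (between $\nu$ and $\rho$, and between $\partial_x\nu$, $\rho$ etc.) cancel pairwise, and one obtains
\[
E(\eta)\big|_{\mu=0} = \tfrac{1}{2}\int (\partial_x\nu)^2+(\partial_x\rho)^2 + (1-\gamma^2)(\nu^2+\rho^2)\, dx.
\]
Adding the linear contribution $-(1-\gamma^2)\int\sin^2\theta_*(\nu^2+\rho^2)dx$ precisely reconstitutes $\tfrac12[(L_\gamma\nu,\nu)+(L_\gamma\rho,\rho)]$ with $L_\gamma$ given by \eqref{def:Lg}.

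For \eqref{E_Efield}, I would use the algebraic identity $(m\wedge e_1)\cdot(m\wedge \delta E(m)) = e_1\cdot\delta E(m) - m_1\, m\cdot\delta E(m)$, valid because $|m|=1$. Note that $m\cdot\delta E(m)$ is the pointwise energy density, which is in $L^1$. The key is then to check that the functional $I(m):=\int(m\wedge e_1)\cdot(m\wedge\delta E(m))dx$ satisfies $I(w_*)=0$ (this follows from $m\cdot\delta E(w_*)=\beta_*$ and $\int\cos\theta_*\sin^2\theta_* dx=0$ by computing a total derivative) and $dI|_{w_*}(\eta)=0$ on the tangent space $\{\eta:\eta\cdot w_*=0\}$. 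The latter requires expanding $\delta E(\eta)$ in the frame $(w_*,n_*,p_*)$, integrating by parts once (using $\partial_x(\cos\theta_*\sin\theta_*)=(\cos^2\theta_*-\sin^2\theta_*)\partial_x\theta_*$), and observing that the two boundary-free contributions cancel. From this first-order vanishing, smoothness of $I$ and the equivalence $\|\eta\|_{H^1}\sim\|(\nu,\rho)\|_{H^1}$ give $|I(m)|\le C\|(\nu,\rho)\|_{H^1}^2$ as required.

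The hard part will be \eqref{E_diss}. Using $|\delta E(m)|^2-|m\cdot\delta E(m)|^2=|m\wedge\delta E(m)|^2$, I decompose
\[
m\wedge\delta E(m) = \bigl[w_*\wedge\delta E(\eta) + \beta_*\eta\wedge w_*\bigr] + \eta\wedge\delta E(\eta).
\]
The hoped-for leading contribution is the square of the bracketed linear operator applied to $\eta$. With $\eta=\nu n_*+\rho p_*$ (i.e.\ $\mu=0$), a careful but direct expansion in the frame (again using \eqref{numbe} and the identities $w_*\wedge n_*=p_*$, $w_*\wedge p_*=-n_*$) gives
\[
w_*\wedge\delta E(\eta)+\beta_*\eta\wedge w_*\big|_{\mu=0} = -(L_\gamma\rho)\, n_* + (L_\gamma\nu)\, p_*,
\]
the miraculous cancellations being of the same nature as those in the energy expansion: all DMI and $\cos\theta_*/\sin\theta_*$ cross terms collapse, so that each component reduces to the Schr\"odinger operator \eqref{def:Lg}. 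Squaring and integrating yields the main term $\|L_\gamma\nu\|_{L^2}^2+\|L_\gamma\rho\|_{L^2}^2$. The error comes from three sources: the $\mu$-terms in the linearisation (which are of schematic type $O^\ell_2(\nu,\rho)$ with $\ell\ge 2$, since $\mu=O^2_0(\nu,\rho)+\cdots$), the cross product with the quadratic remainder $\eta\wedge\delta E(\eta)$ (schematically $O^\ell_2(\eta)$ with $\ell\ge 3$), and the square of that remainder (schematically $O^\ell_2(\eta)$ with $\ell\ge 4$). After using \eqref{est:O_H2} and Claim~\ref{cl:O_IPP}, each of these integrates to something bounded by $\|(\nu,\rho)\|_{H^1}\|(\nu,\rho)\|_{H^2}^2$, giving the error in \eqref{E_diss}. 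The main obstacle is precisely this bookkeeping: one must organise the (numerous) second-order-in-$\eta$ terms arising from the expansion of $|m\wedge\delta E(m)|^2$ so that every contribution not already absorbed in $L_\gamma\nu$ or $L_\gamma\rho$ carries at least one factor of smallness in $H^1$ (using $\|\mu\|_{H^1}\lesssim\|(\nu,\rho)\|_{H^1}^2$ and $\|\eta\|_{H^1}\lesssim\delta_3$), while the two $H^2$-derivatives stay on a single factor that is controlled in $L^2$.
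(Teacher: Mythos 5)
Your plan follows essentially the same route as the paper's own proof: exploit the constraint $|w_*+\eta|=1$ to show $\mu$ is quadratic in $(\nu,\rho)$, expand the exact quadratic Taylor formula for $E$ in the moving frame $(w_*,n_*,p_*)$, observe that the DMI cross-terms cancel so the Hessian collapses to $L_\gamma$ acting on $\nu$ and $\rho$ separately, and carry out the same $O^\ell_k$-style bookkeeping for the dissipation and forcing terms. I checked your explicit computation of $E(\eta)\big|_{\mu=0}$ and of $w_*\wedge\delta E(\eta)+\beta_*\eta\wedge w_*\big|_{\mu=0}=-(L_\gamma\rho)\,n_*+(L_\gamma\nu)\,p_*$: both are correct. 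For the dissipation term, writing $|\delta E(m)|^2-|m\cdot\delta E(m)|^2=|m\wedge\delta E(m)|^2$ and squaring a single expansion is a slightly cleaner organization than the paper's (which expands $|\delta E(m)|^2$ and $|m\cdot\delta E(m)|^2$ separately and watches for cancellations), though the underlying algebra and error bookkeeping are the same; similarly, your use of the identity $(m\wedge e_1)\cdot(m\wedge\delta E(m))=e_1\cdot\delta E(m)-m_1\,(m\cdot\delta E(m))$ is a valid alternative to the paper's direct wedge expansion, and the first-order vanishing you invoke is precisely $\int\sin\theta_*\,L_\gamma\nu\,dx=0$, which is what the paper uses via $L_\gamma\sin\theta_*=0$ and self-adjointness.

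One imprecision to fix: you state $\|\mu\|_{H^2}\le C\|(\nu,\rho)\|_{H^2}^2$, but this alone does not let you absorb $\mu$ and conclude $\|\eta\|_{H^2}\lesssim\|(\nu,\rho)\|_{H^2}$, since the hypotheses allow $\|(\nu,\rho)\|_{H^2}$ to be large. The bound you actually obtain from $\partial_x^k\mu=O^2_k(\eta)$ and Claim~\ref{cl:Okl_2} is the interpolated estimate $\|\mu\|_{H^2}\lesssim\|\eta\|_{H^1}\|\eta\|_{H^2}$, and it is the smallness of $\|\eta\|_{H^1}\le\delta_3$ (not of the $H^2$ norm) that allows the absorption and yields the two-sided equivalence in $H^2$. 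This is what your error estimates for \eqref{E_diss} also rely on, so the sharper form should be what you record.
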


\begin{proof}
We will use the notation $g = O^\ell_k(f)$ introduced in Definition \ref{def:O}.  In view of the expansion of the energy, we will expand to cubic order in $\eta$ (and so the computations are substantially more involved than in Proposition \ref{lem:decomp}). Recall that  $w_*$, $n_*$, $p_*$ are smooth functions of $\theta_*$ and $\f_*$ and together with their derivatives at any order, they belong to $W^{2, \infty}(\R)$, so they are admissible as coefficient functions in the polynomials $O^\ell_k$. This allows to differentiate the symbol  $O^\ell_k$ as in Claim \ref{cl:Okl_1} (iv).

By Lemma \ref{lem:H_H1_2} in Appendix \ref{append}, we denote $\delta_*$ and $C_*$ the constants given there for $w_*$ (which satisfies the nondegeneracy condition). Taking $ \delta_3 \in (0, \delta_*)$ small enough, we can assume that $\| \eta \|_{H^1} \le C_* \delta_3$ is as small as needed. Moreover, by a density argument, we can assume that $\eta$ is smooth. \label{pag} Indeed, by convolution, there is a sequence of smooth maps 
$\tilde \eta^n:\R\to \R^3$ such that $\tilde \eta^n\to \eta$ in $H^1$. By the embedding $H^1\subset L^\infty$, we deduce that $|w_*+\tilde \eta^n|\to |w_*+\eta|=1$ uniformly in $\R$. Therefore, for $n$ large, the smooth maps $\eta_n= \frac{w_*+\tilde \eta^n}{|w_*+\tilde \eta^n|}-w_*$ converge to $\eta$ in $H^1$ and $|w_*+\eta^n|=1$ in $\R$. Throughout the proof below, we will assume that $\eta$ is smooth, and the general case will follow using the above density argument.

\bigskip

\emph{Step 1: $\mu$ is quadratic in $(\nu,\rho)$.} The relations $\mu = \eta \cdot w_*$, $\nu = \eta \cdot n_*$, $\rho = \eta \cdot p_*$ yield that
\[ \mu, \nu, \rho = O^1_0(\eta). \]
Differentiating repetitively (and using Claim \ref{cl:Okl_1} (iv)), we get that for any $k \in \m N$,
\[ \partial_x^k \mu, \partial_x^k \nu, \partial_x^k \rho = O_k^1(\eta). \]
Using Claim \ref{cl:Okl_2}, if $\eta \in H^k$ for some $k \ge 1$, then
\begin{align}  \label{est:coord_eta_Hk}
\| \mu \|_{H^k} +  \| \nu \|_{H^k} +  \| \rho \|_{H^k} \lesssim \| \eta \|_{H^k}.
\end{align}
In particular, for $k=1$,
\[ \| \mu \|_{H^1} +  \| \nu \|_{H^1} +  \| \rho \|_{H^1} \lesssim \| \eta \|_{H^1}, \]
so that these quantities are small if $\| \eta \|_{H^1} $ is small. On the other side, as $\eta = \mu w_* + \nu n_* + \rho p_* = O^1_0(\mu,\nu,\rho)$, we have similarly that for all $k \in \m N$,
\[ \partial_x^k \eta = O_k^1(\mu,\nu,\rho). \]
As a consequence, by Claim \ref{cl:Okl_2}, there holds for $k\ge 1$:
\begin{align} \label{est:eta_coord_Hk}
\| \eta \|_{H^k} \lesssim \| \mu \|_{H^k} +  \| \nu \|_{H^k} +  \| \rho \|_{H^k}.
\end{align}
Let us now observe that  $\mu$ is quadratic in $\eta$, implying  that all terms containing $\mu$ are of order at least $2$ in $\eta$. Indeed, the constraint $|w_* + \eta|^2 =1$ writes
\[ \mu = - |\eta|^2/2 =  O_0^2(\eta). \]
Thus, 
\begin{align}
\label{eq:mu_eta4}
\mu = - \frac{1}{2} (\mu^2 + \nu^2 + \rho^2) = - \frac{1}{2} (\nu^2 + \rho^2) + O^4_0(\eta).
\end{align}
Hence, using again Claim \ref{cl:Okl_1} (iv), for all $k \in \m N$,
\[ \partial_x^k \mu = O_k^2(\eta), \]
and so, Claim \ref{cl:Okl_2} (with $k=1$ or $2$) yields 
\[ \| \mu \|_{H^1} \lesssim \| \eta \|_{H^1}^2, \quad \| \mu \|_{H^2} \lesssim \| \eta \|_{H^1} \| \eta \|_{H^2}. \]
As $\| \eta \|_{H^1}$ is small, we infer from \eqref{est:coord_eta_Hk} and \eqref{est:eta_coord_Hk} that for $k=1$ or $2$,
\begin{align}
\label{est:nu_rho_eta}
\| (\nu , \rho) \|_{H^k} \lesssim \| \eta \|_{H^k} \lesssim \| (\nu, \rho) \|_{H^k}.
\end{align}
In particular, we  also have
\[ \| \mu \|_{H^1} \lesssim \| (\nu,\rho) \|_{H^1}^2, \quad \| \mu \|_{H^2} \lesssim \| (\nu,\rho) \|_{H^1} \| (\nu,\rho) \|_{H^2}. \]

\bigskip

\emph{Step 2. Expansion of $\delta E(\eta)$ and related linear terms in $\eta$}. This is a preliminary step where we develop $\delta E(\eta)$ and other linear terms which appear in the integrals when expanding $E(m)$. 
By Claim \ref{cl:Okl_1} combined with \eqref{formul1} and \eqref{numbe}, we compute:
\begin{align*}
\partial_x \eta & = (\partial_x \mu - \partial_x \theta_* \nu + \gamma \sin \theta_* \rho) w_*  + (\partial_x \theta_* \mu + \partial_x \nu + \gamma \cos \theta_* \rho) n_* \\
& \quad + \left( \frac{\gamma}{\sqrt{1-\gamma^2}} \partial_x \theta_* \mu - \gamma \cos \theta_* \nu + \partial_x \rho \right) p_* \\
& = O_1^2(\eta)  
+ ( - \partial_x \theta_* \nu + \gamma \sin \theta_* \rho) w_* + ( \partial_x \nu + \gamma \cos \theta_* \rho) n_* + \left( - \gamma \cos \theta_* \nu + \partial_x \rho \right) p_*, \\
e_1 \wedge \eta & = ( \sin \theta_* \mu + \cos \theta_* \nu ) p_* - \sin \theta_* \rho w_* - \cos \theta_* \rho n_* \\
& =  O_0^2(\eta) - \sin \theta_* \rho w_* - \cos \theta_* \rho n_*+ \cos \theta_* \nu p_*, \\
\eta_1 & = \cos \theta_* \mu - \sin \theta_* \nu = O_0^2(\eta) - \sin \theta_* \nu, \\
\eta_1 e_1 & = O_0^2(\eta) - \sin \theta_*  \cos \theta_* \nu w_* + \sin^2 \theta_* \nu n_*, \\
\eta-\eta_1 e_1& = O_0^2(\eta)+\sin \theta_*  \cos \theta_* \nu w_*+ \cos^2 \theta_* \nu n_*+\rho p_*,\\
e_1 \wedge \partial_x \eta & =  O_1^2(\eta) + ( - \partial_x \theta_* \nu + \gamma \sin \theta_* \rho) \sin \theta_* p_* + ( \partial_x \nu + \gamma \cos \theta_* \rho) \cos \theta_* p_* \\
& \quad - \left( - \gamma \cos \theta_* \nu + \partial_x \rho \right) (\sin \theta_* w_* + \cos \theta_* n_*) \\
& =  O_1^2(\eta) + ( \gamma \cos \theta_* \sin \theta_* \nu - \sin \theta_* \partial_x \rho) w_* \\
& \quad + ( \gamma \cos^2 \theta_* \nu - \cos \theta_* \partial_x \rho) n_* + ( \partial_x ( \cos \theta_* \nu) + \gamma \rho ) p_*, \\
\partial_{xx} \eta & = O_2^2(\eta) + ( - \partial_{xx} \theta_* \nu - \partial_x \theta_* \partial_x \nu + \gamma \cos \theta_* \partial_x \theta_* \rho + \gamma \sin \theta_* \partial_x \rho) w_* \\
& \quad + \partial_x \theta_* ( - \partial_x \theta_* \nu + \gamma \sin \theta_* \rho) \left( n_* + \frac{\gamma}{\sqrt{1-\gamma^2}} p_* \right) \\
&  \quad +  ( \partial_{xx} \nu - \gamma \sin \theta_* \partial_x \theta_* \rho + \gamma \cos \theta_* \partial_x \rho) n_* \\
& \quad -  ( \partial_x \nu + \gamma \cos \theta_* \rho) (\partial_x \theta_* w_* + \gamma \cos \theta_* p_*)  \\
& \quad + ( \gamma \sin \theta_* \partial_x \theta_* \nu - \gamma \cos \theta_* \partial_x \nu + \partial_{xx} \rho ) p_* \\
& \quad + \gamma \left( - \gamma \cos \theta_* \nu + \partial_x \rho \right)  (\sin \theta_* w_* + \cos \theta_* n_*) \\
& = O_2^2(\eta) + w_* \left( - \partial_{xx} \theta_* \nu - 2\partial_x \theta_* \partial_x \nu - \gamma^2 \sin \theta_* \cos \theta_* \nu + 2 \gamma \sin \theta_* \partial_x \rho\right) \\ 
& \quad  +  \big(\partial_{xx} \nu -  [(\partial_x \theta_*)^2 + \gamma^2 \cos^2 \theta_* ] \nu + 2 \gamma \cos \theta_* \partial_x \rho \big) n_* \\
& \quad  +  \big(-  2 \gamma \partial_x(\cos \theta_* \nu) - \gamma^2 \rho + \partial_{xx} \rho \big) p_*.
\end{align*}
We obviously have
\begin{equation} \label{eq:dE(e)_1}
 \delta E(\eta) = -  \partial_{xx} \eta - 2 \gamma e_1 \wedge \partial_x \eta + \eta-\eta_1 e_1 = O_2^1(\eta). \end{equation}
Using \eqref{crit_theta} and \eqref{formul1}, we have an expansion at order 2 for this quantity
\begin{align}
\label{TT} \delta E(\eta) & = O_2^2(\eta) + 2 (\partial_{xx} \theta_* \nu + \partial_x \theta_* \partial_x \nu) w_* +  \left(- \partial_{xx} \nu+ (1 - \gamma^2) \nu \right)  n_* + (- \partial_{xx} \rho+ (1 - \gamma^2) \rho) p_*.
\end{align}
Hence, we have the following expansions for scalar and wedge products linear in $\eta$:
\begin{align*}
w_* \cdot \eta & = \mu=  - \frac{1}{2} ( \nu^2 + \rho^2) + O_0^4(\eta),\\
w_* \cdot \delta E(\eta) & = 2 \partial_{xx} \theta_* \nu + 2 \partial_x \theta_* \partial_x \nu + O_2^2(\eta)=O_1^1(\eta)+O_2^2(\eta), \\
 w_* \wedge \delta E(\eta) &  = O_2^2(\eta) - (- \partial_{xx} \rho+ (1 - \gamma^2) \rho) n_*+ \left(- \partial_{xx} \nu+ (1 - \gamma^2) \nu \right)  p_*, \\
\eta \cdot \delta E(w_*) & = \beta_* \eta \cdot w_* = - \frac{1}{2} \beta_* |\eta|^2,
\end{align*}
where $\beta_*$ is given in \eqref{dstar}.

\bigskip

\emph{Step 3. Expansion of quadratic terms in $\eta$}. Using Claim \ref{cl:Okl_1} together with \eqref{eq:mu_eta4} and \eqref{TT}, we derive the following expansions for the quadratic terms in $\eta$:
\begin{align*}
|\eta|^2 & = O_0^4(\eta) + \nu^2 + \rho^2, \quad  \\
(w_* \cdot \delta E(\eta))^2 & =  O^3_3(\eta) + O_4^4(\eta) + \left( 2 \partial_{xx} \theta_* \nu + 2 \partial_x \theta_* \partial_x \nu \right)^2
\end{align*}
Using \eqref{eq:dE(e)_1}, we have
\[ \eta \cdot \delta E(\eta) = O_2^2(\eta). \]
With the help of \eqref{TT}, we also have
\begin{align*}
\eta \cdot \delta E(\eta) & = O_2^3(\eta) - \nu \partial_{xx} \nu + (1 - \gamma^2) \nu^2 - \rho \partial_{xx} \rho + (1 - \gamma^2) \rho^2, \\
|\delta E(\eta)|^2 & = O_4^3(\eta) + O_4^4(\eta) + \left(2 \partial_{xx} \theta_* \nu + 2\partial_{x} \theta_* \partial_x \nu \right)^2 + \left(- \partial_{xx} \nu + (1 - \gamma^2) \nu \right)^2 \\& + (- \partial_{xx} \rho + (1 - \gamma^2)  \rho)^2.
\end{align*}

\bigskip

\emph{Step 4. Expansion of $E(m)$. Proof of \eqref{E_coer}.} The energy $E$ consists only in quadratic terms: therefore, $E(\eta)=\frac12 \int \eta \delta E(\eta) dx$ and we compute using Claim \ref{cl:O_IPP}:
\begin{align*}
\MoveEqLeft  E(w_* + \eta) - E(w_*) =\frac{d}{ds}\big|_{s=0} E(w_* + s\eta)+\frac12 \frac{d^2}{ds^2}\big|_{s=0} E(w_* + s\eta)\\
  &= \int \eta \cdot \delta E(w_*) dx + \frac{1}{2} \int \eta \cdot \delta E(\eta) dx \\
& = O(\| \eta \|_{H^1}^3) - \frac{1}{2} \int \beta_* |\eta|^2 dx + \frac{1}{2} \int \left\{ (-\partial_{xx} \nu + (1 - \gamma^2)  \nu) \nu + (-\partial_{xx} \rho +  (1 - \gamma^2) \rho) \rho \right\} dx \\
& = O(\| \eta \|_{H^1}^3) + \frac{1}{2} \int \left\{  (\partial_x \nu)^2 + (1-\gamma^2 - \beta_*) \nu^2 +  (\partial_x \rho)^2 + (1-\gamma^2 - \beta_*)\rho^2 \right\} dx,
\end{align*}
where we integrated by parts and used $\nu(\pm \infty)=\rho(\pm \infty)=0$ (as $\nu, \rho \in H^1$) and 
\[ \int \beta_* |\mu|^2 dx=\int O_0^4(\eta)\, dx=O(\| \eta \|_{H^1}^4) = O(\| \eta \|_{H^1}^3), \]
as $\| \eta \|_{H^1}$ is small. Now
\begin{align}
\label{eq:gamma_beta}
1- (\gamma^2 + \beta_*) = 1- \gamma^2 -2(1-\gamma^2) \sin^2 \theta_* = (1-\gamma^2) (\cos^2 \theta_* - \sin^2 \theta_*),
\end{align}
and finally, by \eqref{def:Lg},
\[ E(w_* + \eta) - E(w_*) = \frac{1}{2} ( (L_\gamma\nu, \nu) + (L_\gamma \rho, \rho)) + O(\| \eta \|_{H^1}^3), \]
yielding \eqref{E_coer}.

\bigskip

\emph{Step 5. Expansion of the dissipation term. Proof of \eqref{E_diss}.}
As $m=w_*+\eta$,
\begin{align*}
\int |\delta E(m)|^2 dx - \int |m \cdot \delta E(m)|^2 dx & = \int \left(|\delta E(w_* + \eta)|^2 -  |\delta E(w_*)|^2 \right) dx \\
& \qquad - \int \left( |(w_*+\eta) \cdot \delta E(w_*+\eta)|^2 -  |w_* \cdot \delta E(w_*)|^2 \right)   dx
\end{align*}
because $|\delta E( w_*)| = \beta_* = |w_* \cdot \delta E(w_*)|$ by \eqref{dstar}. As $\delta E(v)$ is linear in $v$, the integrand in the first integral is
\begin{align} |\delta E(w_* + \eta)|^2 -  |\delta E(w_*)|^2  = 2 \delta E(w_*) \cdot \delta E(\eta) + |\delta E(\eta)|^2
\label{expansion_diss_1} = 2 \beta_* w_* \cdot \delta E(\eta) + |\delta E(\eta)|^2.
\end{align}
The integrand in the second integral is of fourth order in $\eta$: we expand it up to order 2. First,
\[ (w_*+\eta) \cdot \delta E(w_*+\eta) = w_* \cdot \delta E(w_*) + \eta \cdot \delta E(w_*) + w_* \cdot \delta E(\eta) + \eta \cdot \delta E(\eta). \]
Therefore, as $\eta \cdot w_* = \mu = O^2_0(\eta)$, by \eqref{dstar} and Step 3, 
\begin{align}
\nonumber \MoveEqLeft |(w_*+\eta) \cdot \delta E(w_*+\eta)|^2 - |w_* \cdot \delta E(w_*)|^2 \\
\nonumber & =  2 \left( \eta \cdot \delta E(w_*) + w_* \cdot \delta E(\eta) + \eta \cdot \delta E(\eta) \right) (w_* \cdot \delta E(w_*)) \\
\nonumber & \qquad + \left( \eta \cdot \delta E(w_*) + w_* \cdot \delta E(\eta) + \eta \cdot \delta E(\eta) \right)^2 \\
\nonumber & = 2 \beta_* \left( \beta_* \mu + w_* \cdot \delta E(\eta) +  \eta \cdot \delta E(\eta) \right) + \left( \beta_* \mu + w_* \cdot \delta E(\eta) + O^2_2(\eta) \right)^2 
\\
\label{expansion_diss_2} & = 2 \beta_*^2 \mu + 2 \beta_* w_* \cdot \delta E(\eta) + 2 \beta_*  \eta \cdot \delta E(\eta) +  |w_* \cdot \delta E(\eta)|^2 +  O^3_3(\eta) + O^4_4(\eta).
\end{align}
Summing up \eqref{expansion_diss_1} and \eqref{expansion_diss_2}, we see a cancellation of the linear term $2\beta_* w_* \cdot \delta E(\eta)$. We integrate, use Claim \ref{cl:O_IPP} (and that $\| \eta \|_{H^1} \le 1$) to bound the terms of order at least $3$ in $\eta$:
\begin{align*}
\MoveEqLeft \int |\delta E(m)|^2 dx - \int |m \cdot \delta E(m)|^2 dx =  \int |\delta E(\eta)|^2 dx \\
& \qquad  - 2 \int  \beta_*^2 \mu dx -  2 \int \beta_*  \eta \cdot \delta E(\eta) dx - \int  |w_* \cdot \delta E(\eta)|^2 dx + O(\| \eta \|_{H^1} \| \eta \|_{H^2}^2).
\end{align*}

We now use the expansions obtained in Step 3 and \eqref{eq:mu_eta4}: observe a partial cancellation in the first and last integral of the above right hand side, so that we have
\begin{align*}
\MoveEqLeft \int |\delta E(m)|^2 dx - \int |m \cdot \delta E(m)|^2 dx  \\
& = \int (- \partial_{xx} \nu + (1 - \gamma^2)  \nu)^2 dx + (- \partial_{xx} \rho + (1 - \gamma^2)  \rho)^2 dx + \int \beta_*^2 (\nu^2 + \rho^2) dx \\
& \quad - 2 \int \beta_* \left( - \nu \partial_{xx} \nu + (1 - \gamma^2) \nu^2 - \rho \partial_{xx} \rho + (1 - \gamma^2) \rho^2 \right) dx + O(\| \eta \|_{H^1} \| \eta \|_{H^2}^2) \\
& =  \int (- \partial_{xx} \nu + (1 - \gamma^2)  \nu - \beta_* \nu)^2 dx + (- \partial_{xx} \rho + (1 - \gamma^2)  \rho - \beta_* \rho)^2 dx + O(\| \eta \|_{H^1} \| \eta \|_{H^2}^2).
\end{align*}
In view of \eqref{eq:gamma_beta} and \eqref{def:Lg}, we conclude that
\[  \int |\delta E(m)|^2 dx - \int |m \cdot \delta E(m)|^2 dx = \| L_\gamma \nu \|_{L^2}^2 +  \| L_\gamma \rho \|_{L^2}^2 + O(\| \eta \|_{H^1} \| \eta \|_{H^2}^2). \]
Using finally \eqref{est:nu_rho_eta}, this establishes \eqref{E_diss}.

\bigskip

\emph{Step 6. Bound on the forcing term. Proof of \eqref{E_Efield}.}
Expanding $m=w_*+\eta$ in the forcing term
\begin{align}
\label{field_full}
(m \wedge e_1) \cdot (m \wedge \delta E(m)),
\end{align}
we show that the terms of order $0$ and $1$ in $\eta$ vanish when integrating. Indeed, the term of order $0$ in $\eta$ is $(w_* \wedge e_1) \cdot (w_* \wedge \delta E(w_*))  =0$ pointwise, so does not contribute to the integral. The term of order $1$ in $\eta$ is
\begin{align*} 
\MoveEqLeft (\eta \wedge e_1)  \cdot (w_* \wedge \delta E(w_*)) + (w_* \wedge e_1) \cdot \left(\eta \wedge \delta E(w_*) +w_* \wedge \delta E(\eta) \right)\\
&=0 - \sin \theta_* p_* \cdot \left[ (\eta \wedge \beta_* w_*) + (w_* \wedge \delta E(\eta)) \right] \\
& = \beta_* \sin \theta_* \nu - \sin \theta_*  \left(- \partial_{xx} \nu+ (1 - \gamma^2) \nu \right) + O_2^2(\eta) \\
& =  - \sin \theta_* L_\gamma \nu + O_2^2(\eta),
\end{align*}
 where we used $w_* \wedge e_1 = -\sin(\theta_*) p_*$, the expansion of $w_* \wedge \delta E(\eta)$ in Step 2 and \eqref{eq:gamma_beta}.
 
Recalling Lemma \ref{lem:Lg}, there hold
 \[ \int \sin \theta_* L_\gamma \nu  dx = 0. \]
Therefore, the contribution of the terms of order $1$ in $\eta$ is
\[ \int O_2^2(\eta) dx = O(\| \eta \|_{H^1}^2). \]
The quadratic term in $\eta$ of \eqref{field_full} is 
\begin{align*}
(\eta \wedge e_1)  \cdot [\eta \wedge \delta E(w_*) + w_* \wedge \delta E(\eta)] + (w_* \wedge e_1) \cdot (\eta \wedge \delta E(\eta)).
\end{align*}
As $\delta E(\eta) = O^1_2(\eta)$, the quadratic term in $\eta$ is clearly $O_2^2(\eta)$, and so, using Claim \ref{cl:O_IPP},  its contribution after integration is $O(\| \eta \|_{H^1}^2)$.

Finally, the cubic term in $\eta$ is $(\eta \wedge e_1)  \cdot (\eta \wedge \delta E(\eta)) = O^3_2(\eta)$ and again, due to Claim \ref{cl:O_IPP}, its contribution after integration is $O(\| \eta \|_{H^1}^3)$. As $\| \eta \|_{H^1}$ is small, we proved that
\[ \int (m \wedge e_1) \cdot (m \wedge \delta E(m)) dx = O(\| \eta \|_{H^1}^2),\]
 which proves \eqref{E_Efield}.
\end{proof}

\subsection{Proof of Theorem \ref{th:stab} when \texorpdfstring{$m_0 \in \q H^2$}{m0 dans qH2}} \label{sec:4.3}

Our goal is to define $\delta_0 >0$ of the statement of Theorem \ref{th:stab}.

\bigskip

Denote $\delta_*>0$ and $C_*>0$ the constants given by Lemma \ref{lem:H_H1_2} in Appendix \ref{append} for the magnetisation $w_*$: we can assume that $\delta_0 \le \delta_*$ so that
\begin{align} \label{est:m0-w*}
\| m_0 - w_* \|_{H^1} \le C_* \| m_0 - w_* \|_{\q H^1} \le C_* \delta_0.
\end{align}
By Theorem \ref{th:lwp}, there exists a unique solution $m \in \q C([0, T_+), \q H^2)$ to \eqref{ll} with initial data $m_0$. In order to prove Theorem \ref{th:stab}, we introduce a bootstrap argument, and for this, we need an extra large parameter $M \ge 2C_*$ (to be fixed later).
Define 
\[T_0 = T_0(m_0):=\sup  \left\{ T \in (0,T_+(m_0)) : \forall t \in [0,T], \quad \inf_{\zeta \in G} \| m(t) - \zeta. w_* \|_{H^1} < M \delta_0 \right\}, \]
where $T_+(m_0)$ is defined in Theorem \ref{th:lwp}.
By continuity of the \eqref{ll} flow in Theorem \ref{th:lwp}, in view of \eqref{est:m0-w*}, $T_0$ is well defined, i.e., $T_0 >0$. Also, reducing further $\delta_0$ if needed, we can assume that $M \delta_0 < \delta_2$ with $\delta_2$ defined in Proposition \ref{lem:decomp}. 

Our goal is to prove that $T_0 = T_+(m_0) = +\infty$. For that, let $T < T_0$. We work on the interval $[0,T]$.

\bigskip

\emph{Step 1. Main bootstrap estimates}.  Due to the definition of $T_0$ and $M\delta_0 \le \delta_2$, the decomposition in Proposition \ref{lem:decomp} applies to $m$ on $[0,T]$, and there exist $g: [0,T] \to G$ Lipschitz continuous and $\e \in \q C([0,T], H^2)$ with $\partial_t \e \in L^\infty([0,T], L^2)$, and there hold $m = g.(w_*+\e)$,
\begin{gather*}
\forall t \in [0,T], \quad |\dot g(t) - \dot g_*(t)| \le C_2 (1+|h(t)|) \| \e(t) \|_{H^1}, \quad  \| \e(t) \|_{H^1} \le C_2 M \delta_0,
\end{gather*}
and
\begin{align} \label{eq:ortho_e}
\forall t \in [0,T], \quad \int \e(t) \cdot \partial_x w_*(t) dx = \int \e(t) \cdot (e_1 \wedge w_*(t)) dx =0.
\end{align}
Also note that ( due to Lemma \ref{lem:mod})
\begin{align}
\label{est:e0}
\| \e(0) \|_{H^1} \le C_1 \| m_0 - w_* \|_{H^1} \le C_* C_1 \delta_0.
\end{align}
We decompose $\e$ on the basis $(w_*, n_*, p_*)$
\[ \e = \mu w_* + \nu n_* + \rho p_*, \]
and provided that $C_2 M\delta_0\le \delta_3$, we are in a position to apply Proposition \ref{prop:en_expansion} to $(-g(t)).m$: we get on $[0,T]$,
\begin{align}
\label{est:e_nu_rho} \| \e \|_{H^1} \le C_3  \| (\nu,\rho) \|_{H^1} \le  C_3^2 \| \e \|_{H^1} & \le C_3^2 C_2 M \delta_0, \\
\nonumber |E(m) - E(w_*) - \frac{1}{2} \left((L_\gamma \nu, \nu) + (L_\gamma \rho, \rho) \right) | & \le C_3 \| (\nu,\rho) \|_{H^1}^3, \\
\nonumber \left| \frac{d}{dt} E(m) + \alpha ( \| L_\gamma \nu \|_{L^2}^2 + \| L_\gamma \rho \|_{L^2}^2) \right| & \le C_3 \alpha  \| (\nu,\rho) \|_{H^2}^2  \| (\nu,\rho) \|_{H^1} + C_3 \alpha |h|  \| (\nu,\rho) \|_{H^1}^2,
\end{align}
where we used \eqref{eq:en_dissip2} for the last line. Now, the orthogonality conditions \eqref{eq:ortho_e} (together with \eqref{numbe}) write
\begin{align*}
0 & = \int \e \cdot \partial_x w_* dx = \sqrt{1-\gamma^2} \int \e \cdot \left( -\sin \theta_* n_* 
 - \frac{\gamma}{\sqrt{1-\gamma^2}}
 \sin \theta_* p_* \right) dx \quad \text{yielding} \\
0 & = \int \left( \nu \sin \theta_* + \frac{\gamma}{\sqrt{1-\gamma^2}} \rho \sin \theta_* \right) dx, \quad \text{and} \\
0 & = \int \e \cdot (e_1 \wedge w_*) dx = \int \sin \theta_*  \e \cdot p_* dx = \int \rho \sin \theta_* dx.
\end{align*}
We deduce
\[ \int \nu \sin \theta_* dx = \int \rho \sin \theta_* dx =0. \]
As a consequence, using Lemma \ref{lem:Lg}, \eqref{est:e_nu_rho} yields
\begin{align*}
2 \lambda_0 \| (\nu ,\rho) \|_{H^1}^2  & \le E(m) - E(w_*) + C_3 \| (\nu ,\rho) \|_{H^1}^3, \\
E(m) - E(w_*) & \le \frac1{2\lambda_0}  \| (\nu ,\rho) \|_{H^1}^2+ C_3 \| (\nu ,\rho) \|_{H^1}^3, \\
\frac{d}{dt} (E(m) - E(w_*)) + 4 \lambda_0 \alpha \| (\nu ,\rho) \|_{H^2}^2 & \le C_3 \alpha  \| (\nu,\rho) \|_{H^2}^2  \| (\nu,\rho) \|_{H^1} + C_3 \alpha |h|  \| (\nu,\rho) \|_{H^1}^2.
\end{align*}

\bigskip

\emph{Step 2. $T_0 = T_+(m_0)$.}
Recall that $\|h\|_{L^\infty}\le \delta_0$.
Given $M$, choose $\delta_0>0$ such that 
\begin{enumerate}
\item $C_3 \delta_0 \le \lambda_0$, so that by \eqref{est:e_nu_rho},  $C_3 |h|  \| (\nu,\rho) \|_{H^1}^2 \le \lambda_0 \| (\nu ,\rho) \|_{H^1}^2$ on $[0,T]$,
\item $C_2 C_3^2 M\delta_0 \le \lambda_0$ so that $C_3 \| (\nu,\rho) \|_{H^1} \le \lambda_0$ on $[0,T]$.
\end{enumerate}
This justifies all the expansions and bounds above (up to choosing $M$), and (letting $\lambda_1>0$ such that $\frac1{\lambda_1} = \frac{1}{2\lambda_0^2} +1$), we obtain that for all $t \in [0,T]$:
\begin{align}
  \| (\nu ,\rho) \|_{H^1}^2 & \le \frac{1}{\lambda_0} (E(m) - E(w_*)) \le \frac{1}{\lambda_1}  \| (\nu ,\rho) \|_{H^1}^2, \label{est:gr_1} \\
\frac{d}{dt} (E(m) - E(w_*)) & \le - 2 \lambda_0 \alpha \| (\nu ,\rho) \|_{H^2}^2 \le - 2\lambda_0 \alpha  \| (\nu ,\rho) \|_{H^1}^2. \label{est:gr_2}
\end{align}

\medskip
{
We use the following Gronwall type bound: if two functions $p,q:[0,T]\to \R$ satisfy 
\[ 0\le p\le \frac1{\lambda_0}  q \le \frac1{\lambda_1}  p \quad \text{and} \quad \dot{q}\le -\lambda_2 p \quad \text{on } [0,T], \]
for some constants $\lambda_0, \lambda_1, \lambda_2>0$, then 
\begin{align} \label{gronwall}
 \forall t \in [0,T], \quad q(t) \le \frac{\lambda_0 p(0)}{\lambda_1} \exp(-\frac {\lambda_1\lambda_2}{\lambda_0}t) \quad \text{and} \quad p(t) \le  \frac{p(0)}{\lambda_1} \exp(-\frac {\lambda_1\lambda_2}{\lambda_0}t).
\end{align}

\medskip

We use the above fact for $p:= \| (\nu ,\rho) \|_{H^1}^2$, $q:=E(m) - E(w_*)$ and $\lambda_2=2\lambda_0 \alpha$. By Lemma \ref{lem:mod} and \eqref{est:e_nu_rho}, we know that 
\[ p(0)\le C_3^2 \|\e(0)\|^2_{H^1}\le C_3^2 C_1^2 \|m_0-w_*\|^2_{H^1}\le C_3^2 C_1^2 C_*^2 \delta_0^2. \]
 Thus, \eqref{gronwall} together with  \eqref{est:e_nu_rho} yield
\be
\label{nou2}
\| \e(t) \|_{H^1}  \le C_3 \| (\nu ,\rho)(t) \|_{H^1} \le \frac{C_*C_1 C_3^2}{\sqrt{\lambda_1}}\delta_0 \exp(-\sigma t), \quad \forall t\in [0,T]\ee
where $\sigma:=\lambda_1 \alpha$.
We now choose $\ds M = 2 \max\left( C_*,  \frac{C_*C_1 C_3^2}{\sqrt{\lambda_1}} \right)$. Hence $\| \e(t) \|_{H^1} \le M\delta_0/2$ for all $t \in [0,T]$, and this bound is independent of $T < T_0$. 
Thus,  $M\delta_0/2\ge \| \e(t) \|_{H^1}=\|g(t). \e(t) \|_{H^1}\ge \inf_{\zeta\in G} \|m(t)-\zeta.w_*\|_{H^1}$, a continuity argument implies that $T_0=T_+(m_0)$. 

\bigskip

\emph{Step 3. $T_+(m_0)=+\infty$.}
For that, we have the uniform bound on $m$ in $\q H^1$:
\[ \forall t \in [0,T_+(m_0)), \quad \| m(t) \|_{\q H^1}=\| (-g(t)).m(t) \|_{\q H^1} \le \| w_* \|_{\q H^1} + \| \e(t) \|_{H^1} \le \| w_* \|_{\q H^1} + M \delta_0 \]
The blow up criterion of Theorem \ref{th:lwp} gives that $m$ is globally defined for positive times: $T_+(m_0)=+\infty$. In a nutshell,
\begin{align} \label{est:e_1st}
\forall t \ge 0, \quad \| \e(t) \|_{H^1} \le M \delta_0 \quad \text{and} \quad | \dot g (t) - \dot g_*(t) | \le C_2(1+ |h|) \| \e (t) \|_{H^1}.
\end{align}

\bigskip

\emph{Step 4. Exponential convergence of $g$ and $m$ in $H^1$.} By \eqref{nou2}, we have that
\[ \forall t \ge 0, \quad \|m(t)-g(t).w_*\|_{H^1}=\| \e(t) \|_{H^1} \le C \exp({-\sigma t}) \| m_0 - w_* \|_{\q H^1}, \]
where $\sigma = \alpha \lambda_1>0$ and $C >0$ is a large constant. The $\dot g$ part of \eqref{eq:decay} is then straightforward from \eqref{est:e_1st}. 

To prove \eqref{est:decay2}, note that \eqref{eq:decay} implies that the function $g-g_*$ has a limit $g_\infty\in G$ at $+\infty$ (as its derivative is integrable on $\R_+$) and therefore, for every $t\ge 0$,
\[ |g(t)-g_*(t)-g_\infty|\le \int_t^{\infty} | \dot g (s) - \dot g_*(s) |\, ds\le \tilde C \exp({-\sigma t}) \| m_0 - w_* \|_{\q H^1}. \]
By Lemma \ref{lem:mod}, we know that $|g(0)|\le C_1\| m_0 - w_* \|_{\q H^1}$ yielding  $|g_\infty|\le (\tilde C+C_1)\| m_0 - w_* \|_{\q H^1}$. Combined with \eqref{eq:decay}  and \eqref{here-a}, we conclude for every $t\ge 0$:
\begin{align*}
\|m(t)-(g_\infty+g_*(t)).w_*\|_{H^1}&\le \|m(t)-g(t).w_*\|_{H^1}+\|(g(t)-g_*(t)-g_\infty).w_*-w_*\|_{H^1}\\
&\le \hat C
\exp({-\sigma t}) \| m_0 - w_* \|_{\q H^1}.\end{align*}

\subsection{Proof of Theorem \ref{th:stab} when \texorpdfstring{$m_0 \in \q H^1$}{m0 dans qH1}}\label{sec:44}

We use a limiting argument. As $w_*\in \q H^2$ (the case treated above), we may assume $m_0\neq w_*$. By \eqref{def:e0}, the density argument at page \pageref{pag} yields a (smooth) sequence $m^n_{0} \in \q H^2$ such that $m^n_{0} \to m_0$ in $\q H^1$. Up to dropping the first terms, we can assume that for all $n \in \m N$,  $\| m^n_{0} - w_* \|_{H^1} \le 2 \| m(0) - w_* \|_{H^1} \le \delta_2$. Denote $m^n(t)\in \q C([0, T_+), \q H^2)$ the solution to \eqref{ll} with initial data $m^n_{0}$: due to the previous case of initial data $m^n_{0} \in \q H^2$ satisfying \eqref{def:e0}, we know that $m^n $ admits the decomposition
\begin{align} \label{eq:decomp_mn}
m^n = g^n.(w_* + \e^n),
\end{align}
and for some $g^n_{\infty} \in G$ uniformly bounded and  some universal constants $C,\sigma>0$, we have for all $t \ge 0$:
\begin{gather} \label{est:app_mn}
 \| \e^n(t) \|_{H^1} \le C e^{-\sigma t} \| m^n_{0} - w_* \|_{H^1}, \quad   | \dot g^n - \dot g_* | \le C \| \e^n(t) \|_{H^1}, \quad |g_\infty^n| \le C \| m_0 - w_* \|_{\q H^1}, \\
 |g^n(t) - (g^n_{\infty} + g_*(t))| \le 2 C e^{-\sigma t} \| m_{0} - w_* \|_{H^1}.  \label{est:app_mn2}
\end{gather}
As the flow of \eqref{ll} is continuous in $\q H^1$, we see that $m^n(t) \to m(t)$ in $\q H^1$ for all $t \in [0,T_+(m_0))$; by \eqref{eq:decomp_mn} and \eqref{est:app_mn}, $m^n$ is uniformly bounded in $\q H^1$ (in $n$ and $t$), so $m$ is too, and Theorem \ref{th:lwp} yields $T_+(m_0) = +\infty$. Then, from the bounds \eqref{est:app_mn}, $(g^n-g_*)_n$ is bounded in $W^{1,\infty}$. Using Ascoli's theorem and the uniform decay \eqref{est:app_mn2}, we infer that up to a subsequence, $ g^n - g\to 0$ in $ \q C_b([0,+\infty))$, $ \dot g^n- \dot g \to 0$ weakly-$*$ in $L^{\infty}([0,+\infty))$ and $g_\infty^n\to g_\infty$ in $G$ with $|g_\infty|\le C \| m_0 - w_* \|_{\q H^1}$.

As a consequence, $\e^n(t) = (-g^n(t)).m^n(t) -w_*$ converges in $\q C_b([0,+\infty), H^1)$: denote $\e$ its limit, by letting $n \to +\infty$ in the decomposition \eqref{eq:decomp_mn}, there holds $m(t) = g(t).(w_* + \e(t))$ for every $t\ge 0$. 
Finally, taking limits in \eqref{est:app_mn} and \eqref{est:app_mn2}, we deduce \eqref{eq:decay} and \eqref{est:decay2}: the proof of Theorem \ref{th:stab} is complete.

\appendix

\section{Toolbox}
\label{sec:tool}

 We compute for $a,b,c\in \m R^3$, $g= (y,\phi) \in G$, $m, \tilde m:\m R\to \m S^2$, $m=(m_1, m_2, m_3)$: 
 \[ a \wedge( b \wedge c) = (a\cdot c) b - (a \cdot b)c, \quad
e_1\wedge a=(a\cdot e_2) e_3-(a\cdot e_3) e_2,
\]
\[ R_\phi a\wedge R_\phi b=R_\phi (a\wedge b), \quad
 \partial_\phi (R_\phi a)=e_1\wedge (R_\phi a)=R_\phi (e_1\wedge a), \, \,\]
\[ g.m\wedge g.\tilde m=g.(m\wedge \tilde m),\, \, \partial_y (g.m) = -g. \partial_x m, \, \,  \partial_\phi (g.m) = e_1 \wedge g.m=g.(e_1 \wedge m),\] 
\[ \partial_x m \cdot (e_1\wedge m)=m_2\partial_x m_3-m_3\partial_x m_2, \quad |e_1\wedge m|^2=1-m_1^2.\]

\section{Some properties of the sets \texorpdfstring{$\q H^1$}{qH1} and \texorpdfstring{$H^1(\m R, \m S^2)$}{H1(R,S2)}}

\label{append}
We recall that for $m = (m_1,m_2,m_3) \in \m R^3$, we denote $m' = (m_2,m_3) \in \m R^2$ the last two coordinates.

For $s \ge 1$, we defined for every $m:\m R\to \m R^3$, 
\[ \| m \|_{\q H^s} : = \| m_2 \|_{L^2} + \| m_3 \|_{L^2} + \| m \|_{\dot H^s}, \]
where $\dot H^s$ was defined in \eqref{def:H^s}. We start by proving the following interpolation inequality\footnote{One can prove that $\q H^s(\m R, \m S^2)\subset \dot H^1(\m R, \m S^2)$ for every $s\ge 1$, but for our purposes,  in the following, we restrict to the case $s\in [1,2]$.} $\q H^s(\m R, \m S^2)\subset \dot H^1(\m R, \m S^2)$ for $s\in [1,2]$. We also prove the behaviour at $\pm \infty$ of maps in $\q H^s(\m R, \m S^2)$.

\begin{lem} \label{lem:Hs_H1}
Let $s \in [1,2]$. There exists $C>0$ such that for any $m: \m R \to \m S^2$ with $\| m \|_{\q H^s} <+\infty$, then $\partial_x m \in L^2$ and
\begin{equation} \label{est:H1_Hs}
 \| \partial_x m \|_{L^2} \le C \| m \|_{\q H^s} (1+  \| m \|_{\q H^s}^2 );
 \end{equation}
in particular, $E(m)<\infty$, and $m$ admits limits belonging to $\{ (\pm 1,0,0) \}$ as $x \to \pm\infty$.
\end{lem}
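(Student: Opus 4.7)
The plan is to split $\partial_x m$ into its first component and the other two, controlling the latter by direct linear interpolation and the former by exploiting the constraint $|m|=1$.

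First I would record the elementary interpolation $\|f\|_{\dot H^1} \le C_s \|f\|_{L^2}^{1-1/s}\|f\|_{\dot H^s}^{1/s}$ for $s \in [1,2]$, obtained by applying Hölder with exponents $s$ and $s/(s-1)$ to the Plancherel expression $\|f\|_{\dot H^1}^2 = \frac{1}{2\pi}\int |\xi|^2 |\hat f|^2 d\xi$. Applied to $m_2$ and $m_3$, which lie in $L^2 \cap \dot H^s$ with both norms bounded by $\|m\|_{\q H^s}$, this gives $\|\partial_x m_i\|_{L^2} \le C\|m\|_{\q H^s}$ for $i\in\{2,3\}$, so $m_2,m_3 \in H^1(\R)$ with $H^1$-norm $\lesssim \|m\|_{\q H^s}$. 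The embedding $H^1(\R) \hookrightarrow \q C_0(\R)$ then gives continuity of $m_2, m_3$ and vanishing at $\pm\infty$; in particular $|m'|^2 \to 0$ at $\pm\infty$, so $m_1^2 = 1 - |m'|^2 \to 1$. Since $\dot H^s \subset H^1_{\loc}(\R)$ for $s \ge 1$, the map $m$ is continuous, so $m_1$ is continuous and admits well-defined limits at $\pm\infty$, each in $\{\pm 1\}$.

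To estimate $\partial_x m_1$ I would exploit the identity obtained by differentiating $|m|^2 = 1$:
\[
m_1 \partial_x m_1 = -(m_2 \partial_x m_2 + m_3 \partial_x m_3).
\]
Cauchy--Schwarz combined with $|m'|\le 1$ yields the pointwise inequality $m_1^2 (\partial_x m_1)^2 \le |\partial_x m'|^2$. On the set $A := \{|m_1| \ge 1/2\}$ this immediately gives $(\partial_x m_1)^2 \le 4|\partial_x m'|^2$, hence $\int_A (\partial_x m_1)^2 dx \lesssim \|m\|_{\q H^s}^2$. On the complement $A^c$, one has $|m'|^2 \ge 3/4$, whence $|A^c| \le \tfrac{4}{3}\|m'\|_{L^2}^2 \lesssim \|m\|_{\q H^s}^2$; moreover $A^c$ is bounded, because $|m_1| \to 1$ at $\pm \infty$.

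The main obstacle is the estimate on $A^c$, where the sphere identity degenerates as $m_1$ can vanish. Here I would resort to a local argument on the compact set $A^c$: since $m_1 \in L^\infty$ with $\|m_1\|_{L^\infty}\le 1$ and $m_1 \in \dot H^s_{\loc}$, a one-dimensional Gagliardo--Nirenberg inequality on a finite union of intervals containing $A^c$ (trivial for $s=1$, and obtained by interpolation between $L^\infty$ and $\dot H^s$ for $s \in (1,2]$), together with $|A^c| \lesssim \|m\|_{\q H^s}^2$, will produce a bound of the form $\|\partial_x m_1\|_{L^2(A^c)} \lesssim \|m\|_{\q H^s}(1 + \|m\|_{\q H^s}^2)$, which is the origin of the polynomial factor in \eqref{est:H1_Hs}. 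Summing the two contributions gives the desired inequality. Finally, $E(m) < \infty$ follows at once: $\|\partial_x m\|_{L^2}^2 < \infty$, $\int (1-m_1^2)dx = \|m'\|_{L^2}^2 < \infty$, and $|\int \partial_x m \cdot (e_1\wedge m)dx| \le \|\partial_x m\|_{L^2}\|e_1\wedge m\|_{L^2} < \infty$ by Cauchy--Schwarz.
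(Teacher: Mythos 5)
Your first half matches the paper closely: the interpolation $\|f\|_{\dot H^1}\le \|f\|_{L^2}^{1-1/s}\|f\|_{\dot H^s}^{1/s}$ to put $m_2,m_3$ into $H^1$ with norm $\lesssim\|m\|_{\q H^s}$, the identity $m_1\partial_x m_1 = -m'\cdot\partial_x m'$ to handle the region where $|m_1|$ is bounded below, and the measure bound $|A^c|\lesssim\|m\|_{\q H^s}^2$. All of this is sound and is essentially what the paper does (the paper works with $f=\sqrt{1-m_1^2}$ rather than $m'$ directly, but these are interchangeable here).

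The gap is precisely in the step you flag as the ``main obstacle,'' and the proposed fix does not close it. On the open set $A^c=\{|m_1|<1/2\}$, write $A^c=\bigcup_k I_k$ with $I_k=(a_k^-,a_k^+)$ and $|m_1(a_k^\pm)|=1/2$. On each $I_k$ one has
\[
\|\partial_x m_1\|_{L^2(I_k)}^2 \lesssim \Big\|\partial_x m_1 - \fint_{I_k}\partial_x m_1\Big\|_{L^2(I_k)}^2 + \frac{|m_1(a_k^+)-m_1(a_k^-)|^2}{|I_k|}.
\]
The first (oscillation) piece is indeed controlled by a fractional Poincar\'e/Gagliardo--Nirenberg argument plus $|A^c|\lesssim\|m\|_{\q H^s}^2$; this corresponds to the paper's $J_1$. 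But the second piece is not: whenever $m_1$ changes sign on $I_k$, $|m_1(a_k^+)-m_1(a_k^-)|=1$ and the contribution is $1/|I_k|$, which is \emph{not} small when $|I_k|$ is small. The measure bound $\sum_k|I_k|\lesssim\|m\|_{\q H^s}^2$ gives no upper bound on $\sum_k 1/|I_k|$: the set $A^c$ could consist of many very short intervals with a correspondingly huge sum. An interpolation inequality between $L^\infty$ and $\dot H^s$ \emph{on an interval} always carries a lower-order term of size $\sim|I_k|^{-\beta}\|m_1\|_{L^\infty}$, and the global inequality $\|\partial_x f\|_{L^2(\R)}^2\lesssim\|f\|_{L^\infty}\|f\|_{\dot H^2}$ is simply false (test on $f\approx\sin(x)\m 1_{|x|\le N}$). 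So the assertion ``will produce a bound of the form \eqref{est:H1_Hs}'' is not justified.

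The missing idea, which is the heart of the paper's proof, is a \emph{lower bound on $|I_k|$} for the sign-changing intervals. If $m_1$ changes sign on $I_k$, there is $b_k\in I_k$ with $m_1(b_k)=0$, hence $\sqrt{1-m_1^2}$ moves from $\sqrt 3/2$ at $a_k^-$ to $1$ at $b_k$, a fixed gap; Cauchy--Schwarz then gives $|I_k|\gtrsim \big(\int_{I_k}|\partial_x\sqrt{1-m_1^2}|^2\big)^{-1}$, and summing the reciprocals yields $\sum_k 1/|I_k|\lesssim \|\sqrt{1-m_1^2}\|_{\dot H^1}^2\lesssim\|m\|_{\q H^s}^2$. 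This is the ingredient you would need to add; without it the argument on $A^c$ is incomplete.
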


\begin{proof} We divide the proof in two steps:

\medskip

\nd {\it Step 1:  $s=1$}. In this case, the desired inequality follows by the definition of $\q H^1$. Note that 
\[ |\partial_x m\cdot (e_1\wedge m)|\le |\partial_x m'|\cdot |m'|\le \frac12(|\partial_x m'|^2+ |m'|^2). \]
We infer that $E(m)\le C  \| m \|^2_{\q H^1}$.

\medskip

\nd {\it Step 2:  $s \in(1,2]$.} First observe that $m_2 \in H^s=L^2\cap \dot H^s$ and by interpolation, we have
\[ \| \partial_x m_2 \|_{L^2} \lesssim \| m_2 \|_{L^2} + \| m_2 \|_{\dot H^s}  \lesssim \| m \|_{\q H^s}, \]
and similarly $m_3 \in H^s$ and $\| \partial_x m_3 \|_{L^2} \lesssim \| m \|_{\q H^s}$.
In particular, $|m_2|, |m_3| \in H^1$; as $m$ takes its values in $\m S^2$, this means that $\sqrt{1-m_1^2} = |(m_2,m_3)| \in H^1$ and
\be
\label{12}
 \| \sqrt{1-m_1^2}  \|_{H^1} \lesssim \| (m_2,m_3) \|_{H^1} \lesssim \| m \|_{\q H^s}. 
 \ee
In particular,  the functions $m_2, m_3, |m_1|$ are continuous and have the following limits $m_2(\pm \infty)=m_3(\pm \infty)=0$ and $|m_1|(\pm \infty)=1$.

We now consider 
\[ A : = \{ x \in \m R: |m_1(x)| < \frac{1}{2} \} \]
which is an open bounded set. First, we estimate the length $|A|$ of $A$:
\be 
\label{estA}
\| m \|_{\q H^s}^2 \ge \int_{\R} (1-m_1^2) \, dx\ge \int_A \frac{3}{4}\, dx \ge \frac{3}{4} |A|. \ee
Next, we estimate the $L^2$ norm of $\partial_x m_1$. On the set $A^c = \R \setminus A$, one has
\[ |\partial_x m_1| \le 2 \frac{|m_1 \partial_x m_1|}{\sqrt{1-m_1^2}} = 2 \big| \partial_x \sqrt{1-m_1^2}\big|\quad \textrm{a.e. in } A^c.\]
Therefore, by \eqref{12},
\[ \int_{A^c} |\partial_x m_1|^2 \, dx\le 4 \| \partial_x \sqrt{1-m_1^2} \|_{L^2}^2 \lesssim \| m \|_{\q H^s}^2. \]
It remains to estimate the $L^2$ norm of $\partial_x m_1$ on $A$. 
As $A$ is open and bounded, we write
\[ A = \bigcup_{k\in \q K} I_k, \quad I_k = (a_k^-, a_k^+), \quad |m_1(a_k^\pm)|=\frac12,\]
where $\{I_k\}_{k\in \q K}$ is a (at most) countable family of disjoint open bounded intervals.
We have
\begin{align*}
\int_A |\partial_x m_1|^2 \, dx& = \sum_{k\in \q K} \int_{I_k} |\partial_x m_1|^2\, dx \\
& \le 2 \sum_{k\in \q K}  \left(\int_{I_k} \big|\partial_x m_1 - \fint_{I_k} \partial_x m_1  \big|^2 \, dx+ |I_k| \big( \fint_{I_k} \partial_x m_1 \big)^2 \right) = 2 (J_1 + J_2),
\end{align*}
where $ \fint_{I_k}$ denotes the average on the bounded interval $I_k$.

\nd {\it Estimating $J_2$}. Let $\q J = \big\{ k \in \q K\, : \, m_1(a_k^-) \ne m_1(a_k^+) \big\}$. Then 
$|m_1(a_k^-) - m_1(a_k^+)|=1$ if $k \in \q J$ and $m_1(a_k^-) - m_1(a_k^+) =0$ if $k \notin \q J$. Thus,
\begin{align*}
J_2 = \sum_{k\in \q K} \big|I_k\big|  \left( \fint_{I_k} \partial_x m_1 \right)^2 = \sum_{k \in \q J} \frac{1}{|I_k|}. 
\end{align*}
To estimate the length $|I_k|$ of $I_k$ for $k\in \q J$, as $m_1(a_k^-)$ and $m_1(a_k^+)$ have different sign, 
the continuity of $m_1$ implies the existence of $b_k \in (a_k^-, a_k^+)$ such that $m_1(b_k) =0$. Denoting by $f = \sqrt{1-m_1^2} \in H^1$ (by \eqref{12}), the Cauchy-Schwarz inequality implies
\[ 1- \frac{\sqrt{3}}{2} = | f(a_k^-) - f(b_k)|\le \int_{a_k^-}^{b_k} |\partial_x f| \, dx\le |b_k - a_k^-|^{1/2} \left( \int_{a_k^-}^{b_k} |\partial_x f|^2 \, dx\right)^{1/2}. \]
Arguing similarly on $[b_k,a_k^+]$, we get after squaring, for $c = 7/2 - 2\sqrt 3 >0$:
\[ c  \le (|b_k - a_k^-| + |b_k - a_k^+|)  \int_{I_k} |\partial_x  f|^2\, dx = |I_k|   \int_{I_k} |\partial_x f|^2\, dx . \]
Therefore, by \eqref{12},
\[ J_2=\sum_{k \in \q J} \frac{1}{|I_k|} \le \frac{1}{c} \sum_{k \in \q J}   \int_{I_k} |\partial_x f|^2 \, dx\lesssim \int_{\R}  |\partial_x f|^2 \, dx\lesssim \| m \|_{\q H^s}^2. \]

\nd {\it Estimating $J_1$}. We consider separately the cases $s \in (1,2)$ and $s=2$. 

\nd {\it Case 1. Assume that $s \in (1,2)$.} We have by Jensen's inequality:
\begin{align*}
J_1 & = \sum_{k\in \q K} \int_{I_k}  \left|\partial_x m_1 - \fint_{I_k} \partial_x m_1  \right|^2 \, dx\\
& \le \sum_{k\in \q K} \int_{I_k} \int_{I_k} \frac{|\partial_x m_1(x) - \partial_x m_1(y)  |^2}{| I_k|} dydx \\
& \le \sum_{k\in \q K} \int_{I_k} \int_{I_k} \frac{|\partial_x m_1(x) - \partial_x m_1(y)  |^2}{| x-y|^{1+2(s-1)}} | I_k |^{2(s-1)} dydx \\
& \le  |A|^{2(s-1)}   \iint_{\m R^2}  \frac{|\partial_x m_1(x) - \partial_x m_1(y)  |^2}{| x-y|^{1+2(s-1)}}   dxdy \\
& \lesssim |A|^{2(s-1)} \| \partial_x m_1 \|^2_{\dot H^{s-1}} \lesssim  |A|^{2(s-1)}  \| m \|_{\q H^s}^2 \lesssim \| m \|_{\q H^s}^{2+4(s-1)},
\end{align*}
where we used \eqref{estA}. As $2+4(s-1) \le 6$, this completes the case when $s <2$.

\medskip

\nd {\it Case 2. Assume that $s=2$.} As before, we have by Jensen's inequality
\begin{align*}
J_1 & =  \sum_{k\in \q K} \int_{I_k}  \left| \fint_{I_k} \big(\partial_x m_1(x) - \partial_x m_1(y)\big) dy \right|^2 dx \\
&\le  \sum_{k\in \q K} \int_{I_k}   \fint_{I_k} \left| \int_x^y \partial_{xx} m_1(z) dz \right|^2 dydx \\
& \le \sum_{k\in \q K} \int_{I_k}   \fint_{I_k}  |x-y| \left( \int_x^y |\partial_{xx} m_1(z)|^2 dz \right) dydx \\
&\le \sum_{k\in \q K}  |I_k|^2 \int_{I_k} |\partial_{xx} m_1(z)|^2 \, dz
\le |A|^2 \int_A |\partial_{xx} m_1|^2 \, dz \lesssim  \| m \|_{\q H^s}^6,
\end{align*}
where we used \eqref{estA}. The bound \eqref{est:H1_Hs} follows.

Finally, $m_1$ is continuous (as $\partial_x m_1 \in L^2$) and we saw that $|m_1| \to 1$ as $x \to +\infty$. As $\{ \pm 1 \}$ is discrete, 
we infer that $m_1 \to a^+$ as $x \to +\infty$ with $a^+ \in \{ \pm 1 \}$. The same argument shows that  $m_1 \to a^-$ as $x \to -\infty$ with $a^- \in \{ \pm 1 \}$.
\end{proof}

{In the next lemma, we prove the reverse inequality with respect to Lemma \ref{lem:Hs_H1} relating $E(m)$ and $\|m\|_{\q H^1}$. In other words, the seminorm $\|\cdot\|_{\q H^1}$ fully characterises the set of finite energy configurations $E(m)<\infty$. 

\begin{lem}
\label{lem:infinit}
If $\gamma^2<1$ then there exists a constant $C_\gamma>0$ such that 
\[ E(m)\ge C_\gamma \|m\|^2_{\q H^1} \quad \textrm{for every } m: \m R \to \m S^2. \]
\end{lem}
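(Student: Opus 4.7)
The plan is to show pointwise that the energy density dominates $c_\gamma(|\partial_x m|^2 + (1-m_1^2))$ for some $c_\gamma > 0$, which is the main quantity appearing in $\|m\|_{\q H^1}^2$ (using the identity $1-m_1^2 = m_2^2+m_3^2$). This reduces to a careful use of Cauchy-Schwarz and Young's inequality on the DMI cross term, with a weight chosen so that the assumption $\gamma^2 < 1$ is exploited.

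More precisely, I would first use $|e_1 \wedge m|^2 = 1 - m_1^2$ together with Cauchy-Schwarz to bound pointwise
\[
|2\gamma \partial_x m \cdot (e_1 \wedge m)| \le 2|\gamma|\,|\partial_x m|\sqrt{1-m_1^2}.
\]
Then I would apply Young's inequality with a parameter $\theta \in (0, 1-\gamma^2)$: for any such $\theta$,
\[
2|\gamma|\,|\partial_x m|\sqrt{1-m_1^2} \le (1-\theta) |\partial_x m|^2 + \frac{\gamma^2}{1-\theta}(1-m_1^2).
\]
Substituting back into the energy density yields
\[
|\partial_x m|^2 + 2\gamma \partial_x m \cdot (e_1 \wedge m) + (1-m_1^2) \ge \theta |\partial_x m|^2 + \Bigl(1 - \frac{\gamma^2}{1-\theta}\Bigr)(1-m_1^2).
\]
For $\theta = (1-\gamma^2)/2$, both coefficients $\theta = (1-\gamma^2)/2$ and $1 - \frac{\gamma^2}{1-\theta} = \frac{1-\gamma^2}{1+\gamma^2}$ are strictly positive. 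This is exactly where the hypothesis $\gamma^2 < 1$ is used: without it, no such $\theta$ exists.

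Setting $c_\gamma := \min\bigl(\frac{1-\gamma^2}{2}, \frac{1-\gamma^2}{1+\gamma^2}\bigr) > 0$ and integrating over $\R$ gives
\[
2 E(m) \ge c_\gamma \int_{\R} |\partial_x m|^2 + (1 - m_1^2) \, dx = c_\gamma \bigl(\|\partial_x m\|_{L^2}^2 + \|m_2\|_{L^2}^2 + \|m_3\|_{L^2}^2\bigr).
\]
Finally, using $(a+b+c)^2 \le 3(a^2+b^2+c^2)$ with $a = \|m_2\|_{L^2}$, $b = \|m_3\|_{L^2}$, $c = \|\partial_x m\|_{L^2}$ yields $\|m\|_{\q H^1}^2 \le 3(\|m_2\|_{L^2}^2 + \|m_3\|_{L^2}^2 + \|\partial_x m\|_{L^2}^2)$, and the conclusion follows with $C_\gamma = c_\gamma/6$. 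There is no real obstacle here; the only subtle point is that the optimal absorption weight must leave room for both the exchange term $|\partial_x m|^2$ and the anisotropy term $1-m_1^2$ to retain positive coefficients, which is possible precisely under the strict condition $\gamma^2 < 1$.
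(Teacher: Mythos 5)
Your proof is correct: the identity $|e_1\wedge m|^2=1-m_1^2$, the pointwise Cauchy--Schwarz bound, the Young absorption with weight $\theta=(1-\gamma^2)/2$, and the final $(a+b+c)^2\le 3(a^2+b^2+c^2)$ step are all valid, and the hypothesis $\gamma^2<1$ enters exactly as you say. The paper's proof is in the same spirit (absorb the DMI cross term into the exchange and anisotropy terms) but executes it as a \emph{completion of the square}: it picks $a,b\in(-1,1)$ with $ab=\gamma$ and writes
\[
a^2|\partial_x m'|^2 + 2\gamma\,\partial_x m\cdot(e_1\wedge m) + b^2|m'|^2
= (a\partial_x m_2 - b m_3)^2 + (a\partial_x m_3 + b m_2)^2 \ge 0,
\]
using the exact identity $\partial_x m\cdot(e_1\wedge m)=m_2\partial_x m_3 - m_3\partial_x m_2$. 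This is slightly sharper than your bound because it only spends part of the transverse derivative $|\partial_x m'|^2$ to absorb the cross term, leaving $(\partial_x m_1)^2$ untouched; your Cauchy--Schwarz estimate $|\partial_x m\cdot(e_1\wedge m)|\le|\partial_x m|\sqrt{1-m_1^2}$ uses the full gradient $|\partial_x m|$ when the cross term in fact only sees $\partial_x m'$. The paper also gets a cleaner constant ($C_\gamma=\frac{1-|\gamma|}{2}$ with $a^2=b^2=|\gamma|$, for the slightly different quantity $\int |\partial_x m|^2+|m'|^2$). Both routes are equally legitimate for the qualitative statement; the paper's algebraic identity buys a bit more precision at no extra cost, which is why the authors prefer it and reuse the same trick in the proof of Lemma~\ref{lem:coercive}.
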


\begin{proof}
Let $m=(m_1, m_2, m_3):\R\to \m S^2$ and $m'=(m_2, m_3)$. As $|\gamma|<1$, we can choose $a,b\in (-1,1)$ such that $ab=\gamma$. Then 
\[ a^2|\partial_x m'|^2+2\gamma \partial_x m\cdot (e_1 \wedge m)+b^2 |m'|^2=(a\partial_x m_{2}-b m_{3})^2+(a\partial_x m_{3}+b m_{2})^2. \]
Therefore, 
\[ E(m)\ge \frac12 \int_\R (\partial_x m_1)^2+(1-a^2) |\partial_x m'|^2 +(1-b^2) |m'|^2\, dx\ge C_\gamma   \|m\|^2_{\q H^1}. \]
(If one chooses $a^2=b^2=|\gamma|$, then $C_\gamma$ can be taken equal to $\frac{1-|\gamma|}2$). 
\end{proof}

In the following, we study more closely the norms $\| \cdot \|_{\q H^1}$ and $\| \cdot \|_{H^1}$ for maps with target $\m S^2$. First, note that Lemma \ref{lem:infinit} implies that two maps $m, \tilde m\in \q H^1(\m R, \m S^2)$ may have different limits at $\pm \infty$, so $m-\tilde m$ might not belong to $L^2$ (e.g., $\tilde m=-m$). However, if $m$ and $\tilde m$ have the same limits at $\pm \infty$, then we show that $m-\tilde m\in L^2$. Next we prove an important stability property for non-constant maps: if $m$ and $\tilde m$ are close to each other in $\q H^1$ and they are not constant, then they have the same limits at $\pm \infty$ and $m-\tilde m$ is small in $L^2$.

\begin{prop} \label{prop:conv_H_H1}
1) Let $m, \tilde m\in \q H^1(\m R, \m S^2)$ with the same limits at infinity, i.e., $m(\pm \infty) = \tilde m(\pm \infty)$. Then $m-\tilde m \in H^1$.

2) Let $m\in \q H^1(\m R, \m S^2)$ such that $m_1$ is not a constant function. Then for all $\e>0$, there exists $\delta = \delta(m)>0$ such that if $\tilde m\in \q H^1(\m R, \m S^2)$ satisfies $\| m - \tilde m \|_{\q H^1} \le \delta$, then
\begin{equation} \label{H_and_H1}
\| m - \tilde m \|_{H^1} \le \e; 
\end{equation}
in particular, $m(\pm \infty) = \tilde m(\pm \infty)$.
\end{prop}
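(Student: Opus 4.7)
The plan is to handle the two assertions in turn.

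For Part 1, the main tool is the pointwise identity
\[ (m_1 - \tilde m_1)(m_1 + \tilde m_1) = -\sum_{i=2,3} (\tilde m_i - m_i)(\tilde m_i + m_i), \]
which follows from $|m|=|\tilde m|=1$; its right-hand side lies in $L^2$ since each $\tilde m_i-m_i\in L^2$ and $\tilde m_i+m_i$ is bounded. The hypothesis $m(\pm\infty)=\tilde m(\pm\infty)\in\{\pm e_1\}$ forces $|m_1+\tilde m_1|\to 2$ at $\pm\infty$, so there is $R>0$ with $|m_1+\tilde m_1|\ge 1$ on $\{|x|\ge R\}$; dividing the identity there yields $m_1-\tilde m_1\in L^2(\{|x|\ge R\})$, while on the bounded interval $[-R,R]$ this difference is trivially in $L^2$. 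Combined with $\partial_x(m-\tilde m)\in L^2$ and the trivial $L^2$-control on components $2,3$, we obtain $m-\tilde m\in H^1$.

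For Part 2, I would argue by contradiction. Assume there exist $\e>0$ and a sequence $\tilde m^n\in\q H^1(\m R,\m S^2)$ with $\|\tilde m^n-m\|_{\q H^1}\to 0$ yet $\|\tilde m^n-m\|_{H^1}\ge\e$. From the $\q H^1$-convergence one has $\tilde m^n_i\to m_i$ in $H^1\hookrightarrow L^\infty$ for $i=2,3$, hence $(\tilde m^n_1)^2=1-(\tilde m^n_2)^2-(\tilde m^n_3)^2\to m_1^2$ in $L^\infty$, while $\partial_x\tilde m^n_1\to\partial_x m_1$ in $L^2$. The crux is to upgrade $|\tilde m^n_1|\to|m_1|$ into $\tilde m^n_1\to m_1$ locally, ruling out a global sign-flip; this is the main obstacle. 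Since $\tilde m^n_1$ is bounded in $H^1_\loc$, up to extraction $\tilde m^n_1\to f_\infty$ weakly in $H^1_\loc$ and strongly in $\q C^0_\loc$. Identifying derivatives against the $L^2$-strong limit $\partial_x m_1$ yields $\partial_x f_\infty=\partial_x m_1$, so $f_\infty=m_1+c$ for some constant $c\in\m R$; the uniform limit $(m_1+c)^2\equiv m_1^2$ then gives $c(c+2m_1)\equiv 0$, and the assumption that $m_1$ is not constant forces $c=0$. By uniqueness of the identified subsequential limit, $\tilde m^n_1\to m_1$ in $L^\infty_\loc$ along the full sequence.

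To conclude, I would first match the limits at $\pm\infty$: choosing $R$ with $|m_1|>1/2$ on $\{|x|\ge R\}$, if $\tilde m^n$ had the opposite limit to $m$ at, say, $+\infty$, then by the intermediate value theorem $\tilde m^n_1$ would vanish at some $y_n>R$, forcing $(\tilde m^n_2(y_n))^2+(\tilde m^n_3(y_n))^2=1$, and by uniform closeness $m_2^2(y_n)+m_3^2(y_n)\to 1$, i.e. $m_1(y_n)\to 0$, contradicting $|m_1|>1/2$. Hence $\tilde m^n(\pm\infty)=m(\pm\infty)$ and Part 1 applies. The $L^2$-smallness of $\tilde m^n_1-m_1$ then splits as follows: on $\{|x|\ge R\}$, one has $|\tilde m^n_1+m_1|\ge 1$ and the Part 1 product identity (whose right-hand side is $L^2$-small because $\tilde m^n_{2,3}-m_{2,3}\to 0$ in $L^2$) gives the bound; on $[-R,R]$, the $L^\infty_\loc$-convergence suffices. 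Combined with $\partial_x\tilde m^n\to\partial_x m$ in $L^2$ and the already established $L^2$-smallness of components $2,3$, this yields $\|\tilde m^n-m\|_{H^1}\to 0$, the sought contradiction.
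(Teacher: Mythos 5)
Your proof is correct, and the structure of both parts closely parallels the paper's argument: in Part 1 both you and the authors exploit the $\mathbb S^2$-constraint identity $m_1^2 - \tilde m_1^2 = -(m_2^2 + m_3^2) + (\tilde m_2^2 + \tilde m_3^2)$, the $\{|x|\ge R\}$ / $[-R,R]$ splitting with $|m_1+\tilde m_1|\ge 1$ on the tails; in Part 2 both exploit the non-constancy of $m_1$ to rule out the limit constant $c\ne 0$, and both invoke an intermediate value argument to control behaviour on the tails. The one genuine organizational difference is in Part 2: the paper first upgrades $\|u^n\|_{\q H^1}\to 0$ to a \emph{global} $L^\infty$ smallness $\|u^n\|_{L^\infty}\to 0$ (via a two-step claim using $x_n\to\pm\infty$) and then squares the constraint $2m_1 u_1 = -u_1^2 + v$ to get $u_1^2 \le 4v^2$ on the tails; you instead stop at $L^\infty_{\loc}$ convergence, directly match the limits at $\pm\infty$ via IVT, and then reuse the Part 1 product identity to get the tail $L^2$ bound. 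Both routes are valid; yours has the small advantage of re-using Part 1's mechanism rather than a fresh algebraic squaring. Two very minor remarks: the framing as a proof by contradiction is unnecessary since you never use the hypothesis $\|\tilde m^n - m\|_{H^1}\ge\e$, and the claimed bound $|\tilde m^n_1+m_1|\ge 1$ on $\{|x|\ge R\}$ should really be a fixed positive lower bound such as $\ge 3/4$ (from $|m_1|>1/2$, $|\tilde m^n_1|>1/4$ for $n$ large, and the sign-matching you implicitly establish); this does not affect the argument.
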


\begin{nb} \label{nb:m1_constant} 
The case of a constant component $m_1$ is different. For example, observe that if $m=e_1$, then $\tilde m=-e_1$ has the property that $\|m-\tilde m\|_{\q H^1}=0$, but $m-\tilde m\notin L^2$. 
\end{nb}

\begin{proof} We divide the proof in several steps:

\nd {\it Step 1. We prove 1).} Consider $u = m - \tilde m$. Since $m, \tilde m\in \q H^1(\m R, \m S^2)$, then $\| u \|_{\q H^1} < +\infty$, so that $u_2,u_3 \in H^1$ and $\partial_x u_1 \in L^2$. It remains to prove that $u_1\in L^2$. As $m(\pm \infty) = \tilde m(\pm \infty)$, we deduce that $u_1=m_1-\tilde m_1$ (which is continuous) tends to 0 at $\pm \infty$. By Lemma \ref{lem:Hs_H1}, we have 
$(m_2,m_3), (\tilde m_2, \tilde m_3)\to 0$ as $x\to \pm \infty$; combined with $m(\pm \infty) = \tilde m(\pm \infty)$, we deduce $\lim_{x\to \pm \infty} |m_1 + \tilde m_1|=2|m(\pm \infty)|=2$. 
Thus, there exists $R>0$ such that 
\be
\label{56} |m_1(x) - \tilde m_1(x)| \le 1/2 \quad \text{and} \quad |m_1(x) + \tilde m_1(x) | \ge 1  \quad \textrm{ if } \, |x| \ge R. \ee
We estimate the $L^2$ norm of $u_1$ in the set $\big\{|x| \ge R\big \}$ and then in the set  $\big\{|x| \le R\big \}$. \eqref{56} implies 
\[
\forall |x| \ge R, \quad u_1(x)^2  \le \frac{1}{2} |m_1(x) - \tilde m_1(x) | 
\le \frac{1}{2} |m_1(x)^2 - \tilde m_1(x)^2|,
\]
yielding
 \begin{align*} 
 \int_{\m R \setminus [-R,R]} u_1(x)^2 dx &\le \frac{1}{2}  \int_{\m R \setminus [-R,R]} |m_1(x)^2 - \tilde m_1(x)^2|  dx \le \frac 12 \int_{\R} |m_2^2 +m_3^2 -  \tilde m_2^2 -\tilde m_3^2|\, dx < \infty 
 \end{align*}
 because $m, \tilde m$ are $\m S^2$-valued maps and $(m_2,m_3), (\tilde m_2, \tilde m_3)\in L^2$. 
 The estimate of the $L^2$ norm of $u_1$ in the set $\{|x| \le R \}$ is easy:
 as $|u_1| \le |m_1| + |\tilde m_1| \le 2$, it follows
\[ \int_{-R}^R u_1(x)^2 dx \le 8 R < +\infty. \]
Hence $u_1 \in L^2(\m R)$. 

\medskip

\nd {\it Step 2. We prove 2).} First in the following claim, we prove smallness of the $L^\infty$ norm of $u = m - \tilde m$ provided that $\| m - \tilde m \|_{\q H^1}$ is small.

\begin{claim} Let $m\in \q H^1(\R, \m S^2)$ with $m_1$ non-constant and
let $u^n$ be a sequence such that $m + u^n: \m R \to \m S^2$ and $\| u^n \|_{\q H^1} \to 0$ as $n\to \infty$. Then $\| u^n \|_{L^\infty} \to 0$ as $n\to \infty$.
\end{claim}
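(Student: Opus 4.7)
My plan is to argue by contradiction and exploit the pointwise constraint $|m + u^n|^2 = |m|^2 = 1$ together with the one-dimensional Sobolev embedding $H^1(\m R) \hookrightarrow L^\infty(\m R)$ and the $L^2$ control on $\partial_x u^n$. Since $\|u^n_2\|_{L^2} + \|u^n_3\|_{L^2} + \|\partial_x u^n\|_{L^2} \le \|u^n\|_{\q H^1} \to 0$, that embedding immediately yields $\|u^n_2\|_{L^\infty} + \|u^n_3\|_{L^\infty} \to 0$, so the entire difficulty is concentrated in the first coordinate $u^n_1$. Expanding $|m + u^n|^2 - |m|^2 = 0$ produces the key algebraic identity
\[
 u^n_1(u^n_1 + 2 m_1) = -2(m_2 u^n_2 + m_3 u^n_3) - (u^n_2)^2 - (u^n_3)^2,
\]
whose right-hand side tends to $0$ in $L^\infty$. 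Thus $u^n_1$ is forced pointwise to lie close to one of the two roots $0$ or $-2 m_1$ of the quadratic $t(t + 2 m_1) = 0$.

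The next step is to establish local uniform convergence $u^n_1 \to 0$. Since $|u^n_1| \le |u^n| \le 2$ (from $|u^n|^2 = -2 m \cdot u^n$) and $|u^n_1(x) - u^n_1(y)| \le \|\partial_x u^n_1\|_{L^2}\, |x-y|^{1/2} = o(1) |x-y|^{1/2}$ by Cauchy-Schwarz, the family $(u^n_1)_n$ is uniformly bounded and equicontinuous with vanishing modulus. Ascoli--Arzel\`a yields, up to a subsequence, a locally uniform limit $u^\infty_1$, which is continuous and constant because its distributional derivative coincides with the $L^2$-limit $\lim \partial_x u^n_1 = 0$. Passing to the pointwise limit in the algebraic identity gives $u^\infty_1(x)(u^\infty_1(x) + 2 m_1(x)) = 0$ for every $x$; writing $u^\infty_1 \equiv c$, we conclude that either $c = 0$ or $m_1 \equiv -c/2$, and the latter is ruled out by the hypothesis that $m_1$ is non-constant. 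A standard subsequence argument then propagates $c = 0$ to the full sequence, giving $u^n_1 \to 0$ uniformly on every compact set.

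The main obstacle I expect is to upgrade this local uniform convergence to full $L^\infty$ convergence, i.e.\ to rule out ``escape to infinity''. Suppose for contradiction that there exist $\delta > 0$ and $x_n$ with $|u^n_1(x_n)| \ge \delta$; the previous step forces $|x_n| \to \infty$, and without loss of generality $x_n \to +\infty$. By Lemma~\ref{lem:Hs_H1}, $m_1(x_n) \to a_+ \in \{\pm 1\}$, and evaluating the constraint at $x_n$ forces $u^n_1(x_n) \to -2 a_+$ (the nonzero root of the limit quadratic). Choosing $T$ large enough that $|m_1(x) - a_+| < 1/4$ for every $x \ge T$, the local uniform convergence gives $u^n_1(T) \to 0$ while $u^n_1(x_n) \to -2 a_+$, so continuity of $u^n_1$ and the intermediate value theorem produce, for $n$ large, points $y_n \in [T, x_n]$ with $u^n_1(y_n) = -a_+$. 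Re-inserting this value into the constraint at $y_n$ yields $(-a_+)(-a_+ + 2 m_1(y_n)) \to 0$, i.e.\ $m_1(y_n) \to a_+/2$, which contradicts $|m_1(y_n) - a_+| < 1/4$ because $|a_+/2 - a_+| = 1/2$. This closes the argument.
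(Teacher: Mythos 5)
Your proof is correct and follows essentially the same route as the paper's: pointwise constraint $u^n_1(u^n_1 + 2m_1)\to 0$ in $L^\infty$, local uniform convergence of $u^n_1$ to a constant (via equicontinuity from $\partial_x u^n_1\to 0$ in $L^2$), identification of the constant as $0$ using non-constancy of $m_1$, and a contradiction argument via the intermediate value theorem to rule out escape of the sup to infinity. The only cosmetic differences are that you invoke Arzel\`a--Ascoli where the paper applies Cauchy--Schwarz directly, and in the last step you first identify the limit $-2a_+$ and pick $y_n$ with $u^n_1(y_n)=-a_+$, whereas the paper picks $y_n$ with $|u^n_1(y_n)|=\delta$ and derives a quantitative lower bound on the constraint; both close the contradiction the same way.
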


\begin{proof}[Proof of Claim]
The assumption $\| u^n \|_{\q H^1} \to 0$ as $n\to \infty$ implies $\| u^n_2 \|_{H^1}, \| u^n_3 \|_{H^1} \to 0$; thus, the embedding $H^1\hookrightarrow L^\infty$ yields $u^n_2$, $u^n_3 \to 0$ in $L^\infty$ as $n\to \infty$.  It remains to show that $u_1^n \to 0$ in $L^\infty$. For this, observe that the constraints $m, m + u^n \in \m S^2$ yield
\[ 2m \cdot u^n + |u^n|^2 =0 \quad \textrm{in} \quad \R. \]
Combined with $u^n_2,u^n_3 \to 0$ in $L^\infty$ as $n\to \infty$, since $|m|=1$ in $\R$, we deduce
\be
\label{eq:m.v_n}
 2m_1 u^n_1 + (u^n_1)^2 \to 0 \quad \text{in } L^\infty\quad \textrm{as $n\to \infty$}. \ee
Recall that  $\| u^n_1 \|_{L^\infty}\le \| u^n \|_{L^\infty} \le 2$. Up to a subsequence, we can assume that $u^n_1(0)\to c$ for some limit 
$c\in [-2,2]$. 

\smallskip

\emph{Step 1. We claim that $c=0$ and for any $R >0$, $u_1^n\to 0$ in $L^\infty([-R,R])$.} Indeed, it suffices to consider large $R$, and as $m_1$ is not constant in $\R$,  we can assume that $m_1$ is not constant in $[-R,R]$. For such $R$ and any $x \in [-R,R]$,
\[ |u^n_1(x)-c|\le |u^n_1(0)-c|+\sqrt{R}\|\partial_x u^n_1\|_{L^2(-R,R)}. \]
As $\partial_x u^n_1 \to 0$ in $L^2$, we deduce that $u_1^n\to c$ in $L^\infty([-R,R])$.
Passing to the limit in \eqref{eq:m.v_n}, it yields $2 m_1 c + c^2 =0$ in $[-R,R]$. If $c\neq 0$, then we would obtain  $m_1 = -c/2$ is constant  in $[-R,R]$ which contradicts our assumption. Thus, $c=0$, i.e., $u_1^n\to 0$ in $L^\infty([-R,R])$ as $n\to \infty$. 

\smallskip

\emph{Step 2. Convergence on all $\m R$ of $u^n_1 \to 0$ in $L^\infty$.} 

Assume for the sake of contradiction that there is $\delta \in (0, \frac12)$, a sequence of points $x_n \in \m R$ and a subsequence still denoted $u^n$ for simplicity of notations, such that $|u^n_1(x_n)| \ge \delta$ for every $n$.

Up to choosing a further subsequence, we can also assume $x_n\to x_\infty$ with $x_\infty \in [-\infty, \infty]$. First observe that $x_\infty \in \{\pm \infty \}$. Indeed,
if $x_\infty\in \R$, then choosing $R>0$ sufficiently large, we would have that $x_\infty \in (-R,R)$ and $u_1^n\to 0$ in $L^\infty([-R,R])$ which contradicts the fact that $|u^n_1(x_n)| \ge \delta>0$ and $x_n\to x_\infty \in (-R,R)$. Therefore, $x_0=\pm\infty$: we will assume $x_0=+\infty$ (the other case follows similarly). 

By Lemma \ref{lem:infinit}, we know that $m_1$ has limits belonging to $\{\pm 1\}$ as $x\to \pm \infty$; fix a large $R$, such that $|m_1|\ge \frac12$ outside $(-R,R)$. As  $x_n \to +\infty$ and $u_1^n\to 0$ in $L^\infty([-R,R])$, there exists $N>0$ such that for every $n\ge N$, $x_n \ge 2R$ and $|u^n_1|\le \frac\delta2$ in $[-R,R]$. As $|u^n_1(x_n)| \ge \delta$ and $x_n\ge 2R$, by continuity of $u^n_1$, there exists $y_n> R$ such that 
$|u^n_1(y_n)|=\delta$ for every $n\ge N$. Using $|m_1(y_n)|\ge \frac 12$ (because $y_n\notin [-R,R]$), we deduce that for $n\ge N$:
\[ |2m_1(y_n) u^n_1(y_n) + (u^n_1)^2(y_n)| \ge 2 |m_1(y_n)| \delta-\delta^2\ge \delta(1-\delta)\ge \frac\delta 2. \]
This contradicts \eqref{eq:m.v_n}. 

This argument shows that every subsequence of $u^n_1$ has a further subsequence converging to the unique limit $0$ in $L^\infty$; as $L^\infty$ is a Hausdorff space, we conclude that $u^n_1 \to 0$ in $L^\infty$.
\end{proof}

\medskip

\nd {\it End of proof of point 2)}. We now prove smallness of the $H^1$ norm of $u$ as stated in \eqref{H_and_H1}. 
By Lemma \ref{lem:infinit}, there exists $R \ge 1$ such that $|m_1| \ge 1/2$ for every $|x| \ge R$. 
Let $\e\in (0,1)$. The above claim implies the existence of $\delta=\delta(m) >0$ such that if $m+u: \m R \to \m S^2$ and $\| u \|_{\q H^1} \le \delta$, then $\| u \|_{L^\infty} \le \frac\e{2\sqrt{R}}<\frac12$. As $m+u$ and $m$ are $\m S^2$-valued maps, we have
\[ 2m_1 u_1 =- u_1^2 + v, \quad v: = -2(m_2u_2 + m_3 u_3) - u_2^2 - u_3^2. \]
Squaring the above equality, as $|m_1| \ge 1/2$ outside $(-R,R)$ and $|u_1|\le |u| \le \frac12$, we get 
\[ u_1^2 \le (2m_1 u_1)^2 \le 2 (u_1^4 + v^2) \le \frac{1}{2} u_1^2 + 2 v^2, \quad \textrm{for } |x| \ge R\]
and so $u_1^2 \le 4 v^2$ outside $(-R,R)$. We now integrate on $|x| \ge R$; as $|m|=1$ and $|u|^2\le |u|\le \frac12$ in $\R$ yielding $|v|\le 3(|u_2|+|u_3|)$ in $\R$, we obtain for some universal constant $C>1$:
\[ \int_{|x| \ge R} u_1^2 \le 4 \| v \|_{L^2}^2 \lesssim \| (u_2,u_3) \|_{L^2}^2 \le (C-1) \| u \|_{\q H^1}^2. \]
Combined with
\[ \int_{-R}^R u_1^2 \, dx \le 2 R \| u_1 \|_{L^\infty}^2 \le \frac{\e^2}2, \]
we get
\[ \| u \|_{H^1}^2 = \| u_1 \|_{L^2}^2 + \| u \|_{\dot H^1}^2 \le \frac{\e^2}2 + C \| u \|_{\q H^1}^2 \le \frac{\e^2}2 + C \delta^2. \]
Up to lowering $\delta>0$ further, we can also assume that $C \delta^2 \le \e^2/2$, and so $\| u \|_{H^1} \le \e$.
\end{proof}

As mentioned in Remark \ref{nb:m1_constant}, the case of a constant first  coordinate should be considered separately, and for this, we have to take into account the following (mirror) symmetry in the first component}
\be
\label{mirror}
m^c := (-m_1,m_2,m_3).
\ee
The following result holds for continuous maps $m$ not necessarily belonging to $\q H^1$.  

\begin{lem} \label{lem:H_H1_1}
Let $m=(m_1, m_2, m_3): \m R \to \m S^2$ be a continuous map with $m_1=c$ constant in $\R$. 

1) If $c\ne 0$, then there exist  $\delta,C>0$ (depending on\footnote{One can choose $\delta = O(c^2)$ and $C = O(1/c)$ as $c\to 0$.} $c$) such that if $\tilde m: \m R \to  \m S^2$ satisfies $\| m - \tilde m \|_{\q H^1} \le \delta$, then 
\[  \min( \| m - \tilde m  \|_{H^1}, \| m^c - \tilde m \|_{H^1}) \le C \| m - \tilde m \|_{\q H^1}. \]

2) If $c=0$, then there exists a universal $C>0$ such that for every $\tilde m: \m R \to  \m S^2$,
\[ \| m - \tilde m \|_{H^1} \le C(\| m - \tilde m \|_{\q H^1}+\| (m - \tilde m)' \|^{1/2}_{L^1}), \]
where we recall the notation $(m - \tilde m)'=(m_2 - \tilde m_2, m_3 - \tilde m_3)$.
\end{lem}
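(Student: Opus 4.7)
The approach for both parts starts from the common algebraic observation that, since $m,\tilde m$ both take values into $\mathbb{S}^2$ and $m_1 = c$, one has the pointwise identity
\[
\tilde m_1^2 - c^2 \;=\; (m_2 - \tilde m_2)(m_2 + \tilde m_2) + (m_3 - \tilde m_3)(m_3 + \tilde m_3).
\]
Since $|m_k + \tilde m_k| \le 2$ for $k=2,3$, this yields simultaneously an $L^2$ bound
$\| \tilde m_1^2 - c^2 \|_{L^2} \le 2 \|m - \tilde m\|_{\mathcal{H}^1}$ and an $L^1$ bound $\| \tilde m_1^2 - c^2 \|_{L^1} \le 2 \|(m - \tilde m)'\|_{L^1}$. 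In addition, since $\partial_x m_1 = 0$, we have $\|\partial_x(m - \tilde m)\|_{L^2} = \|\partial_x(m^c - \tilde m)\|_{L^2} \le \|m - \tilde m\|_{\mathcal{H}^1}$, so in both parts the $\dot H^1$ part of the $H^1$ norm is already controlled for free, and it only remains to bound $\|m_1 - \tilde m_1\|_{L^2}$ (respectively $\|m_1^c - \tilde m_1\|_{L^2}$).

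For part 2), with $c = 0$, the $L^1$ bound gives directly $\|\tilde m_1\|_{L^2}^2 = \|\tilde m_1^2\|_{L^1} \le 2 \|(m-\tilde m)'\|_{L^1}$, hence $\|m_1 - \tilde m_1\|_{L^2} \le \sqrt{2}\,\|(m-\tilde m)'\|_{L^1}^{1/2}$, and combining with the $L^2$ control on $m_k - \tilde m_k$ for $k=2,3$ (from $\mathcal{H}^1$) yields the claim.

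For part 1), with WLOG $c > 0$, I factor $\tilde m_1^2 - c^2 = (\tilde m_1 - c)(\tilde m_1 + c)$ and use the following dichotomy. First, I claim that for $\delta$ small enough (depending on $c$), the continuous function $\tilde m_1$ cannot take values in both $(c/2,\infty)$ and $(-\infty,-c/2)$: otherwise, by the intermediate value theorem there would exist an interval $[a,b]$ on which $\tilde m_1$ transits from $c/2$ to $-c/2$ while staying in $[-c/2, c/2]$. Since $\partial_x \tilde m_1 = \partial_x(\tilde m_1 - m_1)$ has $L^2$ norm at most $\delta := \|m - \tilde m\|_{\mathcal{H}^1}$, the Cauchy--Schwarz inequality forces $|b-a| \ge c^2/\delta^2$; but on this interval $|\tilde m_1^2 - c^2| \ge 3c^2/4$, contradicting $\|\tilde m_1^2 - c^2\|_{L^2}^2 \le 4\delta^2$ for $\delta$ sufficiently small in terms of $c$ (explicitly, this forces $\delta \gtrsim c^{3/2}$).

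Having ruled out the sign-flip, either $\tilde m_1 \ge -c/2$ everywhere, in which case $\tilde m_1 + c \ge c/2$ and
\[
|\tilde m_1 - c| \;=\; \frac{|\tilde m_1^2 - c^2|}{\tilde m_1 + c} \;\le\; \frac{2}{c}|\tilde m_1^2 - c^2|,
\]
giving $\|m_1 - \tilde m_1\|_{L^2} \le (4/c)\|m - \tilde m\|_{\mathcal{H}^1}$; or else $\tilde m_1 \le c/2$ everywhere, in which case the symmetric argument (with $c$ replaced by $-c$) gives $\|m_1^c - \tilde m_1\|_{L^2} \le (4/c)\|m - \tilde m\|_{\mathcal{H}^1}$. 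In either case one of $\|m - \tilde m\|_{H^1}$ or $\|m^c - \tilde m\|_{H^1}$ is bounded by $C(c)\|m - \tilde m\|_{\mathcal{H}^1}$, and the main obstacle \emph{was} precisely the sign dichotomy of $\tilde m_1$ (which is also the reason the statement allows for either $m$ or $m^c$), handled by the quantitative transit estimate above.
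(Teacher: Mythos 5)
Your proof is correct, and it shares the same backbone as the paper's: both exploit the constraint $|m|=|\tilde m|=1$ together with $m_1=c$ to derive the pointwise identity for $\tilde m_1^2 - c^2$. In the paper this is phrased via $u=\tilde m - m$ and $v:=2cu_1+u_1^2=-2(m_2u_2+m_3u_3)-u_2^2-u_3^2$, which is exactly your quantity $\tilde m_1^2-c^2$ modulo expansion; part 2) is then essentially identical. Where you genuinely diverge is in part 1): the paper establishes the sign dichotomy by bounding $\|v\|_{L^\infty}$ via the Sobolev embedding $H^1\hookrightarrow L^\infty$ (using $\partial_x v = 2c\,\partial_x u_1 + 2u_1\partial_x u_1\in L^2$), so that for $\delta\lesssim c^2$ one has $c^2+v\ge c^2/2>0$ pointwise, and then the two branches $u_1\in\{-c\pm\sqrt{c^2+v}\}$ are uniformly separated, so continuity of $u_1$ fixes a single sign $\sigma$; your argument instead rules out a sign flip directly by showing that a transit of $\tilde m_1$ across $[-c/2,c/2]$ would, via Cauchy--Schwarz on the $L^2$-small $\partial_x\tilde m_1$, force a long interval ($\ge c^2/\delta^2$) on which $|\tilde m_1^2-c^2|\ge 3c^2/4$, contradicting the $L^2$-smallness of $\tilde m_1^2-c^2$. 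Your version avoids the Sobolev embedding entirely and yields the slightly weaker requirement $\delta\lesssim c^{3/2}$ instead of $\delta\lesssim c^2$, while the ensuing subcase estimates ($\tilde m_1+c\ge c/2$ or $c-\tilde m_1\ge c/2$, hence a clean division by $c$) coincide with the paper's. Both routes are elementary; the paper's is arguably more streamlined algebraically once the constant sign $\sigma$ is in hand, whereas yours is more directly quantitative.
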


\begin{proof}
Denote $u:= \tilde m-m$. As in the proof of Proposition \ref{prop:conv_H_H1}, since $m$ and $\tilde m$ are $\m S^2$-valued, we get
\[ -c^2\le 2c u_1 + u_1^2 = v, \quad v: = -2 (m_2u_2 + m_3 u_3) - u_2^2 - u_3^2, \quad \textrm{in } \R\]
so that
\[ u_1 \in \bigg\{ - c \pm \sqrt{c^2 + v} \bigg\} \quad \textrm{in } \R.\]

\nd {\it Case 1.  $c=0$}. In this case, $|(m_2,m_3)|=1$ (note that $m\notin \q H^1$) and $|u_1| = \sqrt{v}$; then there is a constant $C>0$ (independent of $m$) such that 
\[ \|u_1\|^2_{L^2}=\|v\|_{L^1}\le C( \| (u_2,u_3) \|^2_{L^2}+ \| (u_2,u_3) \|_{L^1}), \]
which leads to conclusion 2).

\medskip

\nd {\it Case 2.  $c\neq 0$}.
As $|m|=1$ and $|u|^2\le 2|u|\le 4$ in $\R$, we obtain that 
\be
\label{ine34}
\| v \|_{L^2} \lesssim \| (u_2,u_3) \|_{L^2} \le M \| u \|_{\q H^1}\ee
for some universal constant $M>0$ (not depending on $m$).
Also, as $\partial_x v=2c\partial_x u_1+ 2u_1 \partial_x u_1$ and $|u_1|\le 2$, we deduce that
$\| v \|_{\dot H^1}\lesssim \| u \|_{\q H^1}$. By the Sobolev embedding $H^1\hookrightarrow L^\infty$, up to possibly increase the above constant $M$, it follows 
\[ \| v \|_{L^\infty}\le M \| u \|_{\q H^1}. \]
Choosing $\delta \le c^2/(2M)$, if $\| u \|_{\q H^1} \le \delta$, then the above estimate yields $\| v \|_{L^\infty} \le c^2/2$, and in particular, $v+c^2\ge c^2/2>0$ in $\R$. 
As $u_1$ is a continuous function belonging to $ \bigg\{ - c \pm \sqrt{c^2 + v} \bigg\}$, then there exists a sign $\sigma \in \{ \pm 1 \}$ such that 
\[ u_1 = -c + \sigma \sqrt{c^2 +v} = |c| (\sigma - \sgn(c)) - \sigma |c| \left(1 - \sqrt{1+\frac v{c^2}} \right) \, \, \textrm{ in } \R. \]
Note that
\[ \left| 1 - \sqrt{1+\frac v{c^2}}  \right| \le \frac{|v|}{c^2} \, \, \textrm{ in } \R.\]

\nd {\it Subcase i).  $\sigma = \sgn(c)$}. In this case, $ |u_1| \le \left|\frac v c\right|$ in $\R$ and we get by \eqref{ine34}:
\[\|u_1 \|_{L^2} \le \frac{M}{c} \| u \|_{\q H^1}, \]
and so
\[  \| m-\tilde m \|_{H^1} = \| u \|_{H^1} \le \left( 1+ \frac{M}{c} \right)  \| u \|_{\q H^1}. \]

\nd {\it Subcase ii).  $\sigma = -\sgn(c)$}. In this case,
\[ |u_1 +2c | \le \left|\frac v c\right|, \]
and similarly, we conclude
\[  \| m^c-\tilde m \|_{H^1} =  \| (2c+u_1,u_2,u_3) \|_{H^1} \le \left( 1+ \frac{M}{c} \right)  \| u \|_{\q H^1}. \qedhere \]
\end{proof}

We go back to magnetisations $m$ with non constant first component. Actually, the smallness \eqref{H_and_H1} in Proposition \ref{prop:conv_H_H1} can be quantified to a linear bound under an additional nondegeneracy assumption on $m$ 
\begin{lem} \label{lem:H_H1_2}
Let $m\in \q H^1(\m R, \m S^2)$ be such that $m_1$ is non-constant and satisfies the nondegeneracy condition for some $c>0$:
\[ |m_1| + |\partial_x m_1| \ge c, \quad \textrm{a.e. in } \, \R.\]
Then there exist $\delta,C>0$ (depending on $m$\footnote{Tracking the constants in the proof shows that one can choose $\delta = \delta(c)$ given by Proposition \ref{prop:conv_H_H1} and $C= O(1+ \| m \|_{\q H^1}/c)$.}) such that if $\tilde m: \m R \to \m S^2$ satisfies $\| m - \tilde m \|_{\q H^1} \le \delta$, then 
\[ \| m - \tilde m \|_{H^1} \le C \| m - \tilde m \|_{\q H^1}. \]
\end{lem}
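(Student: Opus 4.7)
Setting $u := \tilde m - m$, the bound $\|u\|_{H^1} \le \|u\|_{\q H^1} + \|u_1\|_{L^2}$ reduces the task to controlling $\|u_1\|_{L^2}$ linearly by $\|u\|_{\q H^1}$. Expanding $|m|^2 = |\tilde m|^2 = 1$ yields the pointwise identity
\[ u_1 (m_1 + \tilde m_1) = -w, \qquad w := u_2(m_2 + \tilde m_2) + u_3(m_3 + \tilde m_3), \]
from which I read off $\|w\|_{L^2} \le 2\|u\|_{\q H^1}$ (using $|m_i|, |\tilde m_i| \le 1$). Since $H^1(\m R)$ is a Banach algebra embedded in $L^\infty$, the same product structure gives $\|w\|_{L^\infty} \lesssim \|w\|_{H^1} \le C(\|m\|_{\q H^1}) \|u\|_{\q H^1}$, after using $\|\tilde m\|_{\q H^1} \le \|m\|_{\q H^1} + \|u\|_{\q H^1}$ and absorbing $\|u\|_{\q H^1} \le \delta \le 1$. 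By Proposition \ref{prop:conv_H_H1}, I can then fix $\delta = \delta(m) > 0$ small enough so that $\|u\|_{\q H^1} \le \delta$ forces $\|u_1\|_{L^\infty} \le c/4$.

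The next step is to split the line using the closed set $A := \{x \in \m R : |m_1(x)| \ge c/2\}$. On $A$, the above $L^\infty$ bound gives $|m_1 + \tilde m_1| \ge 2|m_1| - |u_1| \ge 3c/4$, and the identity yields the pointwise bound $|u_1| \le \tfrac{4}{3c}|w|$, whence $\|u_1\|_{L^2(A)} \le \tfrac{8}{3c}\|u\|_{\q H^1}$. On the complement $A^c = \{|m_1| < c/2\}$, the nondegeneracy hypothesis gives $|\partial_x m_1| \ge c/2$ almost everywhere, so that a Chebyshev-type estimate yields the measure bound $|A^c| \le 4\|\partial_x m_1\|_{L^2}^2/c^2 \le 4\|m\|_{\q H^1}^2/c^2$. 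Moreover, $A^c$ is bounded in $\m R$, because $m_1(\pm\infty) = \pm 1$ (Lemma \ref{lem:Hs_H1}) forces both tails of the line to lie in $A$.

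The main obstacle will be to control $u_1$ on $A^c$: there $m_1 + \tilde m_1$ can vanish and the algebraic identity becomes useless. My plan is to derive an $L^\infty$ bound on $u_1$ over $A^c$ by a short connectedness argument. Every connected component $I = (a, b)$ of $A^c$ has finite endpoints $a, b \in A$ (by boundedness of $A^c$ and closedness of $A$), so for any $x \in I$,
\[ |u_1(x)| \le |u_1(a)| + |I|^{1/2} \|\partial_x u_1\|_{L^2(I)} \le \tfrac{4}{3c}\|w\|_{L^\infty} + |A^c|^{1/2} \|u\|_{\q H^1} \lesssim \tfrac{1 + \|m\|_{\q H^1}}{c}\|u\|_{\q H^1}. \]
Then $\|u_1\|_{L^2(A^c)} \le |A^c|^{1/2} \|u_1\|_{L^\infty(A^c)} \lesssim \tfrac{\|m\|_{\q H^1}(1+\|m\|_{\q H^1})}{c^2}\|u\|_{\q H^1}$, and combining with the estimate on $A$ produces the desired linear bound $\|u\|_{H^1} \le C\|u\|_{\q H^1}$ with $C$ depending only on $m$ and $c$.
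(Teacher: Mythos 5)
Your proof is correct and follows a genuinely different route from the paper's. The paper works with the one-sided identity $-m_1 u_1 = \tfrac12|u|^2 + m_2 u_2 + m_3 u_3$ (from $m\cdot u = -\tfrac12|u|^2$), bounds $\|m_1 u_1\|_{L^2}$ via Gagliardo--Nirenberg and H\"older inequalities, plugs into $c\int u_1^2 \le \int(|m_1| + |\partial_x m_1|)u_1^2$, and finally solves a quadratic inequality in $s = \|u_1\|_{L^2}^{1/2}$. You instead work with the symmetric identity $(m_1 + \tilde m_1)u_1 = -w$ (coming from $(m + \tilde m)\cdot u = 0$), split the line into $A = \{|m_1|\ge c/2\}$ and its complement, invert the nondegenerate factor on $A$, and on $A^c$ combine a Chebyshev measure bound with an $L^\infty$ estimate obtained by integrating $\partial_x u_1$ from the $A$-valued endpoints of each connected component. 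Your route is pleasantly elementary, replacing the Gagliardo--Nirenberg and quadratic-inequality manipulations by a domain-splitting and connectedness argument; the trade-off is a slightly worse constant, $O(1+\|m\|_{\q H^1}^2/c^2)$ rather than the $O(1+\|m\|_{\q H^1}/c)$ the paper tracks in its footnote, which is immaterial for the statement. One minor imprecision to clean up: you write $m_1(\pm\infty)=\pm 1$, but Lemma~\ref{lem:Hs_H1} only gives $m_1(\pm\infty)\in\{\pm 1\}$; what you actually need (and do have) for the topological boundedness of $A^c$ is $|m_1|\to 1$ at $\pm\infty$, together with the observation that the Chebyshev bound already forces $c<2$ so that $c/2<1$ and the tails indeed lie in $A$.
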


We emphasise that the nondegeneracy assumption is verified for $w_*$\footnote{By \eqref{formul1}, we have 
$|(w_*)_1|+|\partial_x (w_*)_1|\ge \cos^2 \theta_*+\sqrt{1-\gamma^2} \sin^2 \theta_*\ge \sqrt{1-\gamma^2}>0$ in $\R$.}, and this is our purpose in Theorem \ref{th:stab}.

\begin{proof}
Denote $u = \tilde m - m$ as before. If $u_1=0$, then we are done. Otherwise, we may assume that $\|u_1\|_{L^2}>0$. In the following, $M>0$ is a universal constant that can change from line to line. We fix $\e>0$ that is given later (it will be defined below on \eqref{choice_ep}). By 
Proposition \ref{prop:conv_H_H1}, there exists $\delta\in (0,1)$ so that $\|u\|_{\q H^1}\le \delta$ implies $\| u \|_{H^1} \le \e$.
By the constraint $|m|=|\tilde m|=1$, it follows
\[-m_1 u_1=\frac12 |u|^2 +m_2 u_2+m_3 u_3 \quad \textrm{ in } \, \R\]
yielding
\[ \|m_1 u_1\|_{L^2} \le \frac{1}{2} (\|u_1\|_{L^4}^2 + \|u_2\|_{L^4}^2+\|u_3\|_{L^4}^2)+ \|m_2u_2\|_{L^2} + 
\|m_3u_3\|_{L^2}. \]
Using the Gagliardo-Nirenberg inequality $\|f\|_{L^4}\lesssim \|\partial_x f\|^{1/4}_{L^2} \|f\|^{3/4}_{L^2}$ and the H\"older inequality $\|m_ju_j\|_{L^2}\le \|u_j\|_{L^4}\|m_j\|_{L^4}$ for $ j=2, 3$,  we deduce that
\begin{align*}
\|m_1 u_1\|_{L^2}&\le \frac12\|u_1\|_{L^\infty}\|u_1\|_{L^2}+M(\|m\|_{\q H^1}\|u\|_{\q H^1}+\|u\|^2_{\q H^1})\\
&\le \frac{1}{2} \e \|u_1\|_{L^2}+M(\|m\|_{\q H^1}+1)\|u\|_{\q H^1}, 
\end{align*}
where we used $\|u\|^2_{\q H^1}\le \|u\|_{\q H^1}\le \delta<1$ and the Sobolev embedding $\|u_1\|_{L^\infty} \le \| u \|_{H^1} \le \e$. Combined with the non-degeneracy condition and using again the Gagliardo-Nirenberg and H\"older inequality, we obtain
\begin{align*}
c \int_{\R} u_1^2 \, dx &\le \int_{\R} (|m_1| + |\partial_x m_1|) u_1^2 \, dx\\
& \le \|m_1 u_1\|_{L^2} \|u_1\|_{L^2}+\|\partial_x m_1\|_{L^2} \|u_1\|^2_{L^4}\\
&\le \frac{1}{2} \e \|u_1\|^2_{L^2}+M(\|m\|_{\q H^1}+1)\|u\|_{\q H^1} \|u_1\|_{L^2}\\
&\quad \quad +M \|m\|_{\q H^1}\|u\|^{1/2}_{\q H^1}  \|u_1\|^{3/2}_{L^2}.
\end{align*}
We choose
\be
\label{choice_ep}
\e:= c.
\ee
Taking $s> 0$ given\footnote{Recall that $ \|u_1\|_{L^2}>0$.} by $s^2= \|u_1\|_{L^2}(\le  \|u_1\|_{H^1}\le \e<\infty)$ and denoting $a:=M(\|m\|_{\q H^1}+1)>0$, after dividing by $s^2$, we obtain
\[ \frac c2 s^2-a\|u\|^{1/2}_{\q H^1}s-a\|u\|_{\q H^1}\le 0. \]
The discriminant of the above quadratic form in $s$ is positive of order $O(\|u\|_{\q H^1} (\| m \|_{\q H^1}^2 + c^2))$, so both roots are real numbers of order $\|u\|^{1/2}_{\q H^1}$, and in particular $s=\|u_1\|^{1/2}_{L^2}$ is of order $\|u\|^{1/2}_{\q H^1}$ which yields the conclusion.
\end{proof}

\begin{nb}
If  $\tilde m$ and $m$ are not close in $\q H^1$, then one can not expect any bound of the form 
\[ \| m - \tilde m \|_{H^1} \le F(\| m -\tilde m \|_{\q H^1}) \]
 for any function $F: \m R_+ \to \m R_+$ which takes finite values. To see this, let us give an example of a family of magnetisations $m$, $\tilde m$, such that $\| m -\tilde m \|_{\q H^1}$ remains bounded, but $\| m - \tilde m \|_{H^1}$ is unbounded.  For this, let $\theta$ be a smooth function in $\R$ such that $\theta(x) = 0$ for $x < -1$, $\theta(x) = \pi$ for $x > 1$ and $\theta$ is increasing on $[-1,1]$. Consider $\tilde m = (1,0,0)$ constant and $m$ such that $m_1$ has two transitions from $-1$ to $1$ (given by $\cos \theta$) separated at a distance of order $R>2$: for example, we can choose such $m_1$ to be 
\[ m_1(x) = \begin{cases} 
\cos \theta(x+R) \quad \text{for} \quad x \le 0, \\
\cos \theta(-x+R) \quad \text{for} \quad x \ge 0. 
\end{cases} \]
Let $m_2 = \sqrt{1-m_1^2} \in H^1$ and $m_3 =0$. Then $m \in \q H^1$ and one sees that $\| m - \tilde m \|_{\q H^1}$ is  constant for large $R$, but of course $\| m - \tilde m \|_{L^2}=O(\sqrt{R}) \to +\infty$ as $R \to +\infty$.
\end{nb}

\section{Coercivity of a Schr\"odinger operator} \label{append:c}

\begin{lem}\label{lem:coer00}
Let $L = - \Delta +V$ where $V \in L^\infty(\RR^N)$ has the property that there exist $R>0$ and $c >0$ such that $V(x) \ge c$ for every $|x| \ge R$.
Assume that there exists $\phi \in H^1(\RR^N)$ such that$L \phi = 0$ in the sense of distributions and $\phi >0$ in $\RR^N$. Then $\ker L=\RR \phi$ and there exists $\lambda >0$ (small) such that for every $v \in H^1(\RR^N)$, $0\le (Lv,v)_{H^{-1}, H^1} \le \frac1{\lambda}\|v\|^2_{H^1}$,
\be
\label{est:Lvv_H1}
(Lv,v)_{H^{-1}, H^1} \ge 4 \lambda \| v \|_{H^1}^2 - \frac{1}{\lambda} (v, \phi)^2_{L^2}
\ee
and for every $v \in H^2(\RR^N)$, 
\be
\label{last_one}
\|Lv\|^2_{L^2} \ge 4 \lambda \| v \|_{H^2}^2 - \frac{1}{\lambda} (v, \phi)^2_{L^2}.
\ee
\end{lem}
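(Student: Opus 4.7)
The natural starting point is the standard Agmon--Allegretto--Piepenbrink factorization via the positive ground state $\phi$. Writing $v = \phi u$ and integrating by parts, using $L\phi=0$, I expect to obtain the pointwise identity
\[ (Lv,v)_{H^{-1},H^1} = \int_{\RR^N} \phi^2 |\nabla u|^2\, dx \ge 0, \]
which simultaneously yields non-negativity of $L$ and shows that $(Lv,v)=0$ forces $v = c\phi$; hence $\ker L = \RR\phi$. The upper bound $(Lv,v) \le (1+\|V\|_{L^\infty}) \|v\|_{H^1}^2$ is immediate.

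The heart of the argument is to establish the spectral gap
\[ \mu_2 := \inf\left\{(Lv,v) : v \in H^1(\RR^N),\ \|v\|_{L^2}=1,\ (v,\phi)_{L^2}=0\right\} > 0, \]
which I would prove by contradiction, following a compactness / concentration argument. Take a minimizing sequence $(v_n)$ with $\|v_n\|_{L^2}=1$, $v_n \perp \phi$ and $(Lv_n, v_n) \to 0$. The confining condition $V\ge c$ outside $B_R$ prevents mass from escaping to infinity: from $(Lv_n,v_n) \ge \|\nabla v_n\|_{L^2}^2 + c\int_{|x|\ge R}v_n^2 - \|V\|_{L^\infty}\int_{B_R}v_n^2$ and $\|v_n\|_{L^2}^2=1$, one deduces $\int_{B_R}v_n^2 \ge \varepsilon_0 > 0$. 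Extracting a weak $H^1$-limit $v$ and using Rellich compactness on $B_R$ shows $v \ne 0$. Weak lower semicontinuity of $(Lv,v)$ (splitting $V = V_+ - V_-$ where $V_-$ is compactly supported in $B_R$, using Fatou for the non-negative part $V_+v_n^2$ and strong $L^2_{\mathrm{loc}}$ convergence for $V_-v_n^2$) yields $(Lv,v) \le \liminf (Lv_n,v_n)=0$. By the first step $v \in \ker L = \RR\phi$; but weak $H^1$-convergence preserves the orthogonality $v \perp \phi$, so $v=0$, a contradiction.

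With the gap $\mu_2>0$ in hand, decomposing $v = \alpha\phi+w$ with $\alpha = (v,\phi)/\|\phi\|_{L^2}^2$ and $w \perp \phi$ gives $(Lv,v) = (Lw,w) \ge \mu_2 \|w\|_{L^2}^2$ and hence
\[ (Lv,v) \ge \mu_2 \|v\|_{L^2}^2 - \frac{\mu_2}{\|\phi\|_{L^2}^2}(v,\phi)^2. \]
Combining this with the trivial identity $(Lv,v) = \|\nabla v\|_{L^2}^2 + \int V v^2$ in a convex combination with a small weight $A<\mu_2/\|V\|_{L^\infty}$ will absorb the bad $-\|V\|_{L^\infty}\|v\|_{L^2}^2$ term and produce an $H^1$-coercive bound, i.e.\ \eqref{est:Lvv_H1} for $\lambda$ small enough. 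For \eqref{last_one}, applying the spectral theorem to the non-negative self-adjoint operator $L$ on $\{\phi\}^\perp$ yields $\|Lw\|_{L^2} \ge \mu_2 \|w\|_{L^2}$, so $\|Lv\|_{L^2}^2 \ge \mu_2^2(\|v\|_{L^2}^2 - (v,\phi)^2/\|\phi\|_{L^2}^2)$; combined with the elementary bound $\|\Delta v\|_{L^2} \le \|Lv\|_{L^2}+\|V\|_{L^\infty}\|v\|_{L^2}$ and $\|v\|_{H^2}^2 \lesssim \|v\|_{L^2}^2 + \|\Delta v\|_{L^2}^2$, one gets $\|v\|_{H^2}^2 \le C\|Lv\|_{L^2}^2 + C(v,\phi)^2$, which rearranges to the desired estimate.

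The main obstacle will be the spectral gap step: the potential energy $\int V v_n^2$ is not automatically lower semicontinuous along weak $H^1$-converging sequences because of the negative part of $V$, and the confining assumption $V\ge c$ outside $B_R$ is essential both to confine a nontrivial fraction of $L^2$-mass to $B_R$ (allowing Rellich to produce a nontrivial weak limit) and to ensure that $V_-$ has compact support (so that only the well-behaved $V_+$ part need be handled by Fatou).
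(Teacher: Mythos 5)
Your proposal is correct and shares the paper's overall strategy: Agmon factorization $v=\phi u$ to get non-negativity of the quadratic form and the kernel identification $\ker L = \RR\phi$; a spectral gap via a minimizing sequence; and the parallel/orthogonal decomposition $v = \alpha\phi + w$ combined with a convex-combination trick to upgrade $L^2$-coercivity to $H^1$- and $H^2$-coercivity. The one genuinely different step is the contradiction argument for the spectral gap $\mu_2>0$. The paper reduces to a nonnegative minimizing sequence $v_n \geq 0$ (using $(L|v|,|v|)=(Lv,v)$), observes that the weak limit $\underline v$ is both $\geq 0$ and $L^2$-orthogonal to $\phi>0$ hence zero, and then obtains a contradiction from the confining estimate forcing $\|v_n\|_{H^1}\to 0$ against $\|v_n\|_{L^2}=1$. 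That sign reduction, however, sits badly with the constraint: replacing $v_n$ by $|v_n|$ does not preserve $(v_n,\phi)_{L^2}=0$, and indeed no nontrivial $v_n\ge 0$ can be orthogonal to $\phi>0$, so the paper's minimizing sequence cannot simultaneously enjoy both properties. Your route sidesteps this entirely: the confining estimate pins a definite fraction of $L^2$-mass inside $B_R$, Rellich then produces a nonzero weak limit $v$, weak lower semicontinuity of the energy (via the $V_\pm$ split, with $V_-$ compactly supported) gives $(Lv,v)=0$, hence $v\in\RR\phi$ by the factorization, and orthogonality forces $v=0$, the contradiction. This is the standard direct-method argument; it is marginally longer because it invokes lower semicontinuity of the form, but it avoids any sign manipulation and is therefore the cleaner argument in this constrained setting. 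One minor technical remark: to run the factorization for general $v\in H^1$ rather than $v\in C_c^\infty$, one should pass to the limit by density plus Fatou, which yields the one-sided inequality $(Lv,v)\ge \int \phi^2|\nabla(v/\phi)|^2\ge 0$; this is all you need for both the non-negativity and the kernel characterization, but it is worth stating rather than asserting the pointwise identity outright.
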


\begin{proof} Note that if $\phi\in H^1(\RR^N)$ is a solution of $L\phi=0$, as $V\in L^\infty(\RR^N)$, then $\phi\in H^2(\RR^N)$. Moreover, standard elliptic regularity implies that $\phi\in W^{2,p}_{loc}(\RR^N)$ for every $p<\infty$, in particular, $\phi\in C^1(\RR^N)$. Therefore, the condition $\phi>0$ in $\RR^N$ makes sense pointwise in $\RR^N$. Also, we will assume without loss of generality that $\| \phi \|_{L^2}=1$.

\bigskip
 
\nd \emph{Step 1.} For every $v\in C^\infty_c(\RR^N)$, set $w:=\frac v \phi\in C_c^1(\RR^N)$ with compact support. Integrating by parts, we obtain:
\begin{align*}
\int_{\RR^N} Lv\cdot v \, dx &= \int_{\RR^N} \bigg(|\nabla v|^2 +V(x) v^2\bigg) \, dx=\int_{\RR^N} \bigg(|\nabla(\phi w)|^2 +V(x) \phi^2 w^2\bigg)\, dx\\
&=\int_{\RR^N} \bigg( \phi^2 |\nabla w|^2+w^2 |\nabla\phi|^2 +\frac 1 2  \nabla (\phi^2) \cdot \nabla (w^2) +V(x) \phi^2 w^2\bigg)\, dx\\
&{=} \int_{\RR^N} \bigg(\phi^2 |\nabla w|^2 +w^2 |\nabla\phi|^2 +V(x) \phi^2 w^2\bigg)\, dx -\frac 1 2 (\Delta (\phi^2), w^2) \\ 
&=\int_{\RR^N} \phi^2 | \nabla w|^2 \, dx+ (L\phi, w^2 \phi)=\int_{\RR^N} \phi^2 | \nabla w|^2 \, dx \ge 0,
\end{align*}
because $L\phi=0$ and $\Delta(\phi^2)=2\phi\Delta \phi+2|\nabla \phi|^2\in H^{-1}$ on any open bounded set. 
By density, for every $v\in H^1(\RR^N)$, there exists a sequence $v_n\in C^\infty_c(\RR^N)$ such that $v_n\to v$ and $\nabla v_n\to \nabla v$ in $L^2$ and  a.e. in $\RR^N$. In particular, $\nabla\big(\frac{v_n}{\phi} \big)\to \nabla\big(\frac{v}{\phi} \big)$   a.e. in $\RR^N$. Since $V\in L^\infty(\RR^N)$, it follows by Fatou's lemma:
\[ (Lv,v)=\lim_n (L v_n,v_n)=\liminf_n \int_{\RR^N} \phi^2 \big| \nabla\big(\frac{v_n}{\phi} \big)\big|^2 \, dx\ge \int_{\RR^N} \phi^2 \big| \nabla\big(\frac{v}{\phi} \big)\big|^2 \, dx\ge 0. \]
As a consequence, if $v\in H^1$ belongs to $\ker L$, since $\phi>0$ in $\RR^N$, then $v\in \RR \phi$. Also, as $V\in L^\infty$, we conclude that $0\le (Lv,v) \le \max\big(1, \|V\|_{L^\infty}\big)\|v\|^2_{H^1}$ for every $v\in H^1$.

\bigskip

\nd \emph{Step 2.}  Let 
\[ a = \inf \{ (Lv,v) : v \in H^1, \| v \|_{L^2} =1, (v ,\phi)_{L^2} =0 \}. \]
By Step 1, we know that $a\ge 0$. The aim is to prove that $a > 0$. 
For that, we consider a minimising sequence $(v_n)_n$ in $H^1$. As  for $v \in H^1$,
\[ (L|v|,|v|) = \int_{\RR^N} \bigg(|\nabla v|^2 +V(x) v^2\bigg) \, dx=(Lv,v), \] we can furthermore assume that $v_n \ge 0$ for all $n$. As  for some $C>0$, $V \ge -C$ in $\RR^N$, we infer that for any $v \in H^1$,
\[ \| v \|_{H^1}^2 \le (Lv,v) + (C+1) \|v \|_{L^2}^2, \]
so that $(v_n)_n$ is bounded in $H^1$. Up to extracting a subsequence, we can assume that for some $\underline v \in H^1$, $v_n \weak \underline  v$ in $H^1$,  $v_n \to \underline v$ in $L^2$ locally on compact sets and a.e. in $\RR^N$ and $(Lv_n, v_n)\to a$ as $n\to \infty$. In particular, $\underline v \ge 0$. Now as $(v_n,\phi)_{L^2} =0$, since $v_n \weak \underline  v$ in $L^2$ and $\phi\in L^2$, we deduce $(\underline v,\phi)_{L^2}=0$, and as $\phi >0$ and $\underline  v\ge 0$, we conclude $\underline  v=0$ a.e. in $\RR^N$. 

Now we argue by contradiction and assume that $a=0$, that is $(Lv_n,v_n) \to 0$. As $V+(C+c)  {\m 1_{|x| \le R}} \ge c$, we get from strong $L^2$ convergence in $B(0,R)$ of $v_n$ to $\underline v$, we see that
\begin{align*}
0 \le  \int_{m R} \left( |\nabla v_n|^2 + c |v_n|^2 \right) dx & \le \int_{\m R}  |\nabla v_n|^2 + (  V + (C+c) {\m 1_{|x| \le R}}) |v_n|^2  dx  \\
& \le (Lv_n,v_n) + (C+c) \int_{|x| \le R} |v_n|^2 dx \to 0.
\end{align*}
This implies that $\| v_n \|_{H^1} \to 0$, and the convergence $v_n \to \underline v =0$ is strong in $H^1$. But $\| v_n \|_{L^2} =1$, so this strong convergence also implies $\| \underline v \|_{L^2} =1$, a contradiction. Hence $a >0$.

\bigskip

\nd \emph{Step 3.} We now prove inequality \eqref{est:Lvv_H1}. By Step 2 and homogeneity, we get that if $v \in H^1$ and $(v,\phi)_{L^2}=0$, then
\[ (Lv,v) \ge a \| v \|_{L^2}^2. \]
Then, still assuming that $(v,\phi)=0$, since $V \ge -C$ in $\RR^N$, there holds for $b:=\frac{a}{a+C+1}\in (0,1)$:
\begin{align}
\nonumber (Lv,v) & = b \int_{\RR^N} |\nabla v|^2 + (1-b) (Lv,v) + b \int_{\RR^N} V v^2 \\
\label{n12}& \ge b \int_{\RR^N} |\nabla v|^2 + \int_{\RR^N} v^2 (a(1-b) + b V) \ge b \| v \|_{H^1}^2,
\end{align}
because $a(1-b) + b V \ge a(1-b) -b C = b$.

\bigskip

Let $v \in H^1$ (no longer assuming the orthogonality condition). We define the $L^2$ orthogonal decomposition $w:=v- (v, \phi)_{L^2} \phi$, so that $(w,\phi)_{L^2} =0$. 
As $L\phi=0$ and $L$ is self-adjoint, we compute using \eqref{n12}:
\begin{align*}
(Lv,v) &=(Lw,w)+2(v,\phi) (L\phi, w)+(v,\phi)^2 (L\phi,\phi)\\
& = (Lw,w) \ge b \| w \|_{H^1}^2 = b \| v -   (v, \phi) \phi \|_{H^1}^2 \\
& \ge b (\| v \|_{H^1} - |(v,\phi)|\| \phi \|_{H^1})^2 \ge b \left( \frac{1}{2} \| v \|_{H^1}^2 - |(v,\phi)|^2\| \phi \|_{H^1}^2 \right).
\end{align*}
where we used $(x-y)^2 \ge \frac{1}{2} x^2 - y^2$. The desired inequality follows for $\lambda:= \min\big(\frac{1}{b\|\phi\|^2_{H^1}}, \frac{b}8\big)$.

\bigskip

\nd \emph{Step 4.} We finally prove estimate \eqref{last_one}, with a similar strategy as \eqref{est:Lvv_H1}. By Step 2, we know that for every $v\in H^2(\RR^N)$ with $(v, \phi)_{L^2}=0$, 
\[ \|Lv\|_{L^2} \|v\|_{L^2}\ge (Lv,v)\ge a\|v\|_{L^2}^2 \]
yielding $\|Lv\|_{L^2}\ge a \|v\|_{L^2}$ with $a>0$. 

Let $\tilde b = \frac{a}{1+2a+2\|V \|_{L^\infty}} >0$, then, as in Step 3, we compute for $v\in H^2(\RR^N)$ such that $(v, \phi)_{L^2}=0$:
\begin{align*}
\|Lv\|^2_{L^2} & = 2 \tilde b \| \Delta v \|_{L^2}^2 + (1-2\tilde b) \| Lv \|_{L^2}^2 + 4 \tilde b (\Delta v,Vv) + 2 \tilde b \| V v \|_{L^2}^2 \\
& \ge \tilde b \| \Delta v \|_{L^2}^2 + a (1-2\tilde b) \| v \|_{L^2}^2 - 2 \tilde b \| V v \|_{L^2}^2 \\
& \ge \tilde b \| \Delta v \|_{L^2}^2 + (a (1-2\tilde b) -2\| V \|_{L^\infty} \tilde b)  \| v \|_{L^2}^2 \\
& \ge \tilde b (\| \Delta v \|_{L^2}^2 + \| v \|_{L^2}^2).
\end{align*}
because $(a (1-2\tilde b) -2\| V \|_{L^\infty} \tilde b) = \tilde b$. As $\| \Delta v \|_{L^2} + \| v \|_{L^2}$ controls the $\| v \|_{H^2}$ norm, we can conclude that for some $b'>0$,
\[ \| Lv \|_{L^2} \ge b' \| v \|_{H^2}. \]
For general $v \in H^2$ (without assuming that $(v, \phi)_{L^2}=0$), we set $w:=v- (v, \phi) \phi$. Then $w\in H^2$ (as $\phi\in H^2$) with $(w, \phi)_{L^2}=0$ and since $Lv=Lw$, the desired inequality \eqref{last_one} follows as in Step 3.
\end{proof}

\section{BV curves to \texorpdfstring{$\m S^2$}{S2}}

We claimed in the sketch of proof of Theorem \ref{th:lwp} that any $\m S^2$ valued map in $H^1([-A,A])$ can not have an image which is dense in any cap. Performing a change a chart, it suffices prove that a map in $H^1([0,1], [0,1]^2)$ does not have a dense image (in $[0,1]^2$). We actually provide a short quantitative proof in the slightly more general setting of $BV$ maps.

\begin{lem} \label{lem:bv_S2}
Let $m \in BV([0,1],[0,1]^2)$. Then  $m([0,1])$ is not dense in $[0,1]^2$.

More precisely, there exists a square $Q$ of length $\frac{1}{5(\| m \|_{BV}+1)}$ such that $Q \cap m([0,1]) = \varnothing$.
\end{lem}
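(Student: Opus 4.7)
The idea is a grid-counting argument. Set $L := \|m\|_{BV}$ and $\ell := 1/(5(L+1))$; the goal is to find a closed square of side $\ell$ in $[0, 1]^2$ disjoint from $\gamma := m([0, 1])$. Since $m$ may have jumps, I would first replace $\gamma$ by a connected rectifiable supercurve: using the cumulative variation $V(t) := V(m; [0, t])$, one builds a 1-Lipschitz reparametrization $\tilde m: [0, L] \to [0, 1]^2$ by "filling in" each jump with the line segment connecting its two endpoints (such a segment lies in $[0, 1]^2$ by convexity). The resulting $\tilde m$ is continuous, 1-Lipschitz, satisfies $V(\tilde m) = L$, and $\gamma \subset \tilde\gamma := \tilde m([0, L])$.

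For each $i \in \{1, 2\}$, the projection $\tilde m_i: [0, L] \to [0, 1]$ is 1-Lipschitz with $V(\tilde m_i) \le L$. Banach's indicatrix theorem then yields $\int_0^1 n_i(y) \, dy = V(\tilde m_i) \le L$, where $n_i(y) := \# \tilde m_i^{-1}(y)$. By Fubini, averaging over translations $a \in [0, \ell]$, $\int_0^\ell \sum_{k: a + k\ell \in (0,1)} n_i(a + k\ell) \, da \le L$, so there exists a shift $a_0 \in [0, \ell]$ with $\sum_k n_1(a_0 + k\ell) \le L/\ell$, and similarly $b_0 \in [0, \ell]$ for $\tilde m_2$ (further chosen generic enough that $\tilde\gamma$ does not run along any chosen grid line on a set of positive length). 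Consider the grid in $[0, 1]^2$ with vertical lines $x = a_0 + k\ell$ and horizontal lines $y = b_0 + k\ell$: it produces at least $(5L+3)^2$ full closed interior cells of side exactly $\ell$. Since $\tilde\gamma$ is continuous and connected, it can pass from one cell to another only by crossing a grid line, so the number of closed cells meeting $\tilde\gamma$ is bounded by the total number of grid-line crossings plus one, that is, at most $2L/\ell + 1 = 10L^2 + 10L + 1$.

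Since $(5L+3)^2 = 25L^2 + 30L + 9 > 10L^2 + 10L + 1$ for every $L \ge 0$, at least one full cell is disjoint from $\tilde\gamma \supset \gamma$, which is the desired square $Q$. The main obstacle is the rectification step: without filling in the jumps, a BV map could in principle teleport between distant cells without any grid-line crossing, invalidating the bound "number of cells visited $\le$ number of crossings $+ 1$". The rectification resolves this because the inserted line segments contribute exactly the jump sizes to the total variation (preserving $V(\tilde m) = L$), while only enlarging the image, so that any cell avoiding $\tilde\gamma$ immediately avoids $\gamma$.
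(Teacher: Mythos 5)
Your approach is correct in outline but takes a genuinely different, and heavier, route than the paper. The paper's proof is a bare-hands pigeonhole: partition $[0,1)^2$ into $n^2$ squares of side $1/n$, shrink each to the concentric sub-square of side $1/(3n)$, and observe that if every sub-square were hit by $m$ then reordering the hitting times and using the pairwise separation $\ge 2/(3n)$ gives $\|m\|_{BV}\ge (2n-1)/3$, a contradiction once $n$ is chosen of order $\|m\|_{BV}$. No rectification, no continuity, no Banach indicatrix---only the definition of the pointwise variation. Your argument reparametrises the curve by arc length, invokes Banach's indicatrix theorem on the coordinate projections, and exploits connectivity of the rectified curve to bound the number of grid cells visited by the number of grid-line crossings. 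This is conceptually pleasant (a coarea-type counting picture) and could in principle extend to higher dimensions, but it is substantially longer; both routes land on the same side length $1/(5(\|m\|_{BV}+1))$.

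Two points in your write-up need tightening before it closes. First, when filling a jump at $t_0$ you must route the inserted arc through $m(t_0)$ itself (segment from $m(t_0^-)$ to $m(t_0)$ then to $m(t_0^+)$): a single chord from $m(t_0^-)$ to $m(t_0^+)$ need not contain $m(t_0)$, and then $\gamma\not\subset\tilde\gamma$; the detour also keeps $V(\tilde m)=V(m)$. Second, and more seriously, the step ``closed cells met $\le$ grid-line crossings $+1$'' fails if $\tilde\gamma$ passes through a grid corner: a single preimage time there lies on the boundary of four closed cells and can add up to three new cells, inflating the count to roughly $3N+1$ with $N\le 2L/\ell$, i.e.\ about $30L^2+30L+1$, which already exceeds $(5L+3)^2=25L^2+30L+9$ once $L>\sqrt{8/5}$; as written the comparison does not close. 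The fix is the same genericity you invoke against running along a grid line: the set of shifts $(a_0,b_0)\in[0,\ell]^2$ for which $\tilde\gamma$ hits a corner is a countable union of curves (hence null), while the set of shifts achieving both indicatrix sums $\le L/\ell$ has positive measure because those sums are a.e.\ finite, integer-valued, and of mean $\le L/\ell$. After excluding corners, every grid-line time lies on exactly one line, on the boundary of exactly two closed cells, and the bound $\le N+1\le 2L/\ell+1$ is then correct. With these two repairs the argument is complete.
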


\begin{proof}
Let $n \ge 2$ be an integer to be fixed later, and consider a partition of $[0,1[^2$ into $n^2$ squares $(Q_j)_{1 \le j \le n^2}$ of length $1/n$, of the form $[\alpha /n, (\alpha+1) /n[ \times [\beta /n,  (\beta+1) /n[$ for $0 \le \alpha, \beta \le n-1$.

For all $1 \le j \le n^2$, denote $Q_j'$ the smaller square of length $1/(3n)$ which has the same center as $Q_j$: if $Q_j = [\alpha /n, (\alpha+1) /n[ \times [\beta /n,  (\beta+1) /n[$ then $Q_j' = [(\alpha+1/3) /n, (\alpha+2/3) /n[ \times [(\beta+1/3) /n,  (\beta+2/3) /n[$.

Assume that for all $1 \le j \le n^2$, $Q_j' \cap m([0,1]) \ne \varnothing$, and consider $y_j \in [0,1]$ such that $m(y_j) \in Q_j' \cap m([0,1])$, and $(x_j)_j$ is the reordering of the $(y_j)_j$ that is $0 \le x_1 < x_2 < \cdots < x_{n^2} \le 1$.

By construction, for any $1 \le  j <k \le n^2$, the distance between $Q_j'$ and $Q_k'$ is at least $2/(3n)$ so that $d(m(x_j), m(x_{j+1})) \ge 2/(3n)$. Therefore, by definition of the $BV$ norm,
\[ \| m \|_{BV}  \ge \sum_{j=1}^{n^2-1} d(m(x_{j}), m(x_{j+1})) \ge \sum_{j=1}^{n^2-1} \frac{2}{3n} \ge \frac{2}{3} \left( n - \frac{1}{n} \right) \ge \frac{2n-1}{3}. \]
(Recall $n \ge 2$). As a consequence, $\| m \|_{BV} \ge 1$, and for any integer $n > (3 \| m \|_{BV}+ 1)/2$, there exists $j$ such that the $Q_j'$ such that $Q_j' \cap m([0,1]) = \varnothing$. The conclusion follows by choosing the integer $n_0$ in the interval $((3 \| m \|_{BV}+ 1)/2, 3 (\| m \|_{BV}+ 1)/2]$ (notice that $n_0 \ge 3$). Then  $Q_j'$ has length 
\[ \frac{1}{3n_0} \ge \frac{2}{9(\| m \|_{BV}+ 1)} \ge \frac{1}{5(\| m \|_{BV}+1)}. \qedhere \]
\end{proof}

\nocite{GRST16}
\nocite{KMV19}

\bibliography{biblio_llg}

\begin{thebibliography}{10}

\bibitem{AS92}
F.~Alouges and A.~Soyeur.
\newblock On global weak solutions for {L}andau-{L}ifshitz equations.
\newblock {\em Nonlinear Analysis, Theory, Methods \& Applications},
  18:1071--1084, 1992.

\bibitem{BIK07}
I.~Bejenaru, A.~D. Ionescu, and C.~E. Kenig.
\newblock Global existence and uniqueness of {S}chr\"{o}dinger maps in
  dimensions {$d\geq 4$}.
\newblock {\em Adv. Math.}, 215(1):263--291, 2007.

\bibitem{BIKT11}
I.~Bejenaru, A.~D. Ionescu, C.~E. Kenig, and D.~Tataru.
\newblock Global {S}chr\"{o}dinger maps in dimensions {$d\geq 2$}: small data
  in the critical {S}obolev spaces.
\newblock {\em Ann. of Math. (2)}, 173(3):1443--1506, 2011.

\bibitem{Car10}
Gilles Carbou.
\newblock Stability of static walls for a three-dimensional model of
  ferromagnetic material.
\newblock {\em J. Math. Pures Appl. (9)}, 93(2):183--203, 2010.

\bibitem{Car14}
Gilles Carbou.
\newblock Metastability of wall configurations in ferromagnetic nanowires.
\newblock {\em SIAM J. Math. Anal.}, 46(1):45--95, 2014.

\bibitem{CL06a}
Gilles Carbou and St\'{e}phane Labb\'{e}.
\newblock Stability for static walls in ferromagnetic nanowires.
\newblock {\em Discrete Contin. Dyn. Syst. Ser. B}, 6(2):273--290, 2006.

\bibitem{CL06b}
Gilles Carbou and St\'{e}phane Labb\'{e}.
\newblock Stabilization of walls for nano-wires of finite length.
\newblock {\em ESAIM Control Optim. Calc. Var.}, 18(1):1--21, 2012.

\bibitem{CLT08}
Gilles Carbou, St\'{e}phane Labb\'{e}, and Emmanuel Tr\'{e}lat.
\newblock Control of travelling walls in a ferromagnetic nanowire.
\newblock {\em Discrete Contin. Dyn. Syst. Ser. S}, 1(1):51--59, 2008.

\bibitem{chang2008spectra}
Shu-Ming Chang, Stephen Gustafson, Kenji Nakanishi, and Tai-Peng Tsai.
\newblock Spectra of linearized operators for nls solitary waves.
\newblock {\em SIAM Journal on Mathematical Analysis}, 39(4):1070--1111, 2008.

\bibitem{DIO}
Lukas D\"{o}ring, Radu Ignat, and Felix Otto.
\newblock A reduced model for domain walls in soft ferromagnetic films at the
  cross-over from symmetric to asymmetric wall types.
\newblock {\em J. Eur. Math. Soc. (JEMS)}, 16(7):1377--1422, 2014.

\bibitem{GGRS11}
Yan Gou, Arseni Goussev, J.~M. Robbins, and Valeriy Slastikov.
\newblock Stability of precessing domain walls in ferromagnetic nanowires.
\newblock {\em Phys. Rev. B}, 84:104445, Sep 2011.

\bibitem{GRS10}
Arseni Goussev, J.~M. Robbins, and Valeriy Slastikov.
\newblock Domain-wall motion in ferromagnetic nanowires driven by arbitrary
  time-dependent fields: An exact result.
\newblock {\em Phys. Rev. Lett.}, 104:147202, Apr 2010.

\bibitem{GRST16}
Arseni Goussev, J.~M. Robbins, Valeriy Slastikov, and Oleg~A. Tretiakov.
\newblock {D}zyaloshinskii-{M}oriya domain walls in magnetic nanotubes.
\newblock {\em Phys. Rev. B}, 93:054418, Feb 2016.

\bibitem{GdL17}
Susana Guti\'{e}rrez and Andr\'{e} de~Laire.
\newblock The {C}auchy problem for the {L}andau-{L}ifshitz-{G}ilbert equation
  in {BMO} and self-similar solutions.
\newblock {\em Nonlinearity}, 32(7):2522--2563, 2019.

\bibitem{HS96}
P.~D. Hislop and I.~M. Sigal.
\newblock {\em Introduction to Spectral Theory: With Applications to
  Schrödinger Operators}.
\newblock Applied Mathematical Sciences 113. Springer-Verlag New York, 1
  edition, 1996.

\bibitem{Ign}
Radu Ignat.
\newblock A {$\Gamma$}-convergence result for {N}\'{e}el walls in
  micromagnetics.
\newblock {\em Calc. Var. Partial Differential Equations}, 36(2):285--316,
  2009.

\bibitem{IgUniq}
Radu Ignat.
\newblock Uniqueness result for a weighted pendulum equation modeling domain
  walls in notched ferromagnetic nanowires.
\newblock {\em arXiv:2112.13358, 2021}, 2021.

\bibitem{IgnMer}
Radu Ignat and Beno\^{\i}t Merlet.
\newblock Lower bound for the energy of {B}loch walls in micromagnetics.
\newblock {\em Arch. Ration. Mech. Anal.}, 199(2):369--406, 2011.

\bibitem{IM_ARMA}
Radu Ignat and Roger Moser.
\newblock Interaction energy of domain walls in a nonlocal {G}inzburg-{L}andau
  type model from micromagnetics.
\newblock {\em Arch. Ration. Mech. Anal.}, 221(1):419--485, 2016.

\bibitem{IM_JDE}
Radu Ignat and Roger Moser.
\newblock N\'{e}el walls with prescribed winding number and how a nonlocal term
  can change the energy landscape.
\newblock {\em J. Differential Equations}, 263(9):5846--5901, 2017.

\bibitem{IM_Adv}
Radu Ignat and Roger Moser.
\newblock Energy minimisers of prescribed winding number in an
  {$\mathbb{S}^1$}-valued nonlocal {A}llen-{C}ahn type model.
\newblock {\em Adv. Math.}, 357:106819, 45, 2019.

\bibitem{Jiz11}
Rida Jizzini.
\newblock Optimal stability criterion for a wall in a ferromagnetic wire in a
  magnetic field.
\newblock {\em J. Differential Equations}, 250(8):3349--3361, 2011.

\bibitem{KMV19}
Stavros Komineas, Christof Melcher, and Stephanos Venakides.
\newblock Traveling domain walls in chiral ferromagnets.
\newblock {\em Nonlinearity}, 32(7):2392--2412, 2019.

\bibitem{KK_these}
Katharina K\"uhn.
\newblock Reversal modes in magnetic nanowires.
\newblock {\em Ph.D. Thesis, Max-Planck-Institut f\"ur Mathematik, Germany},
  2007.

\bibitem{Kuh09}
Katharina K\"uhn.
\newblock Moving domain walls in magnetic nanowires.
\newblock {\em Annales de l'I.H.P. Analyse non lin\'eaire}, 26(4):1345--1360,
  2009.

\bibitem{LN84}
N.~Lakshmanan and K.~Nakamura.
\newblock Landau-lifshitz equation of ferromagnetism: Exact treatment of the
  gilbert damping.
\newblock {\em Physical Review Letters}, 53(26):2497--2499, 1984.

\bibitem{MM01a}
Yvan Martel and Frank Merle.
\newblock Asymptotic stability of solitons for subcritical generalized {K}d{V}
  equations.
\newblock {\em Arch. Ration. Mech. Anal.}, 157(3):219--254, 2001.

\bibitem{SW74}
N.~L. Schryer and L.~R. Walker.
\newblock The motion of 180° domain walls in uniform dc magnetic fields.
\newblock {\em Journal of Applied Physics}, 45(12):5406--5421, 1974.

\bibitem{SlaSon}
Valeriy~V. Slastikov and Charles Sonnenberg.
\newblock Reduced models for ferromagnetic nanowires.
\newblock {\em IMA J. Appl. Math.}, 77(2):220--235, 2012.

\bibitem{Tak11}
Keisuke Takasao.
\newblock {Stability of travelling wave solutions for the Landau-Lifshitz
  equation}.
\newblock {\em Hiroshima Mathematical Journal}, 41(3):367 -- 388, 2011.

\bibitem{Tsu08}
Masayoshi Tsutsumi.
\newblock On the cauchy problem for the noncompact
  {L}andau-{L}ifshitz-{G}ilbert equation.
\newblock {\em J. Math. Ana Appl.}, 344(1):157--174, 2008.

\bibitem{Wei86}
Michael~I. Weinstein.
\newblock Modulational stability of ground states of nonlinear
  {S}chr\"{o}dinger equations.
\newblock {\em SIAM J. Math. Anal.}, 16(3):472--491, 1985.

\end{thebibliography}
\bibliographystyle{plain}

\bigskip
\bigskip

\normalsize

\begin{center}

{\scshape Rapha\"el C\^ote}\\
{\footnotesize
Universit\'e de Strasbourg\\
CNRS, IRMA UMR 7501\\
F-67000 Strasbourg, France\\
\email{cote@math.unistra.fr}
}

\bigskip

{\scshape Radu Ignat}\\
{\footnotesize
Institut de Math\'ematiques de Toulouse \& Institut Universitaire de France \\
UMR 5219, Universit\'e de Toulouse, CNRS, UPS, IMT \\
F-31062 Toulouse Cedex 9, France\\
\email{radu.ignat@math.univ-toulouse.fr}
}

\end{center}

\end{document}